\UseRawInputEncoding
\documentclass[12pt,reqno]{amsart}

\makeatletter
\def\@settitle{\begin{center}%
  \baselineskip14\p@\relax
  \bfseries
  \@title
  \end{center}%
}

\makeatother

\usepackage{amsmath,amsfonts,amsthm,amsopn,amssymb,extarrows,multirow,footnote,float}
\usepackage{cite,marginnote}
\usepackage{booktabs}
\usepackage{bm}
\usepackage{color,enumitem,graphicx}
\usepackage[colorlinks=true,urlcolor=blue,
citecolor=blue,linkcolor=blue,linktocpage,pdfpagelabels,
bookmarksnumbered,bookmarksopen]{hyperref}
\usepackage[english]{babel}

\usepackage[left=2.9cm,right=2.9cm,top=2.8cm,bottom=2.8cm]{geometry}
\usepackage[hyperpageref]{backref}

\numberwithin{equation}{section}

\makeindex

\def\N{\mathbb{N}}

\newtheorem{theorem}{Theorem}[section]

\newtheorem{lemma}[theorem]{Lemma}
\newtheorem{corollary}[theorem]{Corollary}
\newtheorem{proposition}[theorem]{Proposition}
\newtheorem{remark}[theorem]{Remark}

\newcommand{\R}{\mathbb R}

\newcommand{\bt}{\begin{theorem}}
	\newcommand{\et}{\end{theorem}}
\newcommand{\bl}{\begin{lemma}}
	\newcommand{\el}{\end{lemma}}
	\newcommand{\ed}{\end{definition}}
\newcommand{\bc}{\begin{corollary}}
	\newcommand{\ec}{\end{corollary}}
\newcommand{\bp}{\begin{proof}}
	\newcommand{\ep}{\end{proof}}
\newcommand{\bx}{\begin{example}}
	\newcommand{\ex}{\end{example}}
\newcommand{\bi}{\begin{exercise}}
	\newcommand{\ei}{\end{exercise}}
\newcommand{\bo}{\begin{proposition}}
	\newcommand{\eo}{\end{proposition}}
\newcommand{\br}{\begin{remark}}
	\newcommand{\er}{\end{remark}}
\newcommand{\beq}{\begin{equation}}
	\newcommand{\eeq}{\end{equation}}
\newcommand{\ba}{\begin{align}}
	\newcommand{\ea}{\end{align}}
\newcommand{\bn}{\begin{enumerate}}
	\newcommand{\en}{\end{enumerate}}
\newcommand{\bg}{\begin{align*}}
	\newcommand{\bcs}{\begin{cases}}
		\newcommand{\ecs}{\end{cases}}

	\newcommand{\bean}{\begin{eqnarray*}}
		\newcommand{\eean}{\end{eqnarray*}}

	\def\N{\mathbb{N}}

	\def\R{\mathbb{R}}

\def\@setauthors{\begin{center}%
  \def\and{\unskip\\}%
  \baselineskip16\p@\relax
  \normalfont
  \@author
  \end{center}%
}
	\renewcommand{\abstractname}{\normalfont\bfseries Abstract}

	\title[NLS with partial confinement]{Liouville theorems and symmetry of positive solutions for partially confined nonlinear Schr\"odinger equations}
	
	\author[Z.~J.~Chen]{Zhijie Chen}
    \author[Y.~J.~Wang]{Youjun Wang}
	\author[J.~J.~Zhang]{Jianjun Zhang}
	\author[X.~X.~Zhong]{Xuexiu Zhong}

\address[Z.~J.~Chen]{\newline\indent Department of Mathematical Sciences \& Yau Mathematical Sciences Center
\newline\indent
Tsinghua University
\newline\indent
 Beijing, 100084, China}
\email{\href{mailto:zjchen2016@tsinghua.edu.cn}{zjchen2016@tsinghua.edu.cn}}

\address[Y.~J.~Wang]{\newline\indent Department of Mathematical Sciences
\newline\indent
South China University of Technology
\newline\indent
Guangzhou, 510640, China}
\email{\href{mailto:scyjwang@scut.edu.cn}{scyjwang@scut.edu.cn}}

\address[J.~J.~Zhang]{\newline\indent College of Mathematics and Statistics
\newline\indent
Chongqing Jiaotong University
\newline\indent
Chongqing, 400074, China}
\email{\href{mailto:zhangjianjun09@tsinghua.org.cn}{zhangjianjun09@tsinghua.org.cn}}

	\address[X.~X.~Zhong]{\newline\indent South China Research Center for Applied Mathematics and Interdisciplinary Studies \& School of Mathematical Sciences
		\newline\indent
		South China Normal University
		\newline\indent
		Guangzhou, 510631, China}
	 \email{\href{mailto:zhongxuexiu1989@163.com}{zhongxuexiu1989@163.com}}

\makeatletter
\@namedef{subjclassname@2020}{%
  \textup{2020} Mathematics Subject Classification}
\makeatother

\subjclass[2020]{35A15, 35J62, 35B53, 35P05}
\date{\today}
\keywords{Liouville theorem, zero-mass threshold, partial confinement, symmetry and monotonicity, spectral analysis.}

\begin{document}
\renewcommand{\abstractname}{\normalfont\bfseries Abstract}
\begin{abstract}
We study positive solutions of the partially confined stationary nonlinear Schr\"odinger equation
$$
 -\Delta u+|y|^2u+\lambda u=g(u),
 \quad (y,z)\in\mathbb{R}^d\times\mathbb{R}^{m},
 \quad 1\le d<N,\quad m:=N-d.
$$
The spectrum of $-\Delta + |y|^2$ is given by $[d,\infty)$. We prove that
 $\lambda\ge-d$ is necessary for positive solutions in the classes considered here,
 and  the threshold $\lambda=-d$ yields several Liouville type results.  In particular, for the pure-power equation
$$
 (-\Delta + |y|^2-d)u=u^p,
$$
we prove that any nonnegative $H^1(\mathbb{R}^N)$ weak solution is trivial whenever
$1\leq p\leq \max\{(N+2)/(N-2), m/(m-2)\}$ for $m\geq 3$ or $p\geq 1$ for $m=1,2$.  The proof combines a half-space oscillator gap, moving planes at the spectral threshold, a Picone inequality, a strict Gaussian second-moment inequality, and anisotropic Pohozaev identities.   For $\lambda>-d$, we establish the existence of positive solutions under some standard assumptions.  Furthermore, every positive solution decaying at infinity is radially symmetric and strictly decreasing in the confined variables and, up to one common translation, radially symmetric and strictly decreasing in the free variables.

\vskip 0.2in
Dedicated to our supervisor Prof. Wenming Zou on the occasion of his 60th birthday.
\end{abstract}

\maketitle

\section{Introduction}
\label{sec:introduction}

This paper is devoted to positive solutions of the stationary nonlinear Schr\"odinger equation with a partial harmonic confinement
\begin{equation}\label{eq:main}
 -\Delta u+|y|^2u+\lambda u=g(u),
 \qquad x=(y,z)\in\mathbb{R}^d\times\mathbb{R}^m,
 \qquad m:=N-d\geq1,
\end{equation}
where $1\leq d\leq N-1$.  We write
$$
 H:=-\Delta+|y|^2=H_y-\Delta_z,
 \qquad H_y:=-\Delta_y+|y|^2,
$$
and use the natural energy space
\begin{equation}\label{eq:20260518-0953}
 X:=\left\{u\in H^1(\mathbb{R}^N):
       \int_{\mathbb{R}^N}|y|^2u^2\,\mathrm{d}x<\infty\right\}.
\end{equation}
As usual, $2^*=2N/(N-2)$ if $N\geq3$ and $2^*=+\infty$ if $N=2$.

Equation \eqref{eq:main} arises from standing waves for nonlinear Schr\"odinger equations in anisotropic traps.  Partial confinement is particularly natural in models of elongated Bose--Einstein condensates, where the transverse variables are trapped while one or more longitudinal variables remain free; see, for instance, \cite{ACD,BBJV-2017}.  In contrast with the fully confined problem, the embedding of $X$ is not compact because translations in the $z$-variables leave both the quadratic form and the equation invariant.  This loss of compactness is one of the principal analytic features of the problem.

In the absence of partial trapping, the variational theory of bound states on
$\mathbb{R}^N$ has a long history and it seems impossible for us to list all the references.  For foundational existence and compactness
methods for scalar-field and nonlinear Schr\"odinger equations, see e.g.
 \cite{BerestyckiLions1983-I,Rabinowitz,CR,BW} and the references therein.  For the existence of multiple solutions and nodal
solutions, see e.g.
\cite{BerestyckiLions1983-II,BW1} and the references therein. For minimax methods for
indefinite Schr\"odinger functionals, see e.g. \cite{SW} and the references therein.  For
fixed-frequency problems with critical and mixed-power nonlinearities, see
e.g. \cite{WW}  and the references therein.  These works provide the broader variational background, but
they do not address the anisotropic translation defect or the threshold
$\lambda=-d$ created by partial confinement.

For the physically important case $(N,d)=(3,2)$ and pure-power nonlinearities, Bellazzini, Boussa\"id, Jeanjean and Visciglia \cite{BBJV-2017} established existence and stability results for normalized standing waves.  Their symmetry argument is tied to the variational character of ground states.  Jeanjean and Song \cite{JS} subsequently obtained a second normalized solution in the mass-supercritical regime.  More recently, Shan, Shuai and Wu \cite{ShanShuaiWu2025} obtained multiplicity results and developed a maximum-principle approach which yields the symmetry of positive solutions for \eqref{eq:main} when $\lambda\geq0$.  The interval
$$
 -d<\lambda<0
$$
is subtler: the zero-order part is not pointwise positive, and the standard weak maximum principle used for $\lambda\geq0$ no longer applies directly.  A second delicate regime is the threshold $\lambda=-d$, because $d$ is the bottom of the spectrum of $H$ and hence $H-d$ loses the global coercivity.

The symmetry analysis belongs to the classical moving-plane tradition
\cite{GidasNiNirenberg1979,Gidas-1980,LiNi1993}.  Partial confinement,
however, requires a replacement for the usual pointwise positivity of the
zero-order coefficient; this is precisely where the spectral lower bounds
developed below enter.

The purpose of the present paper is to give a unified treatment of these two threshold phenomena.  Besides correcting several functional-analytic points that are essential in a noncompact setting, we obtain several zero-mass Liouville type results.  The main ingredients are as follows.

\begin{itemize}
 \item We realize $H$ through its closed quadratic form, distinguish the form domain $Q(H)=X$ from the operator domain $D(H)$, and use the Hermite expansion in $y$ together with the Fourier transform in $z$ to prove that
 $$
   \sigma(H)=\sigma_{\rm ac}(H)=[d,\infty),
   \qquad \sigma_{\rm p}(H)=\sigma_{\rm sc}(H)=\varnothing.
 $$
 In particular, $\lambda\geq-d$ is necessary for positive solutions in the classes considered below.

 \item At the zero-mass threshold, the projection onto the oscillator ground state
 $$
   \phi_0(y)=\pi^{-d/4}e^{-|y|^2/2},
   \qquad H_y\phi_0=d\phi_0,
 $$
 reduces suitable inequalities for $H-d$ to Liouville inequalities for $-\Delta_z$.  This gives a family of free-dimensional Liouville type results.

 \item For the model equation
 \begin{equation}\label{eq:main-p}
   (H-d)u=u^p,
   \qquad 0\leq u\in H^1(\mathbb{R}^N),
 \end{equation}
 we prove a critical finite-energy Liouville theorem without any restriction $m=N-d$: if $N\geq3$ and
 $$
   1\leq p\leq p_c:=\frac{N+2}{N-2},
 $$
 then $u\equiv0$.  The proof is not a direct consequence of the ground-state projection.  It combines a Dirichlet half-space gap of size two for the harmonic oscillator, a moving-plane argument exactly at $\lambda=-d$, an annihilation-operator/Picone argument for the quotient $u/\phi_0$, a strict Gaussian second-moment inequality, and two anisotropic Pohozaev identities.

 \item For $\lambda>-d$, we give a mountain-pass proof under standard subcritical Ambrosetti--Rabinowitz assumptions.  The noncompactness in the free variables is handled by an anisotropic Lions-type nonvanishing lemma, and the passage to the nonlinear limit is carried out using local compactness and uniform integrability.

 \item Finally, for every $\lambda>-d$ we prove symmetry for arbitrary positive finite-energy solutions satisfying the stated decay assumption.  The key point in the negative spectral gap is to combine a small-negative-set estimate with the deformed oscillator lower bound
 $$
   \inf\sigma\bigl(-(1-\varepsilon)\Delta+|y|^2\bigr)
   =d\sqrt{1-\varepsilon}.
 $$
 This yields radial symmetry and strict radial monotonicity in the confined variables, and radial symmetry and strict radial monotonicity in the free variables up to a common translation.
\end{itemize}

We next state the assumptions and main results precisely.  In the alternative growth hypothesis used below, the dependence on the free variable must be locally uniform.  We therefore use the following condition.
\begin{enumerate}[label=$\mathbf{(A)}$]
\item \label{Assumption:A}
For every compact set $K\Subset\mathbb{R}^m$, there exist constants $C_K>0$ and $\kappa_K\in[0,1/2)$ such that
\begin{equation}\label{eq2-1}
 \sup_{z\in K}\bigl(
 |u(y,z)|+|\nabla_yu(y,z)|+|\Delta_yu(y,z)|\bigr)
 \leq C_Ke^{\kappa_K|y|^2},
 \qquad y\in\mathbb{R}^d.
\end{equation}
\end{enumerate}
The local uniformity in $z$ is what permits the Gaussian-weighted Fubini and limiting arguments used in Sections \ref{sec:Spectral} and \ref{sec:Liouville-results}.

\begin{theorem}[Threshold and Liouville results]\label{thm:nec-liou}
Let $g\in C([0,\infty),[0,\infty))$.
\begin{enumerate}[label=(\roman*)]
 \item \label{necessary}
 Suppose that \eqref{eq:main} has a positive classical solution $u\in C^2(\mathbb{R}^N)$ such that either $u\in X$ or $u$ satisfies \ref{Assumption:A}.  Then necessarily
 $$
   \lambda\geq-d.
 $$

 \item \label{zero-mass}
 Let $u\in C^2(\mathbb{R}^N)$ be nonnegative, assume either $u\in X$ or \ref{Assumption:A}, and suppose $\lambda=-d$.
 \begin{enumerate}[label=(ii-\arabic*)]
  \item \label{Liouville-1}
  If $(H-d)u\geq0$ and $u\in L^q(\mathbb{R}^N)$, where $q\geq1$ if $m\leq2$ and
  $$
    1\leq q\leq\frac{m}{m-2}\quad\text{if }m>2,
  $$
then $u\equiv0$.

  \item \label{Liouville-2}
  If $(H-d)u\geq u^p$, where $p\geq1$ if $m\leq2$ and
  $$
    1\leq p\leq\frac{m}{m-2}\quad\text{if }m>2,
  $$
then $u\equiv0$.
 \end{enumerate}

 \item \label{application}
 The following finite-energy consequences hold.
 \begin{enumerate}[label=(iii-\arabic*)]
  \item \label{Liouvilleiii-1}
  If $m\leq4$ and $0\leq u\in C^2(\mathbb{R}^N)\cap X$ solves
  $$
    (H-d)u=g(u),
  $$
  then $u\equiv0$.

  \item \label{Liouvilleiii-2}
  If $N\geq3$ and $1\leq p\leq (N+2)/(N-2)$, then the pure-power problem \eqref{eq:main-p} has only the trivial nonnegative $H^1(\mathbb{R}^N)$ weak solution.  When $N=2$, the same conclusion holds for every finite $p\geq1$.

  \item \label{Liouvilleiii-3}
  Let $m>2$ and $p>1$.  If $0\leq u\in C^2(\mathbb{R}^N)\cap X\cap L^{p+1}(\mathbb{R}^N)$ is a nontrivial solution of \eqref{eq:main-p}, then necessarily
  $$
    p<\frac{m+2}{m-2}.
  $$
 \end{enumerate}
\end{enumerate}
\end{theorem}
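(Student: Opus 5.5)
The common tool is projection onto the oscillator ground state. For a nonnegative $u$ in either class, set
$$
 w(z):=\int_{\mathbb{R}^d}u(y,z)\phi_0(y)\,\mathrm{d}y .
$$
Under \ref{Assumption:A} the integrand is dominated, locally uniformly in $z$, by $e^{(\kappa_K-1/2)|y|^2}$, which is integrable because $\kappa_K<1/2$. So we may differentiate under the integral sign and integrate by parts in $y$ with no boundary terms. Since $H_y\phi_0=d\phi_0$, this gives
$$
 \int (H-d-\mu)u\,\phi_0\,\mathrm{d}y=-\Delta_z w-\mu w .
$$
In the class $u\in X$ we do the same thing weakly, testing against $\phi_0(y)\psi(z)$ with $\psi\in C_c^\infty(\mathbb{R}^m)$. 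Hence every pointwise or weak differential inequality for $u$ passes to a differential inequality in the free variables alone.

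\textbf{Part (i).} Suppose $\lambda<-d$. Since $g\ge0$, the equation gives $-\Delta_z w+(\lambda+d)w\ge0$, that is, $-\Delta_z w\ge |\lambda+d|\,w$ with $w>0$. A positive supersolution of $-\Delta-c$ with $c>0$ cannot exist on all of $\mathbb{R}^m$. To see this, test on a ball $B_R$ against the first Dirichlet eigenfunction $\varphi_R$ of $-\Delta_z$. Green's identity gives
$$
 \bigl(\mu_1(B_R)-c\bigr)\int_{B_R} w\varphi_R\ \ge\ -\int_{\partial B_R} w\,\partial_\nu\varphi_R\ >0 .
$$
Choosing $R$ so large that $\mu_1(B_R)<c$ makes the left side negative, a contradiction. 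This argument is the same in both classes once $w$ is defined.

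\textbf{Part (ii).} With $\lambda=-d$, (ii-1) gives $-\Delta_z w\ge0$ with $w\ge0$. The point is to show that $w\in L^q(\mathbb{R}^m)$. By H\"older in $y$, $w(z)\le \|u(\cdot,z)\|_{L^q}\|\phi_0\|_{L^{q'}}$, and then Minkowski or Fubini gives $w\in L^q(\mathbb{R}^m)$. The classical Liouville theorem for nonnegative superharmonic functions then applies. For $m\le2$ every such function is constant, and for $m>2$ one has $w\ge c|z|^{2-m}$ at infinity, which is not in $L^q$ when $q\le m/(m-2)$. So $w\equiv0$, and since $\phi_0>0$ this forces $u\equiv0$.

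For (ii-2), Jensen's inequality with the probability weight $\phi_0/\|\phi_0\|_{L^1}$ and $p\ge1$ gives
$$
 -\Delta_z w\ \ge\ \int u^p\phi_0\ \ge\ c\,w^p .
$$
The Gidas--Serrin/Mitidieri--Pohozaev Liouville theorem for $-\Delta w\ge w^p$ on $\mathbb{R}^m$ then applies, with the range $p\le m/(m-2)$ for $m>2$ and any $p$ for $m\le2$, since there $w$ is superharmonic and positive, hence constant, hence zero.

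\textbf{Part (iii).} For (iii-1), $u\in X$ gives $u\in L^2$. Since $g\ge0$ we have $(H-d)u\ge0$, and $q=2\le m/(m-2)$ exactly when $m\le4$, so (ii-1) applies.

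For (iii-3), I would use the two anisotropic Pohozaev identities, obtained by multiplying the equation by $z\cdot\nabla_z u$ and by $y\cdot\nabla_y u$, together with the energy identity. The $z$-dilation identity reads
$$
 \frac{m-2}{2}\int|\nabla_z u|^2-\frac{m}{2}\int\bigl(|\nabla_y u|^2+|y|^2u^2-du^2\bigr)+\frac{m}{p+1}\int u^{p+1}=0 .
$$
Combining it with $\int(|\nabla u|^2+|y|^2u^2-du^2)=\int u^{p+1}$ and with the nonnegativity of the transverse form $\int(|\nabla_y u|^2+|y|^2u^2-du^2)\ge0$ (the oscillator gap) yields $\bigl(\frac{m}{p+1}-\frac{m-2}{2}\bigr)\int u^{p+1}\ge0$ or its strict version. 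Strictness needs the strict Gaussian second-moment inequality, i.e.\ that $u(\cdot,z)$ is not a multiple of $\phi_0$ for a nontrivial solution; then $p<(m+2)/(m-2)$. Justifying the identities requires truncation, using the $X$ and $L^{p+1}$ integrability.

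\textbf{Main obstacle: (iii-2).} This is the critical range $p\le(N+2)/(N-2)$, which the projection alone does not reach. I would follow the ingredients listed in the introduction. First, run moving planes in the $y$-variables exactly at $\lambda=-d$. The comparison function $u_\mu-u$ lives on a half-space in $y$, where the Dirichlet oscillator has spectral gap $d+2$ rather than $d$; this absorbs the linearized term and gives radial symmetry and monotonicity in $y$. Second, write $u=\phi_0 v$. Radial decrease, combined with the annihilation-operator/Picone identity $\int|\nabla_y u|^2+|y|^2u^2-du^2=\int\phi_0^2|\nabla_y v|^2$, controls the transverse form by the second moment. Third, the $y$-Pohozaev identity gives a relation that the strict Gaussian second-moment inequality makes incompatible with the $z$-identity unless $p>(N+2)/(N-2)$. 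I expect the delicate step to be this final combination of the identities, and in particular keeping the inequality strict at $p=p_c$. For $N=2$ all $p$ are subcritical and the same argument applies.
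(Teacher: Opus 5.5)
\textbf{The gap is in (iii-2).} Your arguments for (i), (ii) and (iii-1) are the paper's ground-state projection. In (ii-2) you reduce to $-\Delta_z w\geq c\,w^p$ via Jensen and quote the Gidas/Mitidieri--Pohozaev theorem, while the paper runs the rescaled test-function estimate directly with weight $\phi_0$; either works.

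The genuine gap is the second step of your plan for (iii-2). Moving planes only give that $u$ is radially decreasing in $y$. The ground-state substitution $\int(|\nabla_yu|^2+|y|^2u^2-du^2)\,\mathrm{d}y=\int\phi_0^2|\nabla_y(u/\phi_0)|^2\,\mathrm{d}y$ only restates $Q_y\geq0$. Neither fact implies the strict Gaussian second-moment inequality $2B<dM$ that your third step uses. For instance, $u=e^{-|y|^2/4}$ is radially decreasing yet $\int|y|^2u^2\,\mathrm{d}y=d\int u^2\,\mathrm{d}y>\frac d2\int u^2\,\mathrm{d}y$.

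What is needed is monotonicity \emph{relative to} $\phi_0$, i.e.\ $\partial_r(u/\phi_0)<0$. Then $2B<dM$ follows from the covariance identity for $\mathrm{d}\gamma=\phi_0^2\,\mathrm{d}y$, since $|y|^2$ increases while $(u/\phi_0)^2$ strictly decreases. The paper proves this relative monotonicity as follows.
\begin{itemize}
\item With $A_i=\partial_{y_i}+y_i$ one has $A_iu=\phi_0\,\partial_{y_i}(u/\phi_0)$.
\item The commutator $[H-d,A_i]=-2A_i$ gives $(\mathcal L+2)A_iu=-(p-1)y_iu^p<0$ on $\Omega_i=\{y_i>0\}$, where $\mathcal L=-\Delta+|y|^2-d-pu^{p-1}$. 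Also $A_iu=0$ on $\partial\Omega_i$ by evenness.
\item Using the moving-plane output a second time, $-\partial_{y_i}u>0$ in $\Omega_i$ solves $\mathcal L(-\partial_{y_i}u)=2y_iu>0$. The Picone (Allegretto--Piepenbrink) identity then makes the form of $\mathcal L$ nonnegative on $H_0^1(\Omega_i)\cap L^2(\Omega_i,|y|^2\mathrm{d}x)$.
\item Testing with $(A_iu)^+$ yields $A_iu\leq0$, and the strong maximum principle makes this strict.
\end{itemize}
Once this is in place, the Pohozaev step is pure algebra, $2B-dM=\frac{N+2-(N-2)p}{m(p-1)}A_z\geq0$, and strictness at $p=p_c$ comes for free from $2B<dM$. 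So the delicate point is the relative monotonicity, not the combination of identities you flagged.

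Your plan also omits two things. First, an $H^1$ weak solution must be upgraded before anything else: Brezis--Kato for $L^\infty$ bounds and decay, and $u\in D(H)$ with $D^2u,\ |y|\nabla u,\ |y|^2u\in L^2$. This is needed both to start the plane and to make $A_iu$ admissible as a test function. Second, the case $p=1$ needs separate treatment, since the identity above divides by $p-1$; the paper uses the Hermite--Fourier decomposition to rule out an eigenvalue.

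\textbf{Issues in (iii-3).} Your displayed $z$-Pohozaev identity has the wrong sign on the $A_z$ term. The correct form is $\frac{m-2}{2}A_z+\frac m2Q_y=\frac m{p+1}C$ with $Q_y:=A_y+B-dM$. Combined with $A_z+Q_y=C$, this gives $Q_y=\bigl(\frac m{p+1}-\frac{m-2}{2}\bigr)C$, which is the conclusion you state.

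At the endpoint $p=\frac{m+2}{m-2}$ you cannot invoke the strict Gaussian moment inequality. It rests on regularity, decay and moving planes that require $p\leq p_c$, and $p_c<\frac{m+2}{m-2}$. It is also not needed. Equality forces $Q_y=0$, hence $u=\phi_0(y)v(z)$ because $\ker(H_y-d)=\mathrm{span}\{\phi_0\}$. Test the equation with $(\phi_0^p-c_p\phi_0)(y)\eta(z)$, where $c_p=\int\phi_0^{p+1}\,\mathrm{d}y$. This gives $0=\bigl(\int\phi_0^{2p}\,\mathrm{d}y-c_p^2\bigr)\int v^p\eta\,\mathrm{d}z$. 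The right side is positive for suitable $\eta\geq0$ by strict Cauchy--Schwarz, a contradiction. You identified the right condition (no product form) but did not prove it.
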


For $\lambda>-d$, we use the following standard hypotheses:
\begin{enumerate}[label=$\mathbf{(G{\arabic*})}$]
 \item \label{gcon-1}
 $g\not\equiv0$, $\displaystyle\lim_{s\rightarrow0^+}g(s)/s=0$, and there exists $q\in(2,2^*)$ such that
 $$
   \limsup_{s\rightarrow\infty}\frac{g(s)}{s^{q-1}}<\infty.
 $$
 \item \label{gcon-2}
 There exists $\theta>2$ such that
 $$
   0\leq\theta G(s)\leq g(s)s,
   \qquad G(s):=\int_0^s g(t)\,\mathrm{d}t.
 $$
\end{enumerate}

\begin{theorem}[Existence]\label{thm:existence}
Assume \ref{gcon-1}--\ref{gcon-2}.  Then, for every $\lambda>-d$, equation \eqref{eq:main} possesses a positive solution
$$
 u\in X\cap L^\infty(\mathbb{R}^N)\cap C^{1,\alpha}_{\rm loc}(\mathbb{R}^N)
 \quad\text{for every }\alpha\in(0,1),
$$
satisfying $u(x)\rightarrow0$ as $|x|\rightarrow\infty$.  If, in addition, $g\in C^1$, then
$u\in C^{2,\alpha}_{\rm loc}(\mathbb{R}^N)$ for every $\alpha\in(0,1)$.
\end{theorem}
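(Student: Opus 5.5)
The plan is a standard mountain-pass argument with compactness recovered by $z$-translations. First I will truncate the nonlinearity by setting $g(s)=0$ for $s\le0$, and work with the functional
$$
 I(u)=\frac12\int_{\mathbb{R}^N}\bigl(|\nabla u|^2+|y|^2u^2+\lambda u^2\bigr)\,\mathrm{d}x-\int_{\mathbb{R}^N}G(u)\,\mathrm{d}x
$$
on $X$. The first step is coercivity of the quadratic part for $\lambda>-d$. Since $\inf\sigma(H)=d$ (from the spectral description announced in the introduction), we have $\int(|\nabla u|^2+|y|^2u^2)\ge d\|u\|_2^2$. Hence for $\lambda\in(-d,0)$ a convex combination with the trivial bound gives
$$
 \int\bigl(|\nabla u|^2+|y|^2u^2+\lambda u^2\bigr)\ge c_\lambda\int\bigl(|\nabla u|^2+|y|^2u^2+u^2\bigr),
$$
so the form defines a norm equivalent to that of $X$. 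From \ref{gcon-1} and Sobolev we get $I(u)\ge c\|u\|^2-C\|u\|^q$ near $0$. From \ref{gcon-2}, $G(s)\ge cs^\theta-C$ for large $s$, so $I(tu)\to-\infty$ along rays. This gives the mountain-pass geometry with level $c>0$ and, via Ekeland or the deformation lemma, a Cerami/PS sequence $(u_n)$. The Ambrosetti--Rabinowitz condition makes $(u_n)$ bounded in $X$, since $I(u_n)-\theta^{-1}I'(u_n)u_n\ge(\frac12-\frac1\theta)c_\lambda\|u_n\|^2$.

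The main obstacle is non-vanishing, because the embedding is not compact in $z$. Here I will use an anisotropic Lions lemma, in the following form. If $\sup_{z_0\in\mathbb{R}^m}\int_{\mathbb{R}^d\times B_1(z_0)}u_n^2\to0$ and $(u_n)$ is bounded in $X$, then $u_n\to0$ in $L^s$ for $2<s<2^*$. The proof covers $\mathbb{R}^N$ by cubes and controls the region $|y|>R$ by $R^{-2}\int|y|^2u^2$. On the bounded-$y$ slabs one applies the usual Lions interpolation on unit cubes. Vanishing would then force $\int g(u_n)u_n\to0$ and hence $\|u_n\|\to0$, contradicting $c>0$. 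So there are $z_n$ with $\int_{\mathbb{R}^d\times B_1(z_n)}u_n^2\ge\delta$. Translating $v_n=u_n(\cdot,\cdot+z_n)$ leaves $I$ and $I'$ invariant, and $v_n\rightharpoonup v\ne0$ in $X$. The confining weight gives compactness of $X\hookrightarrow L^2(\mathbb{R}^d\times B_1)$, which is what ensures $v\ne0$. To pass to the limit in $I'(v_n)\varphi$ for $\varphi\in C_c^\infty$, I will use local $L^s$ compactness and uniform integrability of $g(v_n)$ via the growth bound; this yields $I'(v)=0$.

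For positivity, testing with $v^-$ gives $v\ge0$. Local boundedness of $g(v)/(1+v)$ plus a Moser iteration (or Brezis--Kato) gives $v\in L^\infty$ and $v\in W^{2,s}_{\rm loc}$ for all $s$. It follows that $v\in C^{1,\alpha}_{\rm loc}$, with $C^{2,\alpha}_{\rm loc}$ by Schauder if $g\in C^1$. The strong maximum principle applied to $-\Delta v+(|y|^2+\lambda)^+v\ge0$ locally gives $v>0$. Decay at infinity follows from uniform local estimates: $\sup_{B_1(x)}v\le C\|v\|_{L^2(B_2(x))}\to0$ as $|x|\to\infty$, using the uniform $L^\infty$ bound and subcritical growth.
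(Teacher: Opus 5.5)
Your proposal is correct and follows essentially the paper's route: mountain-pass geometry with Ambrosetti--Rabinowitz boundedness, an anisotropic Lions lemma, $z$-translations, the $|y|^2$-weight plus Rellich compactness to obtain a nonzero weak limit, Vitali for the nonlinear term, and then Brezis--Kato, the local mean-value estimate for decay, $W^{2,s}$/Schauder regularity and the strong maximum principle. The differences are only cosmetic. You obtain $v\ge0$ by testing the limit with $v^-$ rather than replacing the Palais--Smale sequence by its positive part, and you apply the maximum principle with $(|y|^2+\lambda)^+$ using $g\ge0$; note, however, that the Brezis--Kato step really uses $g(v)/v\le\varepsilon+Cv^{q-2}\in L^{N/2}+L^\infty$ (after discarding $|y|^2v$ in the subsolution inequality), not an a priori local bound on $g(v)/(1+v)$.
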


\begin{theorem}[Symmetry]\label{thm:main-symmetry}
Assume $\lambda>-d$ and $g\in C^1([0,\infty),[0,\infty))$ with $g'(0)=0$.  Let
$u\in C^2(\mathbb{R}^N)\cap X$ be a positive solution of \eqref{eq:main} such that $u(x)\rightarrow0$ as $|x|\rightarrow\infty$.  Then there exist $z_0\in\mathbb{R}^m$ and a function $U=U(r,s)$ such that
$$
 u(y,z)=U(|y|,|z-z_0|).
$$
Moreover,
$$
 \partial_rU(r,s)<0\quad(r>0),
 \qquad
 \partial_sU(r,s)<0\quad(s>0).
$$
In particular, $u$ is radially symmetric about the origin in the confined variables and, up to a translation, radially symmetric in the free variables.
\end{theorem}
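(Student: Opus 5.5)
We prove Theorem~\ref{thm:main-symmetry} by the method of moving planes, carried out separately in each confined direction and in each free direction. Throughout we write
$$
c(x):=\int_0^1 g'\bigl(tu+(1-t)u_\mu\bigr)\,\mathrm{d}t ,
$$
the linearization coefficient that arises when the equation for $u$ is compared with the equation for a reflected copy $u_\mu$.

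\textbf{Setup.} Fix a unit vector $e$, either in $\mathbb{R}^d\times\{0\}$ or in $\{0\}\times\mathbb{R}^m$. For $\mu\in\mathbb{R}$ let $T_\mu=\{x\cdot e=\mu\}$, let $x^\mu$ be the reflection of $x$ across $T_\mu$, and set $\Sigma_\mu=\{x\cdot e>\mu\}$ and $w_\mu=u_\mu-u$, where $u_\mu(x)=u(x^\mu)$.

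\begin{itemize}
\item For $e$ in the $z$-directions the potential $|y|^2$ is invariant under the reflection, so
$$
(H+\lambda)w_\mu=c(x)\,w_\mu .
$$
\item For $e$ in the $y$-directions and $\mu\geq 0$, we have $|y^\mu|^2\geq|y|^2$ on the relevant side. The extra term therefore has a favorable sign and gives an inequality for $w_\mu$.
\end{itemize}

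\textbf{The key estimate (negative part of $w_\mu$).} We test the (in)equality with $w_\mu^-$ restricted to $\Sigma_\mu$. This is admissible because $u\in X$ and $u\to 0$ at infinity, and the boundary terms vanish on $T_\mu$. We obtain
$$
\int_{\Sigma_\mu}|\nabla w_\mu^-|^2+|y|^2(w_\mu^-)^2+\lambda(w_\mu^-)^2\leq\int_{\Sigma_\mu} c\,(w_\mu^-)^2 .
$$
Since $g'(0)=0$ and $u,u_\mu$ are small on $\mathrm{supp}\,w_\mu^-$ away from a compact set, the coefficient $c$ is small there. This is the small-negative-set estimate.

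The difficulty is the regime $-d<\lambda<0$. Here the left-hand side is not controlled by $\|w^-_\mu\|_2^2$ pointwise. We split
$$
|\nabla w|^2=\varepsilon|\nabla w|^2+(1-\varepsilon)|\nabla w|^2
$$
and invoke the deformed oscillator bound
$$
\inf\sigma\bigl(-(1-\varepsilon)\Delta+|y|^2\bigr)=d\sqrt{1-\varepsilon}.
$$
The same bound holds on a half-space $\Sigma_\mu$ by extending $w_\mu^-$ by zero. This gives
$$
\bigl(d\sqrt{1-\varepsilon}+\lambda\bigr)\|w_\mu^-\|_2^2\leq\int_{\Sigma_\mu} c\,(w_\mu^-)^2 .
$$
Choose $\varepsilon$ so small that $d\sqrt{1-\varepsilon}+\lambda>0$. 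If $\mathrm{supp}\,w_\mu^-$ avoids a large ball, then $c$ is below this constant there, which forces $w_\mu^-\equiv 0$.

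\textbf{Starting and moving the plane.} We start with $\mu$ large, so that $\Sigma_\mu$ lies outside a large ball; the key estimate then gives $w_\mu\ge0$. We decrease $\mu$ to
$$
\mu_0=\inf\{\mu:\ w_\nu\geq 0\text{ in }\Sigma_\nu\ \text{for all }\nu\geq\mu\}.
$$
Suppose $\mu_0>0$ in a $y$-direction, or suppose $w_{\mu_0}\not\equiv 0$. The strong maximum principle gives $w_{\mu_0}>0$ in $\Sigma_{\mu_0}$, since the zero-order term is bounded and the sign of the coefficient is irrelevant for the strong maximum principle with nonnegative $w$. For $\mu$ slightly below $\mu_0$, the set $\mathrm{supp}\,w_\mu^-$ then splits into two parts:
\begin{itemize}
\item a part outside a large ball $B_R$, handled by the key estimate;
\item a part inside $B_R\cap\Sigma_\mu$ of small measure, by continuity and compactness.
\end{itemize}
Combine the spectral bound with a Poincar\'e/Sobolev-type inequality on small sets, using $\|c\|_\infty<\infty$. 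This yields $w_\mu^-=0$ and contradicts the definition of $\mu_0$. This step is the main obstacle: we must make the small-measure estimate and the spectral estimate work together in one inequality. We will use
$$
\int c\,(w^-)^2\leq \|c\|_\infty\,|\Omega|^{2/N}\|w^-\|_{2^*}^2\leq C|\Omega|^{2/N}\int\varepsilon|\nabla w^-|^2
$$
on the compact part, and let the $(1-\varepsilon)$ part absorb the rest.

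\textbf{Conclusion.} In each $y$-direction, apply the argument to both $e$ and $-e$. This gives $\mu_0=0$ and symmetry about $y_i=0$, so $u$ is radial in $y$. Strict monotonicity follows from $w_\mu>0$ for $\mu>0$ and the Hopf lemma, which gives $\partial_{y_i}u<0$ for $y_i>0$.

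In each $z$-direction we obtain a symmetry hyperplane $z\cdot e=\mu_0(e)$. Applying this to all directions $e$, the corresponding planes must all pass through a common point $z_0$, by the standard argument that symmetry in coordinate directions plus rotations forces a common center. Hence $u$ is radial in $z-z_0$ and strictly decreasing in $|z-z_0|$. Since the reflections in $y$ and $z$ commute, we conclude that
$$
u(y,z)=U(|y|,|z-z_0|),
$$
with $\partial_rU<0$ for $r>0$ and $\partial_sU<0$ for $s>0$.
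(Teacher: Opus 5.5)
\textbf{Overall.} Your strategy matches the paper's proof. You start the plane far out using $g'(0)=0$ and $u\to0$. You split off $\varepsilon\|\nabla w_\mu^-\|_2^2$ via $\inf\sigma(-(1-\varepsilon)\Delta+|y|^2)=d\sqrt{1-\varepsilon}$, so that it can absorb the bounded coefficient on the small negative set inside $B_R$. You then stop at the origin in the $y$-directions and at a symmetry plane in the $z$-directions. Your half-spaces are mirror images of the paper's, which is immaterial. The proposal does, however, leave out steps that the theorem's conclusion depends on.

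\textbf{Main gap: the free directions.} Since $|y|^2$ is invariant under reflections in $z$, nothing you wrote excludes $w_\mu\equiv0$ for $\mu$ beyond the limiting position $\mu_0(e)$. Without $w_\mu\not\equiv0$ you cannot apply the strong maximum principle and Hopf, so you do not get $\partial_sU<0$. The missing observation is this: $w_{\mu_0(e)}\equiv0$ already holds, so $w_\mu\equiv0$ for some $\mu\ne\mu_0(e)$ would make $u$ symmetric about two distinct parallel hyperplanes. Then $u$ would be periodic along $e$, which is incompatible with $u>0$ and $u\to0$. A similar decay argument shows $\mu_0(e)$ is finite; otherwise $u$ would be monotone along every line parallel to $e$.

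\textbf{Common center.} ``Symmetry in coordinate directions plus rotations'' is not an argument here, since $u$ is not known to be rotation invariant in $z$. The paper sets $z_0:=\sum_j\mu_0(e_j)e_j$. It uses strict monotonicity along $e_1,\dots,e_m$ to show that $z_0$ is the unique maximizer of $z\mapsto u(y,z)$. Since reflection in $P_e=\{z\cdot e=\mu_0(e)\}$ preserves $u$, it follows that $z_0\in P_e$ for every $e$. An alternative that avoids strictness: the reflections in $P_{e_1},\dots,P_{e_m}$ compose to $z\mapsto 2z_0-z$. If $z_0\notin P_e$, conjugating the reflection in $P_e$ by this map gives the reflection in a distinct parallel plane. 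Composing the two yields a nontrivial $z$-translation preserving $u$, again impossible.

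\textbf{The case $N=2$.} Your absorption estimate
$\int_\Omega c(w^-)^2\le\|c\|_\infty|\Omega|^{2/N}\|w^-\|_{2^*}^2\le C|\Omega|^{2/N}\|\nabla w^-\|_2^2$
fails for $N=2$, which is allowed ($d=m=1$) and has $2^*=\infty$. There you need H\"older with a finite $r>2$ and $H^1(\mathbb{R}^2)\hookrightarrow L^r$. This produces $C|\Omega|^{1-2/r}(\|\nabla w^-\|_2^2+\|w^-\|_2^2)$. To absorb the $L^2$ part you must reserve some of the gap, choosing $\varepsilon$ with $d\sqrt{1-\varepsilon}+\lambda-\varepsilon>0$, as the paper does.

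\textbf{Sign slip in the $y$-directions.} On $\Sigma_\mu=\{y\cdot e>\mu\}$ with $\mu\ge0$ one has $|y|^2-|y^\mu|^2=4\mu(y\cdot e-\mu)\ge0$, i.e.\ $|y|^2\ge|y^\mu|^2$, not the reverse as you wrote. This sign is what makes $(|y|^2-|y^\mu|^2)u_\mu\ge0$ a favorable right-hand side. Its strictness for $\mu>0$ also gives $w_\mu\not\equiv0$ in the $y$-directions. You need that fact, but did not state it, before invoking the strong maximum principle at $\mu_0>0$ and Hopf for $\partial_rU<0$.
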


Of course, one may give some assumptions on $g(u)$ weaker than \ref{gcon-1}-\ref{gcon-2} (for example, $g(u)$ might be of critical growth) to guarantee the existence of positive solutions. In this paper, we state Theorem \ref{thm:existence}, as a supplement of Theorem \ref{thm:nec-liou}, just to indicate that $\lambda>-d$ is sufficient for the existence of positive solutions. Therefore, we do not want to perform an in-depth investigation of the conditions on $g(u)$ here.

Theorem \ref{thm:nec-liou}\,(iii-2) settles the critical \emph{nonexistence direction} of the pure-power zero-mass problem, but it does not by itself identify a sharp existence threshold.  Indeed, when $m\geq5$, the two lower nonexistence exponents satisfy
\begin{equation}\label{eq:intro-ordering}
 \frac{N+2}{N-2}\geq\frac{m}{m-2}
 \quad\Longleftrightarrow\quad m\geq d+2,
\end{equation}
while Theorem \ref{thm:nec-liou}\,(iii-3) gives the upper obstruction $p<(m+2)/(m-2)$ for every nontrivial finite-energy solution.  Thus the remaining possible interval is contained in
\begin{equation}\label{eq:intro-window}
 \max\left\{\frac{m}{m-2},\frac{N+2}{N-2}\right\}
 <p<\frac{m+2}{m-2}.
\end{equation}
No existence assertion in \eqref{eq:intro-window} is made here.

\begin{remark}[Remaining questions]\label{remark:open-problems}
The preceding results leave several natural problems.
\begin{itemize}
 \item For general nonlinearities $g(s)=o(s)$ at zero, determine whether the zero-mass equation $(H-d)u=g(u)$ can possess positive finite-energy solutions and, if so, whether all such solutions inherit the same symmetry as in Theorem \ref{thm:main-symmetry}.
 \item For the pure-power zero-mass problem with $m\geq5$, determine whether nontrivial finite-energy solutions exist anywhere in the interval \eqref{eq:intro-window}; this is the remaining sharpness problem after Theorem \ref{thm:nec-liou}\,(iii-2).
 \item For $\lambda>-d$ and pure powers, study uniqueness and nondegeneracy of positive solutions modulo translations in the free variables.
\end{itemize}
\end{remark}

The paper is organized as follows.  Section \ref{sec:Spectral} gives the form realization and the Hermite--Fourier spectral decomposition, and proves Theorem \ref{thm:nec-liou}\,(i).  Section \ref{sec:Liouville-results} proves the projected Liouville theorems and the critical finite-energy zero-mass theorem, including the half-space gap, annihilation-operator inequality and anisotropic Pohozaev identities.  Section \ref{sec:existence} establishes Theorem \ref{thm:existence} with a complete concentration-compactness argument.  Section \ref{sec:symmetry} proves Theorem \ref{thm:main-symmetry}.

\noindent\textbf{Notation.}
For $u\in H^1$, write $u^+=\max\{u,0\}$ and $u^-=\min\{u,0\}$.  The letter $C$ denotes a positive constant whose value may change from line to line.  All inequalities involving $H$ are understood through its closed quadratic form unless an operator-domain statement is explicitly made.

\section{\texorpdfstring{Spectral theory and the threshold $\lambda=-d$}{Spectral theory and the threshold lambda=-d}}\label{sec:Spectral}

We begin by fixing the functional-analytic realization of the partially confined Schr\"odinger operator.  Consider the densely defined closed quadratic form
\begin{equation}\label{eq:spectral-form}
 \mathfrak{h}[u,v]
 :=\int_{\mathbb{R}^N}\bigl(\nabla u\cdot\nabla\overline v+|y|^2u\overline v\bigr)\,\mathrm{d}x,
 \qquad Q(\mathfrak{h})=X.
\end{equation}
The self-adjoint operator associated with \eqref{eq:spectral-form}, again denoted by $H$, has domain
\begin{equation}\label{eq:operator-domain-H}
 D(H)=\left\{u\in X:
 -\Delta u+|y|^2u\in L^2(\mathbb{R}^N)\ \text{in }\mathcal D'(\mathbb{R}^N)\right\}.
\end{equation}
This is the standard realization furnished by the first representation theorem;
see, for example, \cite[Chapter VIII]{Reed-Simon-1980}.
Thus $Q(H)=X$ is the form domain, whereas $D(H)$ is the operator domain.  The distinction will be used repeatedly below.

Let $\{h_k\}_{k\geq0}$ be the normalized one-dimensional Hermite functions,
$$
 \left(-\frac{\mathrm{d}^2}{\mathrm{d}t^2}+t^2\right)h_k=(2k+1)h_k.
$$
For a multi-index $\alpha=(\alpha_1,\ldots,\alpha_d)\in\mathbb N_0^d$, put
$$
 \phi_\alpha(y):=\prod_{j=1}^d h_{\alpha_j}(y_j),
 \qquad |\alpha|:=\alpha_1+\cdots+\alpha_d.
$$
Then $\{\phi_\alpha\}_{\alpha\in\mathbb N_0^d}$ is an orthonormal basis of $L^2(\mathbb{R}^d)$ and
\begin{equation}\label{eq:Hermite-eigenvalues}
 H_y\phi_\alpha=(d+2|\alpha|)\phi_\alpha,
 \qquad H_y:=-\Delta_y+|y|^2.
\end{equation}
In particular, the normalized ground state is
\begin{equation}\label{eq:ground-state}
 \phi_0(y)=\pi^{-d/4}e^{-|y|^2/2},
 \qquad H_y\phi_0=d\phi_0.
\end{equation}

For $u\in L^2(\mathbb{R}^N)$, we define the Hermite coefficients
$$
 u_\alpha(z):=\int_{\mathbb{R}^d}u(y,z)\overline{\phi_\alpha(y)}\,\mathrm{d}y
$$
and then take the Fourier transform in $z$.  The resulting unitary map
$$
 \mathcal U:L^2(\mathbb{R}^N)
 \longrightarrow\bigoplus_{\alpha\in\mathbb N_0^d}L^2(\mathbb{R}^m_\xi)
$$
turns $H$ into the direct sum of multiplication operators
\begin{equation}\label{eq:direct-sum-spectral}
 \mathcal U H\mathcal U^{-1}
 =\bigoplus_{\alpha\in\mathbb N_0^d}
 M_{d+2|\alpha|+|\xi|^2}.
\end{equation}
See e.g. \cite[Section 2.5]{Teschl2009}.
This immediately gives the complete spectral description.

\begin{proposition}\label{pro:20260514-1010}
The operator $H$ has purely absolutely continuous spectrum and
\begin{equation}\label{eq:20260514-0958}
 \sigma(H)=\sigma_{\rm ac}(H)=[d,\infty),
 \qquad
 \sigma_{\rm p}(H)=\sigma_{\rm sc}(H)=\varnothing.
\end{equation}
Consequently,
\begin{equation}\label{eq4-1}
 \int_{\mathbb{R}^N}\bigl(|\nabla u|^2+|y|^2u^2\bigr)\,\mathrm{d}x
 \geq d\int_{\mathbb{R}^N}u^2\,\mathrm{d}x,
 \qquad u\in X.
\end{equation}
\end{proposition}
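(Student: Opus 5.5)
The plan is to read everything off the unitary equivalence \eqref{eq:direct-sum-spectral}. Since $\mathcal U$ is unitary, spectral properties of $H$ coincide with those of
$$
 T:=\bigoplus_{\alpha\in\mathbb N_0^d}M_{f_\alpha},
 \qquad f_\alpha(\xi):=d+2|\alpha|+|\xi|^2,
$$
acting on $\bigoplus_\alpha L^2(\mathbb{R}^m_\xi)$. I would therefore analyse each multiplication operator separately and then assemble the results using the fact that spectral types are compatible with orthogonal direct sums.

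The first step treats a single summand. For a continuous real function $f$ on $\mathbb{R}^m$, the spectrum of $M_f$ is the essential range of $f$, which here is $[d+2|\alpha|,\infty)$. The spectral measure of $\psi\in L^2(\mathbb{R}^m)$ with respect to $M_{f_\alpha}$ is the push-forward of $|\psi(\xi)|^2\,\mathrm{d}\xi$ under $f_\alpha$. To see that this is absolutely continuous, use polar coordinates, $\xi=\rho\omega$. With $t=d+2|\alpha|+\rho^2$, so that $\rho=\sqrt{t-d-2|\alpha|}$, we have
$$
 \int_{f_\alpha^{-1}(E)}|\psi|^2\,\mathrm{d}\xi
 =\int_E\Bigl(\tfrac12\rho^{m-2}\int_{S^{m-1}}|\psi(\rho\omega)|^2\,\mathrm{d}\omega\Bigr)\mathrm{d}t .
$$
The bracket is an $L^1$ density, so any Lebesgue-null set $E$ has measure zero. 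For $m=1$ the sphere is just $\{\pm1\}$ and the same formula applies. Hence each $M_{f_\alpha}$ is purely absolutely continuous and has no eigenvalues. Indeed, an eigenvalue would require a level set $\{|\xi|^2=c\}$ of positive measure, and such level sets are null.

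The second step assembles the summands. The spectral measure of a vector $\Psi=(\psi_\alpha)_\alpha$ is $\mu_\Psi=\sum_\alpha\mu_{\psi_\alpha}$, a countable sum of absolutely continuous measures with finite total mass $\|\Psi\|^2$, so $\mu_\Psi$ is absolutely continuous. This gives $\sigma_{\rm p}(H)=\sigma_{\rm sc}(H)=\varnothing$ and $\sigma(H)=\sigma_{\rm ac}(H)$. For the spectrum itself, observe that $\sigma(T)$ is the closure of $\bigcup_\alpha[d+2|\alpha|,\infty)$, which equals $[d,\infty)$. This uses that the bottoms $d+2|\alpha|$ are bounded below, so no extra accumulation points appear. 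One can also verify it directly: if $\mathrm{dist}(\lambda,[d,\infty))=\delta>0$, then the resolvent $\bigoplus_\alpha(f_\alpha-\lambda)^{-1}$ is bounded by $\delta^{-1}$ uniformly in $\alpha$.

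The final step proves \eqref{eq4-1}. By the first representation theorem, for $u\in Q(H)=X$ we have $\mathfrak h[u,u]=\|H^{1/2}u\|^2=\int t\,\mathrm{d}\mu_u(t)$. Since $\mu_u$ is supported in $[d,\infty)$, this integral is at least $d\|u\|^2$. Alternatively, Plancherel gives $\mathfrak h[u,u]=\sum_\alpha\int f_\alpha|\widehat{u_\alpha}|^2\,\mathrm{d}\xi\ge d\|u\|^2$ once the form is identified in Hermite–Fourier variables.

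I expect the main obstacle to be justifying that $\mathcal U$ carries the form domain $X$ onto $\{(\psi_\alpha):\sum_\alpha\int f_\alpha|\psi_\alpha|^2<\infty\}$. This identification is exactly what \eqref{eq:direct-sum-spectral} provides at the operator level, so I would deduce it by taking square roots: $Q(H)=D(H^{1/2})$. Beyond that, the remaining work is routine measure-theoretic bookkeeping.
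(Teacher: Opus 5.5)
Your proposal is correct and follows essentially the same route as the paper. Both arguments obtain the spectrum as the closure of the essential ranges $[d+2|\alpha|,\infty)$, prove absolute continuity of each summand's spectral measure via polar coordinates and the substitution $t=d+2|\alpha|+\rho^2$ (with the $m=1$ case treated through the two half-lines), sum over $\alpha$, and deduce \eqref{eq4-1} from the Hermite--Fourier representation of the form. Your main route to \eqref{eq4-1}, namely $\mathfrak h[u,u]=\|H^{1/2}u\|^2=\int t\,\mathrm d\mu_u(t)$ with $Q(H)=D(H^{1/2})$, is a slightly cleaner way to justify the form-domain identification that the paper simply asserts.
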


\begin{proof} This result is well-known to experts in this field; see e.g. \cite[Corollary, Page 301]{Reed-Simon-1980} or \cite[Section 7]{Teschl2009}. Here we provide a proof for the reader's convenience.
For brevity, set
$$
 a_\alpha:=d+2|\alpha|,
 \qquad
 q_\alpha(\xi):=a_\alpha+|\xi|^2.
$$
The spectral set follows from \eqref{eq:direct-sum-spectral}: the essential range of the $\alpha$-th multiplier is $[d+2|\alpha|,\infty)$, and the union over $\alpha$ is $[d,\infty)$.
Indeed, the spectrum of a multiplication operator $M_q$ is the essential
range of its multiplier $q$.  Hence
$$
 \sigma(M_{q_\alpha})=[a_\alpha,\infty),
$$
and the spectrum of the orthogonal direct sum in
\eqref{eq:direct-sum-spectral} is the closure of the union of these sets.
Since the mode $\alpha=0$ is present and $a_0=d$, this union is already the
closed interval $[d,\infty)$.

To identify the spectral type, let $F=(F_\alpha)_\alpha$ belong to the direct sum in \eqref{eq:direct-sum-spectral}.

For a Borel set $B\subset\mathbb{R}$, the spectral projection of the
$\alpha$-th multiplication operator is multiplication by
$\mathbf 1_B(q_\alpha)$.  Therefore the spectral projection of the direct
sum acts componentwise as
$$
 \bigl(E(B)F\bigr)_\alpha(\xi)
 =\mathbf 1_B\bigl(q_\alpha(\xi)\bigr)F_\alpha(\xi).
$$
Taking its inner product with $F$ gives the following scalar spectral measure:
$$
 \mu_F(B)=\sum_\alpha\int_{\mathbb{R}^m}
 \mathbf 1_B\bigl(d+2|\alpha|+|\xi|^2\bigr)|F_\alpha(\xi)|^2\,\mathrm{d}\xi.
$$
For each fixed $\alpha$, denote the corresponding summand by
$$
 \mu_{F,\alpha}(B)
 :=\int_{\mathbb{R}^m}\mathbf 1_B(a_\alpha+|\xi|^2)
 |F_\alpha(\xi)|^2\,\mathrm d\xi.
$$
For each fixed $\alpha$, polar coordinates and the change of variables
$t=d+2|\alpha|+r^2$ show that the corresponding summand is absolutely
continuous with respect to one-dimensional Lebesgue measure.
More explicitly, when $m\geq2$, polar coordinates $\xi=r\omega$ give
$$
 \mu_{F,\alpha}(B)
 =\int_0^\infty \mathbf 1_B(a_\alpha+r^2)
 \left(\int_{\mathbb S^{m-1}}|F_\alpha(r\omega)|^2\,\mathrm d\omega\right)
 r^{m-1}\,\mathrm dr.
$$
Under the substitution $t=a_\alpha+r^2$, one has
$$
 \mathrm dt=2r\,\mathrm dr,
 \qquad
 r^{m-1}\,\mathrm dr
 =\frac12(t-a_\alpha)^{\frac{m-2}{2}}\,\mathrm dt.
$$
Consequently $\mu_{F,\alpha}(B)$ is the integral over $B$ of the density
written below.
If $m\geq2$, then, for $t>d+2|\alpha|$, its density is represented almost everywhere by
$$
 \rho_{F,\alpha}(t)
 =\frac12\bigl(t-d-2|\alpha|\bigr)^{\frac{m-2}{2}}
 \int_{\mathbb S^{m-1}}
 |F_\alpha(\sqrt{t-d-2|\alpha|}\,\omega)|^2\,\mathrm{d}\omega,
$$
and $\rho_{F,\alpha}(t)=0$ for $t\leq d+2|\alpha|$.
When $m=1$, applying the same change of variables separately on the two
half-lines gives the following density.  Indeed, writing the integral on
$\mathbb{R}$ as the sum of its positive and negative half-line parts yields
$$
 \mu_{F,\alpha}(B)
 =\int_0^\infty\mathbf 1_B(a_\alpha+r^2)
 \bigl(|F_\alpha(r)|^2+|F_\alpha(-r)|^2\bigr)\,\mathrm dr.
$$
Since $\mathrm dr=\mathrm dt/(2\sqrt{t-a_\alpha})$, this becomes
$\mu_{F,\alpha}(B)=\int_B\rho_{F,\alpha}(t)\,\mathrm dt$, where
$$
 \rho_{F,\alpha}(t)
 =\frac{|F_\alpha(\sqrt{t-d-2|\alpha|})|^2
       +|F_\alpha(-\sqrt{t-d-2|\alpha|})|^2}
      {2\sqrt{t-d-2|\alpha|}}
 \mathbf 1_{(d+2|\alpha|,\infty)}(t).
$$
Thus the conclusion includes the one-dimensional free direction without any
implicit convention.
The possible threshold singularity in the last formula causes no difficulty:
by the preceding change of variables, its integral over
$(a_\alpha,a_\alpha+\varepsilon)$ equals
$$
 \int_{-\sqrt\varepsilon}^{\sqrt\varepsilon}|F_\alpha(\xi)|^2\,\mathrm d\xi,
$$
which is finite.  Hence, in every dimension $m\geq1$,
$\mu_{F,\alpha}\ll\mathrm dt$.

It remains to spell out why this proves that the spectrum is purely
absolutely continuous.  If $B$ has one-dimensional Lebesgue measure zero,
then $\mu_{F,\alpha}(B)=0$ for every $\alpha$, and therefore
$$
 \mu_F(B)=\sum_\alpha\mu_{F,\alpha}(B)=0.
$$
Thus $\mu_F\ll\mathrm dt$.  Since $F$ was arbitrary, every scalar spectral
measure of the direct-sum operator, and hence by unitary equivalence every
scalar spectral measure of $H$, is absolutely continuous.  Equivalently,
its absolutely continuous spectral subspace is the whole space:
$$
 \mathcal H_{\rm ac}(H)=L^2(\mathbb{R}^N).
$$
The canonical spectral decomposition
$$
 L^2(\mathbb{R}^N)
 =\mathcal H_{\rm ac}(H)\oplus
  \mathcal H_{\rm sc}(H)\oplus
  \mathcal H_{\rm pp}(H)
$$
then implies
$$
 \mathcal H_{\rm sc}(H)=\mathcal H_{\rm pp}(H)=\{0\}.
$$
Consequently,
$$
 \sigma_{\rm sc}(H)=\sigma_{\rm p}(H)=\varnothing,
 \qquad
 \sigma_{\rm ac}(H)=\sigma(H)=[d,\infty),
$$
which proves \eqref{eq:20260514-0958}.

Finally, if $u\in X=Q(H)$ and $F=\mathcal Uu$, then the quadratic form is
represented by
$$
 \mathfrak{h}[u,u]
 =\sum_\alpha\int_{\mathbb{R}^m}
 \bigl(a_\alpha+|\xi|^2\bigr)|F_\alpha(\xi)|^2\,\mathrm d\xi.
$$
Since $a_\alpha+|\xi|^2\geq d$ for every $\alpha$ and $\xi$, Parseval's
identity gives
$$
 \mathfrak{h}[u,u]\geq d\sum_\alpha\|F_\alpha\|_{L^2(\mathbb{R}^m)}^2
 =d\|u\|_{L^2(\mathbb{R}^N)}^2,
$$
which is precisely \eqref{eq4-1}.
\end{proof}

\begin{remark}\label{remark:20260518-1736}
For every $\varepsilon\in[0,1)$, scaling the $y$-variables gives
$$
 \sigma\bigl(-(1-\varepsilon)\Delta_y+|y|^2\bigr)
 =\left\{\sqrt{1-\varepsilon}\,(d+2k):k=0,1,2,\ldots\right\}.
$$
Combining this with the free Fourier variable yields
\begin{equation}\label{eq:deformed-bottom}
 \inf\sigma\bigl(-(1-\varepsilon)\Delta+|y|^2\bigr)
 =d\sqrt{1-\varepsilon}.
\end{equation}
\end{remark}

The next lemma is the rigorous ground-state projection identity.  It is deliberately formulated at the level of the quadratic form; no assumption $u\in D(H_y)$ is made.

\begin{lemma}\label{lemma:20260525-0923}
Let $u\in C^2(\mathbb{R}^N)$.  Assume either $u\in X$ or that $u$ satisfies \ref{Assumption:A}.  Then, for every $\eta\in C_c^\infty(\mathbb{R}^m)$,
\begin{equation}\label{eq:form-ground-pairing}
 \int_{\mathbb{R}^N}
 \left(\nabla_yu\cdot\nabla_y\phi_0+|y|^2u\phi_0-du\phi_0\right)
 \eta(z)\,\mathrm{d}x=0.
\end{equation}
Equivalently, for almost every $z$ in the $X$-case,
\begin{equation}\label{eq2-2}
 q_y\bigl(u(\cdot,z),\phi_0\bigr)
 =d\int_{\mathbb{R}^d}u(y,z)\phi_0(y)\,\mathrm{d}y,
\end{equation}
where $q_y$ is the closed quadratic form of $H_y$.
\end{lemma}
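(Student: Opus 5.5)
The plan is to prove \eqref{eq:form-ground-pairing} by integrating by parts in $y$ alone, moving all derivatives onto $\phi_0$ and using \eqref{eq:ground-state}. For each fixed $z$ the function $\phi_0$ is Schwartz and satisfies $-\Delta_y\phi_0+|y|^2\phi_0-d\phi_0=0$. Formally, for a fixed $z$,
$$
 \int_{\mathbb{R}^d}\bigl(\nabla_yu\cdot\nabla_y\phi_0+|y|^2u\phi_0-du\phi_0\bigr)\,\mathrm dy
 =\int_{\mathbb{R}^d}u\,\bigl(-\Delta_y\phi_0+|y|^2\phi_0-d\phi_0\bigr)\,\mathrm dy=0 .
$$
The identity then follows by multiplying by $\eta(z)$ and integrating in $z$, via Fubini. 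All the work is in justifying the dropped boundary terms and the Fubini step in the two cases.

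\emph{Case \ref{Assumption:A}.} Fix $K=\operatorname{supp}\eta$. By \eqref{eq2-1}, on $\mathbb{R}^d\times K$ we have $|u|+|\nabla_yu|\le C_Ke^{\kappa_K|y|^2}$, while $\phi_0$ and $\nabla_y\phi_0=-y\phi_0$ decay like $(1+|y|)e^{-|y|^2/2}$. Since $\kappa_K<1/2$, the products $|\nabla_yu||\nabla\phi_0|$, $|y|^2|u|\phi_0$ and $|u|\phi_0$ are bounded by $C(1+|y|^2)e^{-(1/2-\kappa_K)|y|^2}$ uniformly in $z\in K$. This is integrable on $\mathbb{R}^d\times K$, so Fubini applies.

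For fixed $z$, integrate by parts on the ball $B_R\subset\mathbb{R}^d$. The boundary term is $\int_{\partial B_R}u\,\partial_\nu\phi_0$, which is bounded by $CR^{d}e^{-(1/2-\kappa_K)R^2}\to0$ as $R\to\infty$. This gives the pointwise-in-$z$ identity, and integrating against $\eta$ yields \eqref{eq:form-ground-pairing}. The bound on $\Delta_yu$ is not even needed here, since only one integration by parts is required.

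\emph{Case $u\in X$.} Here $u\in H^1(\mathbb{R}^N)$ and $|y|u\in L^2$. By Fubini, for a.e.\ $z$ the slice $u(\cdot,z)$ satisfies
$$
 \int_{\mathbb{R}^d}\bigl(|\nabla_yu|^2+(1+|y|^2)u^2\bigr)(y,z)\,\mathrm dy<\infty ,
$$
so $u(\cdot,z)\in Q(q_y)$. The form domain of $H_y$ is exactly $\{v\in H^1(\mathbb{R}^d):|y|v\in L^2\}$. Since $\phi_0\in D(H_y)$ with $H_y\phi_0=d\phi_0$, the first representation theorem gives $q_y(v,\phi_0)=\langle v,H_y\phi_0\rangle=d\langle v,\phi_0\rangle$ for every $v\in Q(q_y)$. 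This is \eqref{eq2-2} for a.e.\ $z$.

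For the integrated form, note that the integrand in \eqref{eq:form-ground-pairing} is in $L^1(\mathbb{R}^N)$, by Cauchy--Schwarz with $\nabla u,|y|u,u\in L^2$ and with $\nabla\phi_0\,\eta$, $|y|\phi_0\,\eta$, $\phi_0\,\eta$ in $L^2$. Fubini then integrates \eqref{eq2-2} against $\eta$. The converse direction (from \eqref{eq:form-ground-pairing} to a.e.\ $z$) follows because $\eta$ is arbitrary and the $z$-function involved is $L^1_{\rm loc}$. The $C^2$ hypothesis only serves to identify the weak and classical $y$-gradients of slices, which is standard for smooth $u$.

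The main, though mild, obstacle is the $X$-case slice argument. One must check that a.e.\ slice lies in the form domain of $H_y$, and that the weak $y$-gradient of the slice coincides with the restriction of $\nabla_yu$. For $C^2$ functions this is immediate. The cleanest route is to cite the characterization of $Q(H_y)$, or to approximate $\phi_0$ by $\phi_0\chi(y/R)$ and let $R\to\infty$ using dominated convergence.
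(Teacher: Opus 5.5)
Your proposal is correct and follows essentially the paper's proof. In the $X$-case you apply the first representation theorem $q_y(v,\phi_0)=\langle v,H_y\phi_0\rangle=d\langle v,\phi_0\rangle$ to almost every slice and integrate against $\eta$. Under \ref{Assumption:A} you truncate in $y$, integrate by parts once, and kill the truncation error with the Gaussian decay of $\phi_0$, which beats $e^{\kappa_K|y|^2}$ because $\kappa_K<1/2$. The only differences are cosmetic: you use a sharp ball with boundary term $\int_{\partial B_R}u\,\partial_\nu\phi_0$ instead of the paper's smooth cutoff $\chi_L\phi_0$, and you write out the $L^1$ bounds justifying Fubini, which the paper leaves implicit.
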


\begin{proof}
Suppose first that $u\in X$.  Fubini gives $u(\cdot,z)\in Q(H_y)$ for almost every $z$.  Since $C_c^\infty(\mathbb{R}^d)$ is dense in $Q(H_y)$ with respect to the form norm and $\phi_0\in D(H_y)$,
$$
 q_y(v,\phi_0)=\langle v,H_y\phi_0\rangle_{L^2_y}
 =d\langle v,\phi_0\rangle_{L^2_y},
 \qquad v\in Q(H_y).
$$
Applying this to $v=u(\cdot,z)$ and integrating against $\eta(z)$ proves the claim.

Under \ref{Assumption:A}, choose $\chi\in C_c^\infty(\mathbb{R}^d)$ with $\chi=1$ on $B_1$, put $\chi_L(y)=\chi(y/L)$, and integrate by parts in $y$ with the compactly supported function $\chi_L\phi_0$.  The error terms are supported where $L\lesssim|y|\lesssim2L$ and contain only $u$, $\nabla_yu$, $\phi_0$ and derivatives of $\chi_L$.  On the compact $z$-support of $\eta$, condition \ref{Assumption:A} provides constants $C_K$ and $\kappa_K<1/2$ independent of $z$.  Hence every error is bounded by a polynomial in $L$ times
$e^{-(1/2-\kappa_K)L^2}$ and tends to zero.  Dominated convergence then gives \eqref{eq:form-ground-pairing}.
\end{proof}

We now prove the necessary spectral condition.  The same argument also applies to a nonnegative supersolution of $(H+\lambda)u\geq0$.

\begin{proof}[Proof of Theorem \ref{thm:nec-liou}\,(i)]
Assume by contradiction that $\lambda<-d$.  Let $\psi_R$ be the positive first Dirichlet eigenfunction of $-\Delta_z$ on $B_R^m$, with eigenvalue $\mu_R=R^{-2}\mu_1$, normalized arbitrarily and extended by zero outside $B_R^m$.  Choose $R$ so large that
$$
 d+\lambda+\mu_R<0.
$$
The zero extension satisfies, in the sense of distributions,
\begin{equation}\label{eq:dirichlet-extension}
 -\Delta_z\psi_R\leq\mu_R\psi_R,
\end{equation}
because the outward normal derivative of the positive first eigenfunction is strictly negative on $\partial B_R^m$.

Use $\chi_L(y)\phi_0(y)\psi_R(z)$ as a test function, with a standard smooth approximation in the $z$-variable if desired, and then let $L\rightarrow\infty$.  Lemma \ref{lemma:20260525-0923} treats the $y$-part.  The $z$-part is estimated by \eqref{eq:dirichlet-extension}; the $X$-case follows by form approximation and the \ref{Assumption:A} case by the locally uniform Gaussian bound.  We obtain
$$
 0\leq\int_{\mathbb{R}^N}g(u)\phi_0\psi_R\,\mathrm{d}x
 \leq(d+\lambda+\mu_R)
 \int_{\mathbb{R}^N}u\phi_0\psi_R\,\mathrm{d}x<0,
$$
which is impossible because $u>0$, $\phi_0>0$, and $\psi_R>0$ in $B_R^m$.  Thus $\lambda\geq-d$.
\end{proof}

\section{Liouville theorems at the zero-mass threshold}\label{sec:Liouville-results}

At $\lambda=-d$ the global lower bound for $H-d$ is zero.  We first record two projection-based Liouville principles which depend on the number $m=N-d$ of free variables.  We then turn to the stronger finite-energy pure-power theorem, whose proof uses additional structure and reaches the full Sobolev critical exponent in dimension $N$.

\begin{lemma}\label{lem:weak-superharmonic}
Let $k\geq1$.  Suppose $v\geq0$, $v\in L^q(\mathbb{R}^k)$, and $-\Delta v\geq0$ in $\mathcal D'(\mathbb{R}^k)$.  If $q\geq1$ when $k\leq2$, or
$1\leq q\leq k/(k-2)$ when $k>2$, then $v=0$ almost everywhere.
\end{lemma}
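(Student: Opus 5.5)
We prove the lemma by first reducing to smooth superharmonic functions via mollification, and then testing the positive mass of $v$ against cutoffs at scale $R$. Let $\rho_\varepsilon$ be a standard mollifier and set $v_\varepsilon:=v*\rho_\varepsilon$. Then $v_\varepsilon\in C^\infty(\mathbb{R}^k)$, $v_\varepsilon\geq0$, $-\Delta v_\varepsilon=(-\Delta v)*\rho_\varepsilon\geq0$ pointwise, and $\|v_\varepsilon\|_{L^q}\le\|v\|_{L^q}$. Since $v_\varepsilon\to v$ in $L^1_{\rm loc}$, it suffices to show $v_\varepsilon\equiv0$ for each $\varepsilon$. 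So we may assume from now on that $v$ is smooth and superharmonic.

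\emph{The case $k\leq2$.} A nonnegative superharmonic function on $\mathbb{R}^1$ or $\mathbb{R}^2$ is constant; this is the classical Liouville property coming from the recurrence of Brownian motion. In $k=1$ it is immediate: $v''\le0$ makes $v$ concave, and a concave function bounded below on $\mathbb{R}$ is constant. In $k=2$ we compare $v$ with $\min_{\partial B_r}v+\delta\log(|x|/r)$ on annuli and let $\delta\to0$. This shows the spherical minimum $m(r):=\min_{\partial B_r}v$ is nonincreasing and at the same time concave in $\log r$. A concave function bounded below on $\mathbb{R}$ is nondecreasing, so $m$ is constant, and the strong minimum principle then forces $v$ to be constant. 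Finally, a constant function lies in $L^q(\mathbb{R}^k)$ with $q<\infty$ only if it vanishes, so $v\equiv0$.

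\emph{The case $k>2$.} This is the main step, and it is a mean-value comparison. For superharmonic $v\geq0$, the spherical means $\bar v(r)$ are nonincreasing, and the minimum principle on annuli, compared against the fundamental solution, gives a lower bound. Fix $x_0$ with $v(x_0)>0$ and let $c:=\min_{\overline B_1(x_0)}v>0$, which is positive by the strong minimum principle unless $v\equiv0$. Compare $v$ on $B_R(x_0)\setminus \overline B_1(x_0)$ with
\[
w(x)=c\,\frac{|x-x_0|^{2-k}-R^{2-k}}{1-R^{2-k}},
\]
which is harmonic in the annulus and satisfies $w\le v$ on both boundary spheres, since $w=c\le v$ on $\partial B_1(x_0)$ and $w=0\le v$ on $\partial B_R(x_0)$. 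Letting $R\to\infty$ gives
\[
v(x)\geq c|x-x_0|^{2-k}\qquad\text{for }|x-x_0|\geq1.
\]
Then $\int_{|x|>1}|x|^{(2-k)q}\,\mathrm dx$ diverges exactly when $(k-2)q\le k$, that is, when $q\le k/(k-2)$. Hence $v\in L^q$ forces $v\equiv0$ in this range.

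It remains to pass from the smooth case back to the distributional statement. Applying the above to each $v_\varepsilon$ gives $v_\varepsilon\equiv0$, and since $v_\varepsilon\to v$ in $L^1_{\rm loc}$, we conclude $v=0$ almost everywhere.
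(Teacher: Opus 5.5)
Your proof is correct and takes the same route as the paper: mollify to obtain smooth nonnegative superharmonic $v_\varepsilon\in L^q$, show $v_\varepsilon\equiv0$ by the classical $L^q$ Liouville theorem, and pass to the limit. The only difference is that the paper simply cites that classical theorem, whereas you prove it yourself: constancy of nonnegative superharmonic functions when $k\le2$, and the comparison $v\ge c|x-x_0|^{2-k}$ with the fundamental solution when $k>2$, which is exactly what makes $q\le k/(k-2)$ the sharp range.
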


\begin{proof}
Let $\rho_\varepsilon$ be a standard nonnegative mollifier and set $v_\varepsilon=v*\rho_\varepsilon$.  Then $v_\varepsilon\geq0$, $-\Delta v_\varepsilon\geq0$ pointwise, and $v_\varepsilon\in L^q$.  The classical $L^q$ Liouville theorem for nonnegative superharmonic functions (see, e.g., \cite[Lemma A.2]{ikoma2014compactness}) gives $v_\varepsilon\equiv0$.  Since $v_\varepsilon\rightarrow v$ in $L^q$, the conclusion follows.
\end{proof}

\begin{theorem}\label{thm:Lp-Liouville}
Let $u\in C^2(\mathbb{R}^N)\cap L^q(\mathbb{R}^N)$ be nonnegative and satisfy
$$
 (H-d)u\geq0.
$$
Assume either $u\in X$ or \ref{Assumption:A}.  If $q\geq1$ when $m\leq2$, or
$1\leq q\leq m/(m-2)$ when $m>2$, then $u\equiv0$.
\end{theorem}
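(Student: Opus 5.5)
The plan is to project onto the oscillator ground state and reduce the statement to Lemma~\ref{lem:weak-superharmonic} in the free variables. Set
$$
 v(z):=\int_{\mathbb{R}^d}u(y,z)\phi_0(y)\,\mathrm{d}y\geq0,
 \qquad z\in\mathbb{R}^m.
$$
This is well defined: under \ref{Assumption:A} the integrand is dominated by $C_Ke^{(\kappa_K-1/2)|y|^2}$ locally uniformly in $z$, so $v\in C(\mathbb{R}^m)$. In the $X$-case $v\in L^2_{\rm loc}$ by Cauchy--Schwarz and Fubini. Take $0\leq\eta\in C_c^\infty(\mathbb{R}^m)$ and test the pointwise inequality $(H-d)u\geq0$ against $\chi_L(y)\phi_0(y)\eta(z)$, as in the proof of Theorem~\ref{thm:nec-liou}\,(i).

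The $y$-part is handled by Lemma~\ref{lemma:20260525-0923}. After integrating by parts in $y$ and letting $L\to\infty$, the contributions $-\Delta_yu+|y|^2u-du$ paired with $\phi_0\eta$ vanish. For the $z$-part, integrating by parts twice in $z$ (legitimate since $\eta$ has compact support and the $y$-integrals converge) yields
$$
 0\leq\int_{\mathbb{R}^N}(-\Delta_zu)\phi_0\eta\,\mathrm{d}x
 =\int_{\mathbb{R}^m}v(z)(-\Delta_z\eta)(z)\,\mathrm{d}z .
$$
In the $X$-case this should be done at the form level: use the form approximation of $\chi_L\phi_0\eta$, or use that $u(\cdot,z)\in Q(H_y)$ for a.e.\ $z$ and $\phi_0\in D(H_y)$. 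Either way, $-\Delta_zv\geq0$ in $\mathcal D'(\mathbb{R}^m)$. This limiting step is the main technical point, especially justifying the $L\to\infty$ passage in the $X$-case from $C^2$ plus finite energy alone. I would first try an approximation of $\chi_L\phi_0$ in the $q_y$-form norm uniformly over the $z$-support of $\eta$, using dominated convergence in $z$.

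Next I need $v\in L^q(\mathbb{R}^m)$. For $q\geq1$, Jensen (or H\"older) with the measure $\phi_0\,\mathrm{d}y$, which has finite mass $\|\phi_0\|_{L^1}=(4\pi)^{d/4}$, gives
$$
 v(z)^q\leq\|\phi_0\|_{L^1}^{q-1}\int_{\mathbb{R}^d}u(y,z)^q\phi_0(y)\,\mathrm{d}y
 \leq C\|\phi_0\|_{L^\infty}\int_{\mathbb{R}^d}u(y,z)^q\,\mathrm{d}y .
$$
Integrating in $z$ gives $\|v\|_{L^q(\mathbb{R}^m)}\leq C\|u\|_{L^q(\mathbb{R}^N)}<\infty$.

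Lemma~\ref{lem:weak-superharmonic} with $k=m$ and the stated range of $q$ then gives $v=0$ a.e. Since $u\geq0$ is continuous and $\phi_0>0$, $v(z)=0$ forces $u(\cdot,z)\equiv0$ for a.e.\ $z$, and by continuity $u\equiv0$.
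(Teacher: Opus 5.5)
Your proposal is correct and follows the paper's proof essentially step for step. Both arguments use the ground-state projection $v$, H\"older/Jensen to get $v\in L^q(\mathbb{R}^m)$, the truncated test $\chi_L\phi_0\eta$ together with Lemma~\ref{lemma:20260525-0923} to obtain $-\Delta_z v\geq0$ in $\mathcal D'(\mathbb{R}^m)$, and then Lemma~\ref{lem:weak-superharmonic}. The one point to state carefully is that the $z$-derivatives should be moved onto $\eta$ while the cutoff $\chi_L$ is still present, before letting $L\to\infty$, rather than writing $\int(-\Delta_z u)\phi_0\eta\,\mathrm{d}x$, whose absolute convergence is not a priori clear in the $X$-case.
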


\begin{proof}
Define
\begin{equation}\label{eq:projected-v}
 v(z):=\int_{\mathbb{R}^d}u(y,z)\phi_0(y)\,\mathrm{d}y.
\end{equation}
If $q>1$, H\"older's inequality gives
$$
 |v(z)|^q\leq
 \left(\int_{\mathbb{R}^d}|u(y,z)|^q\,\mathrm{d}y\right)
 \left(\int_{\mathbb{R}^d}\phi_0^{q/(q-1)}\,\mathrm{d}y\right)^{q-1};
$$
for $q=1$ use $\|\phi_0\|_\infty$.  Hence $v\in L^q(\mathbb{R}^m)$.

Let $0\leq\eta\in C_c^\infty(\mathbb{R}^m)$ and use the $y$-truncated test
$\chi_L(y)\phi_0(y)\eta(z)$ in the distributional inequality.  Passing to the limit $L\rightarrow\infty$ by Lemma \ref{lemma:20260525-0923} gives
$$
 \int_{\mathbb{R}^m}v(z)(-\Delta_z\eta(z))\,\mathrm{d}z\geq0.
$$
Thus $-\Delta_zv\geq0$ in $\mathcal D'(\mathbb{R}^m)$.  Lemma \ref{lem:weak-superharmonic} gives $v=0$.  Since $u\geq0$ and $\phi_0>0$, this forces $u=0$ almost everywhere, hence identically by continuity.
\end{proof}

\begin{theorem}\label{thm:our-Liouville}
Let $u\in C^2(\mathbb{R}^N)$ be nonnegative and satisfy
$$
 (H-d)u\geq u^p.
$$
Assume either $u\in X$ or \ref{Assumption:A}.  If $p\geq1$ when $m\leq2$, or
$1\leq p\leq m/(m-2)$ when $m>2$, then $u\equiv0$.
\end{theorem}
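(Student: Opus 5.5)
Project onto the oscillator ground state as in Theorem \ref{thm:Lp-Liouville}, with $v(z):=\int_{\mathbb{R}^d}u(y,z)\phi_0(y)\,\mathrm{d}y$ as in \eqref{eq:projected-v}. Take the test function $\chi_L(y)\phi_0(y)\eta(z)$ with $0\le\eta\in C_c^\infty(\mathbb{R}^m)$ in $(H-d)u\ge u^p$. The left side converges as $L\to\infty$ by Lemma \ref{lemma:20260525-0923}. The right side is nonnegative and increases to $\int u^p\phi_0\eta$ by monotone convergence. This gives, in $\mathcal D'(\mathbb{R}^m)$,
$$
 -\Delta_z v\ \ge\ w(z):=\int_{\mathbb{R}^d}u(y,z)^p\phi_0(y)\,\mathrm{d}y .
$$
In particular $w\in L^1_{\rm loc}$. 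Since $\int\phi_0\,\mathrm{d}y=\pi^{d/4}$, Jensen's inequality for the probability measure $\pi^{-d/4}\phi_0\,\mathrm{d}y$ and the convex map $s\mapsto s^p$ ($p\ge1$) give
$$
 w(z)=\pi^{d/4}\int u^p\,\frac{\phi_0}{\pi^{d/4}}\,\mathrm{d}y\ \ge\ \pi^{d/4}\Bigl(\pi^{-d/4}v(z)\Bigr)^p=c_{d,p}\,v(z)^p .
$$
So $v\ge0$ is a distributional solution of $-\Delta_z v\ge c\,v^p$ on $\mathbb{R}^m$. The constant $c$ can be removed by scaling $v$.

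Next I invoke the classical Liouville theorem for $-\Delta v\ge v^p$ on $\mathbb{R}^m$ (Gidas, Serrin--Zou, Mitidieri--Pohozaev). For $m\le2$ and any $p\ge1$, and for $m>2$ and $1<p\le m/(m-2)$, every nonnegative distributional (very weak) supersolution vanishes. I would prove this with the Mitidieri--Pohozaev test-function method. Test with $\eta_R^k(z)=\eta(z/R)^k$, where $k$ is large, and apply Hölder to get
$$
 \int v^p\eta_R^k\le \int v\,|\Delta\eta_R^k|\le C\Bigl(\int v^p\eta_R^k\Bigr)^{1/p}R^{\,m(p-1)/p-2},
$$
hence $\int v^p\eta_R^k\le C R^{\,m-2p'}$. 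For $p<m/(m-2)$ the exponent is negative, so $v\equiv0$. In the critical case $p=m/(m-2)$ the bound gives $v\in L^p$, and re-running the estimate with the annulus integrals $\int_{R<|z|<2R}v^p\to0$ gives $v\equiv0$.

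The cases with $p=1$ (including $m\le2$) need a separate argument, since there $-\Delta v\ge cv\ge0$. For $m\le2$ every nonnegative superharmonic function on $\mathbb{R}^m$ is constant; this can be done after mollifying, as in Lemma \ref{lem:weak-superharmonic}. The constant then satisfies $0\ge c\,\mathrm{const}^p$, so it is zero. This also covers all $p\ge1$ when $m\le2$. For $m>2$ and $p=1$, test against the principal Dirichlet eigenfunction $\psi_R$ on $B_R^m$, as in the proof of part (i). This gives $\mu_R\int v\psi_R\ge c\int v\psi_R$, and letting $R\to\infty$ forces $v=0$ on every ball.

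Once $v\equiv0$, the facts $u\ge0$ and $\phi_0>0$ give $u=0$ almost everywhere, hence everywhere by continuity. I expect the main obstacle to be rigor at the distributional level. First, the limit $L\to\infty$ must be justified under either $u\in X$ or \ref{Assumption:A}; Lemma \ref{lemma:20260525-0923} handles this. Second, the endpoint $p=m/(m-2)$ must be treated in the very weak setting without assuming local integrability of $v^p$ in advance. That integrability comes from the inequality itself: $w\in L^1_{\rm loc}$ and $w\ge cv^p$.
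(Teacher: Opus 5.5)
Your argument is correct. It differs from the paper's mainly in how the estimate is packaged.

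The paper never introduces $v$ or $w$. It tests $(H-d)u\ge u^p$ directly with $\chi_L\phi_0\psi_R$, where $\psi_R=\zeta(z/R)^{2p/(p-1)}$. It then applies H\"older on $\mathbb{R}^d\times A_R$ after writing $u\phi_0\psi_R^{1/p}=(u^p\phi_0\psi_R)^{1/p}\phi_0^{(p-1)/p}$. That single weighted H\"older step does the work of both your Jensen inequality in $y$ and your H\"older inequality in $z$. It yields the same bound $I_R\le CR^{m-2p/(p-1)}$ and the same annulus argument at $p=m/(m-2)$. For $p=1$ the paper uses the Dirichlet eigenfunction (and its translates) for every $m$. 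So your separate appeal to the constancy of nonnegative superharmonic functions when $m\le 2$ is valid but not needed.

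Your route buys transparency. It shows the statement is exactly the classical Liouville theorem for $-\Delta v\ge c\,v^p$ in the $m$ free variables, which explains the exponent $m/(m-2)$. It also lets you cite the very-weak-supersolution version of that theorem. The cost is extra bookkeeping ($v,w\in L^1_{\rm loc}$, $v^p\le w/c$), which you handle correctly. The paper's computation instead stays self-contained in $\mathbb{R}^N$.

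One harmless slip: $\int_{\mathbb{R}^d}\phi_0\,\mathrm{d}y=\pi^{-d/4}(2\pi)^{d/2}=2^{d/2}\pi^{d/4}$, not $\pi^{d/4}$. The probability measure is therefore $\phi_0\,\mathrm{d}y/\|\phi_0\|_{L^1}$, and the Jensen constant is $c=\|\phi_0\|_{L^1}^{1-p}$, which is still positive.
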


\begin{proof}
For $p=1$, let $\psi_R$ be the first Dirichlet eigenfunction used in the proof of Theorem \ref{thm:nec-liou}\,(i), with $\mu_R<1$.  Testing with $\chi_L\phi_0\psi_R$ and letting $L\rightarrow\infty$ gives
$$
 \int u\phi_0\psi_R\,\mathrm{d}x
 \leq \int (H-d)u\,\phi_0\psi_R\,\mathrm{d}x
 \leq \mu_R\int u\phi_0\psi_R\,\mathrm{d}x.
$$
Since the integral is nonnegative and $\mu_R<1$, it must vanish.  The
same argument applies to every translate $\psi_R(\cdot-z_0)$, where
$z_0\in\mathbb{R}^m$ is arbitrary.  It therefore shows that
$$
 \int_{\mathbb{R}^d\times B_R(z_0)}
 u(y,z)\phi_0(y)\psi_R(z-z_0)\,\mathrm dx=0
 \qquad\text{for every }z_0\in\mathbb{R}^m.
$$
Since the integrand is nonnegative and the weight is strictly positive in
the corresponding cylinder, these translated cylinders cover
$\mathbb{R}^N$, and hence $u\equiv0$.

Assume now $p>1$.  Choose $\zeta\in C_c^\infty(\mathbb{R}^m)$, $0\leq\zeta\leq1$, with $\zeta=1$ on $B_{1/2}$ and $\operatorname{supp}\zeta\subset B_1$, and set
$$
 \alpha:=\frac{2p}{p-1},
 \qquad
 \psi_R(z):=\zeta(z/R)^\alpha.
$$
Then
\begin{equation}\label{eq:Delta-psiR-new}
 |\Delta_z\psi_R|
 \leq CR^{-2}\psi_R^{1/p}\mathbf1_{\{R/2<|z|<R\}}.
\end{equation}
Testing the inequality with $\chi_L(y)\phi_0(y)\psi_R(z)$ and passing to $L\rightarrow\infty$ gives
\begin{equation}\label{eq:IR-start-new}
 I_R:=\int_{\mathbb{R}^N}u^p\phi_0\psi_R\,\mathrm{d}x
 \leq -\int_{\mathbb{R}^N}u\phi_0\Delta_z\psi_R\,\mathrm{d}x.
\end{equation}
The limiting step is justified by the form pairing in the $X$-case and by the locally uniform version of \ref{Assumption:A} in the alternative case.

Let $A_R:=\{z\in\mathbb{R}^m:R/2<|z|<R\}$.  From
\eqref{eq:Delta-psiR-new}--\eqref{eq:IR-start-new} and H\"older's
inequality on $\mathbb{R}^d\times A_R$, we obtain
\begin{align*}
 I_R
 &\leq CR^{-2}\int_{\mathbb{R}^d\times A_R}
       u\phi_0\psi_R^{1/p}\,\mathrm{d}x\\
 &=CR^{-2}\int_{\mathbb{R}^d\times A_R}
       (u^p\phi_0\psi_R)^{1/p}\phi_0^{(p-1)/p}\,\mathrm{d}x\\
 &\leq CR^{-2}I_R^{1/p}
       \left(\int_{\mathbb{R}^d\times A_R}
       \phi_0(y)\,\mathrm{d}x\right)^{(p-1)/p}\\
 &\leq C R^{-2+\frac{m(p-1)}p}I_R^{1/p},
\end{align*}
because $\phi_0\in L^1(\mathbb{R}^d)$ and $|A_R|\leq CR^m$.
If $I_R=0$, the estimate below is immediate.  Otherwise, division by
$I_R^{1/p}$ gives
\begin{equation}\label{eq:IR-bound-new}
 I_R\leq C R^{m-\frac{2p}{p-1}}.
\end{equation}
If $m\leq2$, the exponent is negative for every $p>1$.  If $m>2$ and
$p<m/(m-2)$, it is again negative.  Hence $I_R\rightarrow0$ and $u\equiv0$.

At the endpoint $p=m/(m-2)$, estimate \eqref{eq:IR-bound-new} is
uniform in $R$.  Since $\psi_R=1$ on $B_{R/2}$,
$$
 \int_{\mathbb{R}^d\times B_{R/2}}u^p\phi_0\,\mathrm{d}x
 \leq I_R\leq C,
$$
and monotone convergence yields
$u^p\phi_0\in L^1(\mathbb{R}^N)$.  Returning to the annular estimate
above and putting
$$
 J_R:=\int_{\mathbb{R}^d\times A_R}
 u^p\phi_0\psi_R\,\mathrm{d}x,
$$
we use $m(p-1)/p=2$ to obtain
$$
 I_R\leq C J_R^{1/p}\longrightarrow0,
$$
because $0\leq J_R\leq\int_{\mathbb{R}^d\times A_R}u^p\phi_0$
and the latter integral tends to zero.  Thus the endpoint also gives
$u\equiv0$.
\end{proof}

Theorem \ref{thm:nec-liou}\,(ii) follows from the preceding two theorems.  Moreover, if $m\leq4$ and
$0\leq u\in C^2\cap X$ solves $(H-d)u=g(u)$ with $g\geq0$, then $u\in L^2$ and Theorem \ref{thm:Lp-Liouville} applies with $q=2$.  This proves Theorem \ref{thm:nec-liou}\,(iii-1).

We now establish the new critical finite-energy result for the pure-power equation.  For $N=2$, every finite exponent is Sobolev-subcritical; the usual cutoff test first yields $u\in X$, local bootstrap gives a classical solution, and Theorem \ref{thm:Lp-Liouville} applies because $m=1$.  Hence only $N\geq3$ requires a new argument.

Set
\begin{equation}\label{crit:eq:phi0}
 \phi_0(y):=\pi^{-d/4}e^{-|y|^2/2},
 \qquad H_y\phi_0=d\phi_0,
 \qquad \|\phi_0\|_{L^2(\mathbb{R}^d)}=1.
\end{equation}
For $N\geq3$ we consider
\begin{equation}\label{crit:eq:model}
 -\Delta u+(|y|^2-d)u=u^p,
 \qquad 0\leq u\in H^1(\mathbb{R}^N),
\end{equation}
where a weak solution means
\begin{equation}\label{crit:eq:weak}
 \int\nabla u\cdot\nabla\varphi\,\mathrm{d}x
 +\int(|y|^2-d)u\varphi\,\mathrm{d}x
 =\int u^p\varphi\,\mathrm{d}x,
 \qquad \varphi\in C_c^\infty(\mathbb{R}^N).
\end{equation}

\begin{theorem}\label{crit:thm:main}
Let $N\geq3$ and $1\leq d\leq N-1$.  If
$$
 1\leq p\leq p_c:=\frac{N+2}{N-2},
$$
then \eqref{crit:eq:model} has only the trivial nonnegative $H^1(\mathbb{R}^N)$ weak solution.
\end{theorem}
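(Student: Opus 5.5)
The plan is to reduce everything to a sign contradiction between an anisotropic Pohozaev balance and a Gaussian second-moment inequality for the $y$-profile of $u$. The cases $p=1$ and $1<p\le m/(m-2)$ are already covered by Theorem \ref{thm:our-Liouville}, once $u$ is known to be classical and in $X$. So the real work is for general $p\in(1,p_c]$.

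\textbf{Step 0 (regularity).} Test \eqref{crit:eq:weak} with $\chi_L(y)^2u$ and use $p+1\le 2^*$ together with Sobolev to get $u\in X$. The identity
$$\int|\nabla u|^2+\int(|y|^2-d)u^2=\int u^{p+1}$$
then holds; call it the energy identity (E). A Brezis--Kato/Moser iteration gives $u\in L^\infty$, even at $p=p_c$ thanks to $u\in H^1$. Elliptic regularity gives $u\in C^2$, and the strong maximum principle gives $u>0$ unless $u\equiv0$. I would also derive decay: $u\to0$ at infinity, and a Gaussian-type bound in $y$ by comparison with $e^{-(1-\delta)|y|^2/2}$ on $\{|y|>R\}$. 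This bound is needed to justify the Pohozaev identities and the moving planes.

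\textbf{Step 1 (symmetry in $y$).} Run moving planes in each $y$-direction, exactly at $\lambda=-d$. Set $w_\mu=u_\mu-u$ on the half-space $\{y_1>\mu\}$, with $\mu\ge0$. It satisfies
$$-\Delta w_\mu+(|y|^2-d)w_\mu=c_\mu(x)\,w_\mu,\qquad 0\le c_\mu\le p\max(u,u_\mu)^{p-1},$$
where $|y|^2$ is computed for the original point (for $\mu\ge0$ the reflection only decreases $|y|$, which gives the favourable sign). The key point is the half-space oscillator gap. The Dirichlet oscillator $H_y$ on $\{y_1>\mu\}$ with $\mu\ge0$ has bottom $\ge d+2$, since the odd Hermite mode $h_1$ costs $3$ instead of $1$. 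Hence the quadratic form of $H-d$ on $w_\mu^-$ is at least $2\|w_\mu^-\|_2^2$. Because $c_\mu$ is small where $u$ is small, this gap absorbs $c_\mu$ on the relevant negative set: for $\mu$ large, and then by the usual continuity argument down to $\mu=0$. Combined with the $y$-reflection symmetry of the equation, this yields that $u$ is radial and decreasing in $|y|$ for each $z$.

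\textbf{Step 2 (Picone/annihilation step and the second-moment inequality).} Write $u=\phi_0 w$, so that
$$-\Delta w+2y\cdot\nabla_y w=\phi_0^{p-1}w^p.$$
Applying the annihilation operator, i.e.\ differentiating in $r=|y|$, the function $w_r$ satisfies a linear equation whose zero-order term is favourable and whose forcing term contains $\partial_r(\phi_0^{p-1})w^p\le0$. A maximum-principle/Picone argument then gives $w_r\le0$, so $u/\phi_0$ is nonincreasing in $|y|$.

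Now apply the Chebyshev correlation inequality under the Gaussian probability measure $\phi_0^2\,\mathrm dy$: $|y|^2$ is increasing in $r$ and $w^2$ is nonincreasing in $r$. Hence, for each $z$,
$$\int|y|^2u^2\,\mathrm dy\le\tfrac d2\int u^2\,\mathrm dy,$$
with equality only if $w(\cdot,z)$ is constant in $y$. Integrating in $z$ gives $2\int|y|^2u^2-d\int u^2\le0$.

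\textbf{Step 3 (Pohozaev).} Test the equation with $z\cdot\nabla_zu$ and with $y\cdot\nabla_yu$ (truncated, then let the truncations go, using Step 0's decay). Adding the two identities gives
$$\tfrac{N-2}{2}\int|\nabla u|^2+\tfrac N2\int(|y|^2-d)u^2+\int|y|^2u^2=\tfrac N{p+1}\int u^{p+1}.$$
Subtracting $\tfrac{N-2}{2}$ times (E) yields
$$2\int|y|^2u^2-d\int u^2=\Bigl(\tfrac N{p+1}-\tfrac{N-2}2\Bigr)\int u^{p+1}\ge0,$$
since $p\le p_c$. Comparing with Step 2, both sides vanish.

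\begin{itemize}
\item If $p<p_c$, the vanishing of the right-hand side forces $u\equiv0$.
\item If $p=p_c$, equality in Chebyshev forces $u=\phi_0(y)v(z)$. Substituting into the equation gives $-\Delta_zv=\phi_0^{p-1}v^p$ for every $y$, which is impossible unless $v\equiv0$, because $\phi_0^{p-1}$ is nonconstant.
\end{itemize}

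\textbf{Main obstacle.} I expect the hardest part to be Step 1 together with the rigour of Step 3. For Step 1, controlling $c_\mu$ by the gap $2$ requires smallness of $u$ on the negative set of $w_\mu$. This needs decay both in $|z|$ (where only $H^1$ information is available) and in $|y|$. I would first try to get a uniform smallness estimate from $u\to0$ at infinity, plus local compactness on bounded cylinders, which is the standard Gidas--Ni--Nirenberg/Li--Ni approach. For Step 3, the delicate point is justifying the Pohozaev identities at $p=p_c$. My first attempt would be to show that $|\nabla u|$ inherits a Gaussian bound in $y$ and integrable decay in $z$, so that the boundary terms of the truncations vanish.
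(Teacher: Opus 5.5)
Your overall architecture matches the paper's: regularity, moving planes in $y$ driven by the Dirichlet half-space gap $d+2$, monotonicity of $u/\phi_0$ in $|y|$, the Gaussian covariance inequality, and the anisotropic Pohozaev balance. Your summed Pohozaev identity is correct and gives $2B-dM=\frac{N+2-(N-2)p}{2(p+1)}\int u^{p+1}$. Two of your deviations are legitimate: you dispose of $p=1$ via Theorem~\ref{thm:our-Liouville}, and you close $p=p_c$ through the equality case $u=\phi_0(y)v(z)$, which is incompatible with $-\Delta_z v=\phi_0^{p-1}v^p$, instead of proving a strict moment inequality.

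\textbf{The gap: Step~2 as written fails.} This step is the actual crux. Differentiating $-\Delta w+2y\cdot\nabla_y w=\phi_0^{p-1}w^p$ in $y_i$ gives
\[
 -\Delta(\partial_{y_i}w)+2y\cdot\nabla_y(\partial_{y_i}w)
 +\bigl(2-pu^{p-1}\bigr)\partial_{y_i}w
 =-(p-1)\,y_i\,\phi_0^{p-1}w^p .
\]
Equivalently, $(\mathcal L+2)v_i=-(p-1)y_iu^p$ with $v_i:=\phi_0\partial_{y_i}w=(\partial_{y_i}+y_i)u$ and $\mathcal L=-\Delta+|y|^2-d-pu^{p-1}$. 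The zero-order coefficient $2-pu^{p-1}$ is \emph{not} favourable. Nothing keeps $pu^{p-1}$ below $2$ where $u$ is large, so at a positive maximum of $\partial_r w$ you get a contradiction only where $pu^{p-1}<2$. The phrase ``maximum-principle/Picone argument'' hides the missing idea: Picone is useless until you name a positive supersolution of $\mathcal L$ on the half-space.

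\textbf{The missing idea.} Take $q_i:=-\partial_{y_i}u$. It is strictly positive on $\Omega_i=\{y_i>0\}$ by Step~1; you need the strict version, from Hopf or the strong maximum principle, not merely ``decreasing''. It also satisfies $\mathcal Lq_i=2y_iu>0$ there. The Allegretto--Piepenbrink/Picone identity then gives
\[
 \int\bigl(|\nabla\eta|^2+(|y|^2-d-pu^{p-1})\eta^2\bigr)\geq0
 \qquad\text{for all }\eta\in H_0^1(\Omega_i)\cap L^2(\Omega_i,|y|^2\,\mathrm{d}x).
\]
Hence $\mathcal L+2\geq2$ on that space in form sense. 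Testing with $v_i^+$ yields $2\|v_i^+\|_2^2\leq-(p-1)\int_{\Omega_i}y_iu^pv_i^+\leq0$.

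\textbf{Functional setting.} This argument must be run on $v_i$, not on $\partial_r w$. The function $w=u/\phi_0$ is uncontrolled at infinity: your Gaussian bound only gives $w\lesssim e^{\delta|y|^2/2}$. By contrast, $v_i\in H_0^1(\Omega_i)\cap L^2(|y|^2\,\mathrm{d}x)$ follows from $u\in D(H)$, which gives $D^2u,\ |y|\nabla u,\ |y|^2u\in L^2$, together with $v_i=0$ on $\{y_i=0\}$ by evenness.

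\textbf{Your stated main obstacles are routine.} The continuation in Step~1 closes by small-measure Sobolev absorption against the gap $2$. The Pohozaev identities need only $u\in H^2_{\mathrm{loc}}$ and integrability of $|\nabla u|^2+(|y|^2+1)u^2+u^{p+1}$; no pointwise Gaussian bounds are required.
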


\subsection{Regularity and a half-space oscillator gap}\label{crit:sec:prelim}

\subsubsection{Regularity, decay, and the graph domain of
\texorpdfstring{$H$}{H}}

\begin{lemma}\label{crit:lem:regularity}
Let $1<p\leq p_c$, and let $0\leq u\in H^1(\R^N)$ be a weak solution
of \eqref{crit:eq:model}.  Then either $u\equiv0$, or
\begin{equation}\label{crit:eq:regularity-summary}
 u>0,\qquad
 u\in X\cap L^\infty(\R^N)\cap C^\infty(\R^N),
 \qquad \lim_{|x|\rightarrow\infty}u(x)=0.
\end{equation}
Moreover, $u^p\in L^2(\R^N)$, $u\in D(H)$, and
\begin{equation}\label{crit:eq:graph-regularity}
 D^2u,\quad |y|^2u,\quad |y|\nabla u\in L^2(\R^N).
\end{equation}
In particular, for $i=1,\ldots,d$,
\begin{equation}\label{crit:eq:qv-form-domain}
 \partial_{y_i}u,\quad (\partial_{y_i}+y_i)u
 \in H^1(\R^N)\cap L^2(\R^N,|y|^2\,\mathrm{d} x).
\end{equation}
\end{lemma}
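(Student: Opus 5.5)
The plan is to follow the standard chain: energy test $\Rightarrow$ weighted integrability $\Rightarrow$ Brezis--Kato and Moser $\Rightarrow$ positivity and smoothness $\Rightarrow$ a graph-norm identity for $H$. Assume $u\not\equiv0$ throughout.

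\emph{Step 1: $u\in X$.} Test \eqref{crit:eq:weak} with $\varphi=\zeta_R^2u$, where $\zeta_R(x)=\zeta(x/R)$ is a standard cutoff. This test is admissible by density, since $u\in H^1$ and $|y|^2\zeta_R^2$ is bounded. It gives
$$
 \int\zeta_R^2\bigl(|\nabla u|^2+|y|^2u^2\bigr)
 \le d\|u\|_2^2+\|u\|_{p+1}^{p+1}+C R^{-1}\|u\|_2\|\nabla u\|_2 .
$$
Because $2\le p+1\le 2^*$, Sobolev's inequality bounds $\|u\|_{p+1}$. Monotone convergence as $R\to\infty$ then yields $\int|y|^2u^2<\infty$, i.e.\ $u\in X$.

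\emph{Step 2: boundedness and decay.} Since $|y|^2u\ge0$, $u$ is a weak subsolution:
$$
 -\Delta u\le \bigl(d+a(x)\bigr)u,\qquad a:=u^{p-1}\in L^{N/2}(\R^N).
$$
The potential $a$ lies in $L^{N/2}$ because $u\in L^{2^*}$ and $p-1\le 4/(N-2)$. The Brezis--Kato argument, with test functions $u\min\{u,k\}^{2s}\zeta^2$ and using the favorable sign of $|y|^2$ to drop that term, gives $u\in L^q(\R^N)$ for every $q\in[2,\infty)$. Then $a\in L^{r}$ with some $r>N/2$, uniformly on unit balls. Local Moser iteration for subsolutions gives
$$
 \sup_{B_1(x_0)}u\le C\|u\|_{L^2(B_2(x_0))},
$$
with $C$ independent of $x_0$. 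This yields $u\in L^\infty$ and, since $\|u\|_{L^2(B_2(x_0))}\to0$, also $u(x)\to0$ as $|x|\to\infty$.

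\emph{Step 3: positivity and smoothness.} Locally the equation is $-\Delta u=f$ with $f=(d-|y|^2)u+u^p\in L^\infty_{\rm loc}$. Hence $u\in W^{2,q}_{\rm loc}\subset C^{1,\alpha}_{\rm loc}$, and then $u^p\in C^{0,\alpha}_{\rm loc}$ for $p\ge1$. Schauder theory gives $u\in C^{2,\alpha}_{\rm loc}$. Now $-\Delta u+(|y|^2-d)^+u\ge0$ locally, so the strong maximum principle gives $u>0$. Consequently $s\mapsto s^p$ is smooth on the range of $u$, and bootstrapping gives $u\in C^\infty$.

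\emph{Step 4: graph regularity.} Since $u\in L^2\cap L^\infty$, we have $u^{2p}\le\|u\|_\infty^{2p-2}u^2$, so $u^p\in L^2$. Then
$$
 Hu=du+u^p\in L^2\quad\text{in }\mathcal D',
$$
and together with $u\in X$ this gives $u\in D(H)$ by \eqref{eq:operator-domain-H}. For \eqref{crit:eq:graph-regularity} we use the identity, valid for $w\in C_c^\infty(\R^N)$,
$$
 \|Hw\|_2^2=\|\Delta w\|_2^2+\||y|^2w\|_2^2+2\||y|\nabla w\|_2^2-2d\|w\|_2^2 .
$$
It comes from expanding the square and computing the cross term $\int\nabla w\cdot\nabla(|y|^2w)=\int|y|^2|\nabla w|^2-d\int w^2$. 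Since $|y|^2\ge0$ is smooth, $C_c^\infty$ is an operator core for $H$. Approximating $u$ in graph norm and applying the identity to differences $w_n-w_k$ gives the Cauchy property in each seminorm. Therefore $\Delta u$, $|y|^2u$ and $|y|\nabla u$ all lie in $L^2$, and $\|D^2u\|_2=\|\Delta u\|_2$ by Plancherel. If one prefers to avoid the core argument, apply the identity to $\zeta_R u$ directly, using the smoothness of $u$ and controlling commutator terms by Step 1.

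\emph{Consequence \eqref{crit:eq:qv-form-domain}.} For $\partial_{y_i}u$: it lies in $H^1$ because $D^2u\in L^2$, and in $L^2(|y|^2\mathrm dx)$ because $|y|\nabla u\in L^2$. For $y_iu$: we have $\nabla(y_iu)=e_iu+y_i\nabla u\in L^2$, and $|y|\,|y_iu|\le|y|^2u\in L^2$.

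The main obstacle is Step 2. At $p=p_c$ the $L^\infty$ bound cannot come from plain bootstrap; it genuinely needs the Brezis--Kato smallness of $u^{p-1}$ in $L^{N/2}$ on small sets, together with uniformity of the Moser constants in the presence of the unbounded coefficient $|y|^2$. Passing to a subsolution inequality that discards $|y|^2u$ is the device that handles this uniformity.
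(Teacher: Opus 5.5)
Your proposal is correct and follows the paper's proof essentially step for step: a cutoff test giving $u\in X$, Brezis--Kato on the subsolution $-\Delta u\le(d+u^{p-1})u$, a translation-uniform local $L^\infty$ bound giving boundedness and decay, elliptic bootstrap with the strong maximum principle, and the oscillator graph identity extended to $u\in D(H)$ by graph-norm approximation. For the last step you invoke essential self-adjointness on $C_c^\infty$, whereas the paper builds Schwartz approximants from the Hermite--Fourier decomposition; the two justifications are equivalent. One small fix: for $1<p<1+4/N$ the claim $u^{p-1}\in L^{N/2}(\mathbb{R}^N)$ does not follow from $u\in L^2\cap L^{2^*}$, so split on $\{u\le1\}$ and $\{u>1\}$ as the paper does to get $u^{p-1}\in L^{N/2}+L^\infty$, which the Brezis--Kato iteration handles equally well.
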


\begin{proof}
Because $p+1\leq2^*$, the Sobolev embedding gives
$u^{p+1}\in L^1(\R^N)$.  Let $\eta_R\in C_c^\infty(\R^N)$ satisfy
$0\leq\eta_R\leq1$, $\eta_R=1$ in $B_R$, $\eta_R=0$ outside
$B_{2R}$, and $|\nabla\eta_R|\leq C/R$.  The admissible test
$\eta_R^2u$ can be obtained from compactly supported smooth tests by
the usual local $H^1$ approximation.  Equation \eqref{crit:eq:weak} gives
\begin{align}
 &\int\eta_R^2\bigl(|\nabla u|^2+|y|^2u^2\bigr)\,\mathrm{d} x \notag\\
 &\qquad=d\int\eta_R^2u^2\,\mathrm{d} x
       +\int\eta_R^2u^{p+1}\,\mathrm{d} x
       -2\int\eta_Ru\nabla u\cdot\nabla\eta_R\,\mathrm{d} x.\label{crit:eq:cutoff-energy}
\end{align}
Young's inequality bounds the last term in absolute value by
$$
 \frac12\int\eta_R^2|\nabla u|^2\,\mathrm{d} x
 +2\int u^2|\nabla\eta_R|^2\,\mathrm{d} x.
$$
Thus the left-hand side of \eqref{crit:eq:cutoff-energy}, with one half of
the gradient term, is bounded independently of $R$.  Fatou's lemma
shows that $\int |y|^2u^2<\infty$, hence $u\in X$.  In fact, the
cutoff error satisfies
$$
 \left|2\int\eta_Ru\nabla u\cdot\nabla\eta_R\,\mathrm{d} x\right|
 \leq\frac{C}{R}
 \|u\|_{L^2(B_{2R}\setminus B_R)}
 \|\nabla u\|_{L^2(B_{2R}\setminus B_R)}\longrightarrow0.
$$
The other terms in \eqref{crit:eq:cutoff-energy} converge by dominated
convergence (and by the just-proved $|y|u\in L^2$).  Thus letting
$R\rightarrow\infty$ is equivalent to testing the equation with $u$ and yields
the Nehari identity
\begin{equation}\label{crit:eq:nehari-early}
 \int_{\R^N}\bigl(|\nabla u|^2+|y|^2u^2-du^2\bigr)\,\mathrm{d} x
 =\int_{\R^N}u^{p+1}\,\mathrm{d} x.
\end{equation}

Since $u\geq0$, equation \eqref{crit:eq:model} implies in distributions
\begin{equation}\label{crit:eq:subsolution}
 -\Delta u=(d-|y|^2)u+u^p\leq (d+u^{p-1})u.
\end{equation}
The coefficient in the last expression has precisely the integrability
needed for the Brezis--Kato argument.  Indeed, if $p=p_c$, then
$$
 u^{p-1}\in L^{N/2}(\mathbb{R}^N),
$$
because $(p-1)N/2=2^*$.  If $p<p_c$, then
$2^*/(p-1)>N/2$; equivalently, after splitting into $\{u>1\}$ and
$\{u\leq1\}$, one has
$$
 u^{p-1}\in L^{N/2}(\mathbb{R}^N)+L^\infty(\mathbb{R}^N).
$$
Consequently $d+u^{p-1}\in L^{N/2}+L^\infty$.  The Brezis--Kato
iteration \cite{BrezisKato1979}, in the critical-growth form used in
\cite[Theorem 2.3]{BL-CMP}, therefore yields
\begin{equation}\label{crit:eq:all-finite-Lq}
 u\in L^q(\mathbb{R}^N)\qquad\text{for every finite }q\geq2.
\end{equation}
The same conclusion holds for the subsolution \eqref{crit:eq:subsolution},
because the iteration tests only with nonnegative truncations and powers of
$u$, so its inequality direction is preserved.

Choose $r>\max\{N/2,2\}$.  By \eqref{crit:eq:all-finite-Lq},
$u\in L^{pr}$; interpolating with $u\in L^2$ also gives $u\in L^r$.
Hence
$$
 f:=du+u^p\in L^r(\mathbb{R}^N).
$$
The standard translation-invariant local estimate for nonnegative Poisson
subsolutions, applied to $-\Delta u\leq f$, gives
\begin{equation}\label{crit:eq:local-subsolution-estimate}
 \|u\|_{L^\infty(B_1(x_0))}
 \leq C\left(\|u\|_{L^2(B_2(x_0))}
       +\|f\|_{L^r(B_2(x_0))}\right),
 \qquad x_0\in\mathbb{R}^N,
\end{equation}
where $C$ is independent of $x_0$.  Since $u\in L^2$ and
$f\in L^r$, the right-hand side is uniformly bounded; moreover it tends
to zero as $|x_0|\rightarrow\infty$, because both corresponding global tail
norms tend to zero.  We have proved
\begin{equation}\label{crit:eq:Linfty-decay}
 u\in L^\infty(\R^N),\qquad u(x)\longrightarrow0
 \quad\text{as }|x|\rightarrow\infty.
\end{equation}

Let $K\Subset K'\Subset\R^N$.  On $K'$ the function
$(d-|y|^2)u+u^p$ is bounded.  Interior $W^{2,q}$ estimates for every
$q<\infty$, followed by the embedding $W^{2,q}\hookrightarrow
C^{1,\alpha}$ for $q>N$, give $u\in C^{1,\alpha}(K)$.  The right-hand
side is then locally H\"older continuous, so the Schauder estimate
gives $u\in C^{2,\alpha}(K)$.  If $u\not\equiv0$, the strong maximum
principle applied to
$-\Delta u+(|y|^2-d-u^{p-1})u=0$ gives $u>0$.  On each compact set the positive
function $u$ is bounded away from zero; since $s\mapsto s^p$ is smooth
on $(0,\infty)$, a further local bootstrap gives
$u\in C^\infty(\R^N)$.  These standard regularity and maximum
principle steps may also be found in
\cite[Chapters 3 and 7]{Chen-Li-Book} and
\cite[Chapters 8--9]{GilbargTrudinger2001}.

Boundedness and $u\in L^2$ imply
\begin{equation}\label{crit:eq:up-L2}
 \|u^p\|_2\leq\|u\|_\infty^{p-1}\|u\|_2<\infty.
\end{equation}
The closed quadratic form of $H$ has domain $X$.  Since
$$
 \mathfrak{h}[u,\varphi]
 :=\int(\nabla u\cdot\nabla\varphi+|y|^2u\varphi)\,\mathrm{d} x
 =\int(du+u^p)\varphi\,\mathrm{d} x
$$
for $\varphi\in C_c^\infty$, density in $X$ and the first
representation theorem show that $u\in D(H)$ and
$Hu=du+u^p\in L^2$.  We use here the standard form realization of
Schr\"odinger operators; see \cite[Chapter VIII]{Reed-Simon-1980}.

For $f\in C_c^\infty(\R^N)$, integration by parts gives the exact
oscillator graph identity
\begin{equation}\label{crit:eq:graph-identity}
 \|Hf\|_2^2
 =\|\Delta f\|_2^2+\||y|^2f\|_2^2
   +2\||y|\nabla f\|_2^2-2d\|f\|_2^2.
\end{equation}
Indeed,
\begin{align*}
 \operatorname{Re}\int(-\Delta f)|y|^2\overline f\,\mathrm{d} x
 &=\operatorname{Re}\int\nabla f\cdot
       \nabla(|y|^2\overline f)\,\mathrm{d} x\\
 &=\int|y|^2|\nabla f|^2\,\mathrm{d} x
   +\frac12\int\nabla(|y|^2)\cdot\nabla|f|^2\,\mathrm{d} x\\
 &=\int|y|^2|\nabla f|^2\,\mathrm{d} x
   -\frac12\int\Delta(|y|^2)|f|^2\,\mathrm{d} x\\
 &=\int |y|^2|\nabla f|^2\,\mathrm{d} x-d\int|f|^2\,\mathrm{d} x.
\end{align*}
Here $\Delta|y|^2=2d$.  Substitution into
$\|(-\Delta+|y|^2)f\|_2^2$ proves
\eqref{crit:eq:graph-identity}.
We now justify carefully the passage from test functions to $D(H)$.  Under
the unitary transform $\mathcal U$ in \eqref{eq:direct-sum-spectral}, convergence in the graph norm of $H$ is equivalent to convergence with respect to
$$
 \sum_\alpha\int_{\mathbb{R}^m}
 \bigl(1+q_\alpha(\xi)^2\bigr)|F_\alpha(\xi)|^2\,\mathrm d\xi,
 \qquad q_\alpha(\xi)=d+2|\alpha|+|\xi|^2.
$$
Truncating first the Hermite index and the Fourier variable, and then
approximating the finitely many remaining components in the corresponding
weighted $L^2$ spaces by functions in $C_c^\infty(\mathbb{R}^m)$, we obtain
$f_n\in\mathcal S(\mathbb{R}^N)$ such that
$$
 f_n\longrightarrow u,
 \qquad Hf_n\longrightarrow Hu
 \quad\text{in }L^2(\mathbb{R}^N).
$$
Identity \eqref{crit:eq:graph-identity} is valid for Schwartz functions.
Applying it to $f_n-f_k$ shows that $\Delta f_n$, $|y|^2f_n$, and
$|y|\nabla f_n$ are Cauchy in $L^2$.  Their limits agree, in the sense of
distributions, with $\Delta u$, $|y|^2u$, and $|y|\nabla u$, respectively.
Thus \eqref{crit:eq:graph-identity} extends to every $u\in D(H)$.
It follows that
$\Delta u$, $|y|^2u$, and $|y|\nabla u$ belong to $L^2$.
The Fourier identity
$\sum_{j,k}\|\partial_{jk}u\|_2^2=\|\Delta u\|_2^2$ then gives
$D^2u\in L^2$.  Hence $\partial_{y_i}u\in H^1$, while
$|y|\nabla u\in L^2$ gives
$|y|\partial_{y_i}u\in L^2$.  Furthermore,
$$
 \nabla(y_i u)=e_i u+y_i\nabla u\in L^2,
 \qquad |y|y_i u\in L^2
$$
by $|y|\nabla u,|y|^2u\in L^2$.  These elementary consequences prove
\eqref{crit:eq:qv-form-domain} for both $\partial_{y_i}u$ and
$(\partial_{y_i}+y_i)u$.
\end{proof}

\subsubsection{The missing coercivity at the zero-mass threshold}

For $\mu\leq0$, use the notation used in the moving-plane argument below:
\begin{equation}\label{crit:eq:halfspace}
 \Sigma_\mu:=\{x=(y,z)\in\R^d\times\R^m:y_1<\mu\},
 \qquad T_\mu:=\partial\Sigma_\mu.
\end{equation}

\begin{lemma}\label{crit:lem:halfspace-gap}
For $0\leq\varepsilon<1$, $\mu\leq0$, and
$\eta\in H_0^1(\Sigma_\mu)$ with $|y|\eta\in L^2(\Sigma_\mu)$,
\begin{equation}\label{crit:eq:halfspace-gap}
 \int_{\Sigma_\mu}
 \bigl((1-\varepsilon)|\nabla\eta|^2+|y|^2\eta^2\bigr)\,\mathrm{d} x
 \geq(d+2)\sqrt{1-\varepsilon}
       \int_{\Sigma_\mu}\eta^2\,\mathrm{d} x.
\end{equation}
\end{lemma}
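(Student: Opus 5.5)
Since the $z$-variables and the $y_2,\dots,y_d$ directions play no role in the constraint $y_1<\mu$, the plan is to separate variables and reduce everything to a one-dimensional statement about the harmonic oscillator on a half-line. First we rescale: with $\eta\in H_0^1(\Sigma_\mu)$ and $|y|\eta\in L^2$, we drop the nonnegative term $\varepsilon'$-part in $z$ by noting $(1-\varepsilon)|\nabla_z\eta|^2\geq0$. It therefore suffices to prove, for almost every $z$ and after integrating in $z$ (Fubini, using that $\eta(\cdot,z)\in H_0^1(\{y_1<\mu\})$ with $|y|\eta(\cdot,z)\in L^2$ for a.e.\ $z$ by the usual density of $C_c^\infty(\Sigma_\mu)$ functions in the weighted form norm),
$$
 \int_{\{y_1<\mu\}}\bigl((1-\varepsilon)|\nabla_y w|^2+|y|^2w^2\bigr)\,\mathrm{d}y\geq (d+2)\sqrt{1-\varepsilon}\int w^2\,\mathrm{d}y .
$$
Writing $|y|^2=y_1^2+|y'|^2$ and $|\nabla_yw|^2=|\partial_{y_1}w|^2+|\nabla_{y'}w|^2$, the $y'$-part is bounded below, for each fixed $y_1$, by the ground-state energy $(d-1)\sqrt{1-\varepsilon}$ of $-(1-\varepsilon)\Delta_{y'}+|y'|^2$ on $\mathbb{R}^{d-1}$ (Remark \ref{remark:20260518-1736} with $d-1$ in place of $d$; vacuous when $d=1$). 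So the whole claim reduces to the one-dimensional gap
$$
 \int_{-\infty}^{\mu}\bigl((1-\varepsilon)|f'|^2+t^2f^2\bigr)\,\mathrm{d}t\geq 3\sqrt{1-\varepsilon}\int_{-\infty}^{\mu}f^2\,\mathrm{d}t,\qquad f\in H_0^1(-\infty,\mu),
$$
since $(d-1)+3=d+2$.

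For the one-dimensional gap, scaling $t=(1-\varepsilon)^{1/4}s$ turns the operator into $\sqrt{1-\varepsilon}\,(-\mathrm{d}^2/\mathrm{d}s^2+s^2)$ on the half-line $(-\infty,\mu(1-\varepsilon)^{-1/4})$, whose endpoint is still $\leq0$. So it suffices to treat $\varepsilon=0$ and an arbitrary $\mu\leq0$. For $\mu=0$: extending $f$ oddly to $\mathbb{R}$ gives an odd function in the form domain of $-\mathrm{d}^2/\mathrm{d}t^2+t^2$, orthogonal to all even Hermite functions, in particular to $h_0$; hence its Rayleigh quotient is at least the next eigenvalue $3$, and the odd extension doubles both sides of the inequality. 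For $\mu<0$: extend $f$ by zero to $(-\infty,0)$; this is an element of $H_0^1(-\infty,0)$, so the $\mu=0$ case applies directly (domain monotonicity of Dirichlet forms). Thus no separate work is needed for $\mu<0$.

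The only genuine point requiring care is the functional-analytic justification: that the odd extension of $f\in H_0^1(-\infty,0)$ with $tf\in L^2$ lies in $Q(-\partial_t^2+t^2)$ and that the Hermite expansion of an odd function contains only odd indices (immediate from parity of $h_k$), plus the Fubini/density step to pass from smooth compactly supported $\eta$ to the stated class. I would prove the inequality first for $\eta\in C_c^\infty(\Sigma_\mu)$, where all manipulations are classical, and then extend by density in the norm $\|\nabla\eta\|_2+\||y|\eta\|_2+\|\eta\|_2$, both sides being continuous in that norm. I expect this density step to be routine (cutoffs in $|x|$ and mollification keep the support in $\Sigma_\mu$ after a slight inward shift), so the main obstacle is really just organising the separation of variables cleanly.
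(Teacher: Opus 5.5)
Your proposal is correct and follows essentially the same route as the paper. You discard the $z$-gradient term, bound each transverse direction $y_2,\dots,y_d$ below by the full-space ground-state value $\sqrt{1-\varepsilon}$, and reduce to the one-dimensional Dirichlet half-line gap $3\sqrt{1-\varepsilon}$ via scaling, odd reflection with the Hermite expansion at $\mu=0$, and domain monotonicity for $\mu<0$, finishing with density from $C_c^\infty(\Sigma_\mu)$. The only difference is that you treat $y'$ as one $(d-1)$-dimensional oscillator rather than summing one-dimensional bounds coordinate by coordinate, which is equivalent.
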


\begin{proof}
Put $a=1-\varepsilon$.  The full-line one-dimensional oscillator
$-a\,\mathrm{d}^2/\mathrm{d}t^2+t^2$ has eigenvalues
$(2k+1)\sqrt a$, $k=0,1,\ldots$.  Indeed, the change
$t=a^{1/4}s$ transforms it into
$$
 \sqrt a\left(-\frac{\mathrm{d}^2}{\mathrm{d}s^2}+s^2\right).
$$
On $(-\infty,0)$ with a Dirichlet
condition at zero, its first eigenfunction is the restriction of the
first odd Hermite function and its first eigenvalue is $3\sqrt a$.
This follows either by odd reflection and the Hermite expansion, or by
the one-dimensional Sturm oscillation theorem.  Domain monotonicity
therefore gives, for every $h\in H_0^1(-\infty,\mu)$,
\begin{equation}\label{crit:eq:1d-gap}
 \int_{-\infty}^{\mu}\bigl(a|h'|^2+t^2h^2\bigr)\,\mathrm{d} t
 \geq3\sqrt a\int_{-\infty}^{\mu}h^2\,\mathrm{d} t,
 \qquad \mu\leq0.
\end{equation}
For each of the remaining coordinates $y_j$, $2\leq j\leq d$, the
full-line ground-state inequality is
$$
 \int_{\R}\bigl(a|h'|^2+t^2h^2\bigr)\,\mathrm{d} t
 \geq\sqrt a\int_{\R}h^2\,\mathrm{d} t.
$$
Apply \eqref{crit:eq:1d-gap} to the $y_1$ slices of
$\eta\in C_c^\infty(\Sigma_\mu)$, apply this full-line inequality to
each $y_j$ slice, sum over $j$, and retain the nonnegative quantity
$a\int|\nabla_z\eta|^2$.  Fubini's theorem gives
\begin{align*}
 \int_{\Sigma_\mu}
 \bigl(a|\nabla\eta|^2+|y|^2\eta^2\bigr)\,\mathrm{d} x
 &\geq\bigl(3+(d-1)\bigr)\sqrt a
       \int_{\Sigma_\mu}\eta^2\,\mathrm{d} x\\
 &=(d+2)\sqrt a\int_{\Sigma_\mu}\eta^2\,\mathrm{d} x.
\end{align*}
Density proves the same statement on the asserted form domain.  This
is \eqref{crit:eq:halfspace-gap}.  This is consistent with the Hermite--Fourier decomposition in Section \ref{sec:Spectral}; see also \cite[Chapter VIII]{Reed-Simon-1980}.
\end{proof}

\subsection{Moving planes at \texorpdfstring{$\lambda=-d$}{lambda=-d}}
\label{crit:sec:moving-planes}

The global bottom of the spectrum of $H-d$ is zero, which is why the
coercivity used for $\lambda>-d$ disappears at the endpoint.
Lemma \ref{crit:lem:halfspace-gap} supplies a gap of size two on every
half-space encountered before the plane reaches the origin.

\begin{proposition}\label{crit:prop:y-symmetry}
Let $1<p\leq p_c$, and let $u$ be a nonzero nonnegative weak solution
of \eqref{crit:eq:model}.  Then there is a function $U$ such that
\begin{equation}\label{crit:eq:radial-u}
 u(y,z)=U(|y|,z),
 \qquad \partial_rU(r,z)<0
 \quad(r>0,\ z\in\R^m).
\end{equation}
\end{proposition}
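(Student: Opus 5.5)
We run the moving-plane method in the direction $y_1$, using Lemma \ref{crit:lem:halfspace-gap} in place of the pointwise positivity of the zero-order coefficient. By Lemma \ref{crit:lem:regularity}, $u>0$ is smooth, lies in $X\cap L^\infty$, and tends to $0$ at infinity.

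For $\mu\leq0$ we set $x^\mu:=(2\mu-y_1,y_2,\dots,y_d,z)$, $u_\mu(x):=u(x^\mu)$ and $w_\mu:=u_\mu-u$ on $\Sigma_\mu$. Since $|y^\mu|^2-|y|^2=4\mu(\mu-y_1)\geq0$ on $\Sigma_\mu$, subtracting the two equations gives
$$
 -\Delta w_\mu+(|y|^2-d)w_\mu\ \geq\ c_\mu w_\mu-4\mu(\mu-y_1)u_\mu
 \quad\text{on }\{w_\mu<0\}\cap\Sigma_\mu ,
$$
where $c_\mu:=(u_\mu^p-u^p)/(u_\mu-u)\geq0$. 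The last term has the favorable sign there. We test with $w_\mu^-=\min\{w_\mu,0\}$, suitably cut off in $z$ and $y$ and then passed to the limit using $u\in X$ and the decay of $u$. This gives
$$
 \int_{\Sigma_\mu}\bigl(|\nabla w_\mu^-|^2+|y|^2(w_\mu^-)^2\bigr)\,\mathrm dx
 \leq\int_{\Sigma_\mu}(d+c_\mu)(w_\mu^-)^2\,\mathrm dx .
$$
Lemma \ref{crit:lem:halfspace-gap} with $\varepsilon=0$ bounds the left side from below by $(d+2)\int(w_\mu^-)^2$. Hence
$$
 2\int_{\Sigma_\mu}(w_\mu^-)^2\leq\int_{\Sigma_\mu}c_\mu(w_\mu^-)^2 .
$$
On $\{w_\mu<0\}$ we have $c_\mu\leq p\,u^{p-1}$.

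To start the plane, we note that $u\to0$ at infinity, so $u$ is small on $\Sigma_\mu$ for $\mu\ll-1$. Then $c_\mu<2$ on the negative set, and therefore $w_\mu^-\equiv0$. The gap of size two is exactly what replaces positivity of $\lambda+d$.

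To continue, let $\mu_0:=\sup\{\mu\leq0:\ w_\nu\geq0\text{ on }\Sigma_\nu\ \forall\nu\leq\mu\}$ and suppose $\mu_0<0$. By the strong maximum principle, $w_{\mu_0}>0$ in $\Sigma_{\mu_0}$; it cannot vanish identically, since $|y^{\mu_0}|^2>|y|^2$ makes the inequality strict. For $\mu$ slightly larger than $\mu_0$, the negative set of $w_\mu$ then concentrates near infinity or near $T_\mu$. To push the plane further we will combine a small-negative-set estimate with the $\varepsilon$-deformed gap. We split $\int c_\mu(w_\mu^-)^2$ over a large compact piece $K$ and its complement. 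Outside $K$, $u$ is small and we absorb as in the start. On $K$, the Lebesgue measure of $\{w_\mu<0\}\cap K$ tends to $0$ as $\mu\downarrow\mu_0$, so Hölder and Sobolev give a bound $o(1)\|\nabla w_\mu^-\|_2^2$. This term is absorbed by the $\varepsilon$-portion of the gradient, using Lemma \ref{crit:lem:halfspace-gap} with $\varepsilon>0$ small so that $(d+2)\sqrt{1-\varepsilon}>d$. At $p=p_c$ we use that $u^{p-1}\in L^{N/2}$ with small tails, which is the critical-growth point where the argument must be done carefully. This contradiction shows $\mu_0=0$, so $u(-y_1,\dots)\geq u(y_1,\dots)$ for $y_1<0$. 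Reflecting $y_1\mapsto-y_1$ gives the reverse inequality, and hence $u$ is even in $y_1$.

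Since $H$ and the equation are invariant under rotations in $y$, we repeat the argument in every direction $e\in\mathbb S^{d-1}$. This yields $u(y,z)=U(|y|,z)$. Strict monotonicity follows from $w_\mu>0$ in $\Sigma_\mu$ for every $\mu<0$, together with the Hopf lemma on $T_\mu$: $\partial_{y_1}w_\mu=-2\partial_{y_1}u>0$ on $T_\mu$, so $\partial_{y_1}u>0$ for $y_1<0$, which means $\partial_rU<0$ for $r>0$.

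The main obstacle is the continuation step at the critical exponent. Noncompactness in $z$ prevents the usual compactness of the negative sets, so the smallness must come from the uniform decay of $u$ and the smallness of the $L^{N/2}$ tails rather than from a compactness argument. We will handle this by a uniform-in-$z$ version of the "narrow region plus decay" estimate.
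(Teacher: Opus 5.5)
Your proposal is correct and follows the paper's proof essentially step by step. The common steps are: the half-space gap of Lemma \ref{crit:lem:halfspace-gap} with $\varepsilon=0$ to start the plane; a split into a ball (small negative set, H\"older--Sobolev) and its complement (pointwise smallness of $p\psi_\mu^{p-1}\le pu^{p-1}$), both absorbed by the $\varepsilon$-deformed gap; reflection and rotations for radial symmetry; and Hopf for strict monotonicity.

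Two sign slips should be fixed, though neither affects your displayed inequality or final conclusions. On $\Sigma_\mu$ one has $|y^\mu|^2-|y|^2=4\mu(\mu-y_1)\le 0$, so it is $|y|^2>|y^{\mu_0}|^2$ that rules out $w_{\mu_0}\equiv0$. Hopf gives $\partial_{y_1}w_\mu=-2\partial_{y_1}u<0$ on $T_\mu$, not $>0$.

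The extra care you anticipate at $p=p_c$ is unnecessary. Lemma \ref{crit:lem:regularity} already gives $u\in L^\infty$ with $u\to0$ at infinity in all of $\mathbb{R}^N$, so no $L^{N/2}$-tail or uniform-in-$z$ argument is needed.
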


\begin{proof}
By Lemma \ref{crit:lem:regularity}, $u>0$, $u\in C^2\cap X$, and
$u(x)\rightarrow0$ as $|x|\rightarrow\infty$.  For $\mu\leq0$, let
\begin{equation}\label{crit:eq:reflection}
 x^\mu=(2\mu-y_1,y_2,\ldots,y_d,z),\qquad
 u_\mu(x)=u(x^\mu),\qquad w_\mu=u_\mu-u.
\end{equation}
For $x\in\Sigma_\mu$,
\begin{equation}\label{crit:eq:potential-reflection}
 |y|^2-|y^\mu|^2=4\mu(y_1-\mu)\geq0.
\end{equation}
Subtracting the equation for $u$ from the reflected equation and
using \eqref{crit:eq:potential-reflection}, we first have the exact identity
$$
 -\Delta w_\mu+(|y|^2-d)w_\mu
 =u_\mu^p-u^p+(|y|^2-|y^\mu|^2)u_\mu.
$$
The last term is nonnegative, while
$u_\mu^p-u^p=p\psi_\mu^{p-1}(u_\mu-u)$.  Hence
\begin{equation}\label{crit:eq:w-ineq}
 -\Delta w_\mu+
 \bigl(|y|^2-d-p\psi_\mu^{p-1}\bigr)w_\mu\geq0
 \quad\text{in }\Sigma_\mu,\qquad w_\mu=0\quad\text{on }T_\mu,
\end{equation}
where $\psi_\mu(x)$ lies between $u_\mu(x)$ and $u(x)$.  More
explicitly, when $u_\mu\ne u$ it is defined by
$$
 p\psi_\mu^{p-1}
 =\frac{u_\mu^p-u^p}{u_\mu-u};
$$
at equality it may be set equal to $u$.  Notice that on
$\{w_\mu<0\}$ one has
\begin{equation}\label{crit:eq:psi-upper}
 0<u_\mu<\psi_\mu<u.
\end{equation}
For fixed $\mu$, the negative part
$w_\mu^-:=\min\{w_\mu,0\}$ belongs to
$H_0^1(\Sigma_\mu)$ and $|y|w_\mu^-\in L^2$.  This follows from
$u\in X$ after the change of variables $x\mapsto x^\mu$, using
$|y|^2\leq2|y^\mu|^2+8\mu^2$ on the reflected half-space.

\smallskip
\noindent\emph{Step 1: starting the plane.}
Choose $R$ so large that
\begin{equation}\label{crit:eq:start-small}
 p\,u(x)^{p-1}<1\qquad (|x|>R).
\end{equation}
If $\mu<-R$, then $\Sigma_\mu\subset\{|x|>R\}$.  In the weak
supersolution inequality use the admissible nonnegative test
$-w_\mu^-$.  Multiplying the resulting inequality by $-1$ and using
\eqref{crit:eq:psi-upper}, \eqref{crit:eq:start-small}, and Lemma
\ref{crit:lem:halfspace-gap} with $\varepsilon=0$, we get
\begin{align*}
 0&\geq\int_{\{w_\mu<0\}}
 \left(|\nabla w_\mu^-|^2+
       (|y|^2-d-p\psi_\mu^{p-1})(w_\mu^-)^2\right)\,\mathrm{d} x\\
 &\geq\bigl((d+2)-d-1\bigr)\|w_\mu^-\|_2^2.
\end{align*}
Hence $w_\mu^-\equiv0$ and $w_\mu\geq0$ for all sufficiently
negative $\mu$.

\smallskip
\noindent\emph{Step 2: continuation to the origin.}
Set
\begin{equation}\label{crit:eq:mu0}
 \mu_0:=\sup\{\bar\mu\leq0:
 w_\mu\geq0\text{ in }\Sigma_\mu
 \text{ for every }\mu\leq\bar\mu\}.
\end{equation}
Continuity gives $w_{\mu_0}\geq0$.  Suppose for contradiction that
$\mu_0<0$.  The function $w_{\mu_0}$ is not identically zero.  Indeed,
otherwise $u_{\mu_0}=u$, and subtracting their exact equations would
give
$$
 (|y|^2-|y^{\mu_0}|^2)u=0\quad\text{in }\Sigma_{\mu_0},
$$
contrary to $u>0$ and \eqref{crit:eq:potential-reflection}.  The strong
maximum principle and Hopf boundary lemma therefore give
\begin{equation}\label{crit:eq:strict-w0}
 w_{\mu_0}>0\quad\text{in }\Sigma_{\mu_0},
 \qquad \partial_\nu w_{\mu_0}<0\quad\text{on }T_{\mu_0}.
\end{equation}
Their use is legitimate although the zero-order coefficient in
\eqref{crit:eq:w-ineq} has no global sign: on every bounded set it is
bounded, and replacing it by its positive part preserves the
supersolution inequality for $w_{\mu_0}\geq0$.  See
\cite[Theorem 7.3.3]{Chen-Li-Book}.

By the definition of $\mu_0$, there are arbitrarily small
$\delta>0$, with $\delta<|\mu_0|$, for which
\begin{equation}\label{crit:eq:negative-nonempty}
 \{w_{\mu_0+\delta}<0\}\cap\Sigma_{\mu_0+\delta}\ne\varnothing.
\end{equation}
Indeed, if this failed, there would exist
$\delta_0\in(0,|\mu_0|)$ such that
$w_{\mu_0+\delta}\geq0$ in $\Sigma_{\mu_0+\delta}$ for every
$0<\delta<\delta_0$.  Together with the defining property for all
planes to the left of $\mu_0$, this would put
$\mu_0+\delta_0/2$ in the set whose supremum is $\mu_0$, a
contradiction.  We show that \eqref{crit:eq:negative-nonempty} is
impossible.  Fix $R>0$ and put
\begin{equation}\label{crit:eq:OmegaRdelta}
 \Omega_{R,\delta}:=
 B_R\cap\Sigma_{\mu_0+\delta}\cap
 \{w_{\mu_0+\delta}<0\}.
\end{equation}
Since $u\in C^1$, the reflected functions converge uniformly on every
compact set:
\begin{equation}\label{crit:eq:compact-convergence}
 w_{\mu_0+\delta}\longrightarrow w_{\mu_0}
 \quad\text{uniformly on }B_R.
\end{equation}
Let
$a(\delta)=\|w_{\mu_0+\delta}-w_{\mu_0}\|_{L^\infty(B_R)}$.
On $\Omega_{R,\delta}\cap\Sigma_{\mu_0}$ we have
$0<w_{\mu_0}<a(\delta)$.  Hence
$$
 \Omega_{R,\delta}\cap\Sigma_{\mu_0}
 \subset\{x\in B_R\cap\Sigma_{\mu_0}:
           0<w_{\mu_0}(x)<a(\delta)\}.
$$
The measure of the set on the right tends to zero by
\eqref{crit:eq:strict-w0} and continuity from above.  The remaining strip
$B_R\cap(\Sigma_{\mu_0+\delta}\setminus\Sigma_{\mu_0})$ also has
measure tending to zero.  Therefore
\begin{equation}\label{crit:eq:Omega-small}
 |\Omega_{R,\delta}|\longrightarrow0
 \quad\text{as }\delta\downarrow0.
\end{equation}

Let $2^*=2N/(N-2)$.  On $\Omega_{R,\delta}$,
\eqref{crit:eq:psi-upper} and local boundedness of $u$ give
$p\psi_{\mu_0+\delta}^{p-1}\leq K_R$.  By H\"older's and Sobolev's
inequalities, after extending the negative part by zero,
\begin{align}
 &\int_{\Omega_{R,\delta}}
 p\psi_{\mu_0+\delta}^{p-1}
 (w_{\mu_0+\delta}^-)^2\,\mathrm{d} x \notag\\
 &\qquad\leq K_R|\Omega_{R,\delta}|^{2/N}
 \|w_{\mu_0+\delta}^-\|_{2^*}^2
 \leq K_RS_N|\Omega_{R,\delta}|^{2/N}
 \|\nabla w_{\mu_0+\delta}^-\|_2^2.\label{crit:eq:small-measure-estimate}
\end{align}
On the complement of $B_R$, the bound
\eqref{crit:eq:psi-upper} and $u(x)\rightarrow0$ give, uniformly in small $\delta$,
\begin{equation}\label{crit:eq:tail-estimate}
 \int_{B_R^c\cap\{w_{\mu_0+\delta}<0\}}
 p\psi_{\mu_0+\delta}^{p-1}(w_{\mu_0+\delta}^-)^2\,\mathrm{d} x
 \leq o_R(1)\|w_{\mu_0+\delta}^-\|_2^2.
\end{equation}
Consequently, given $0<\varepsilon<1$, first choosing $R$ large and
then $\delta$ small in \eqref{crit:eq:Omega-small} yields
\begin{equation}\label{crit:eq:nonlinear-error}
 \int_{\{w_{\mu_0+\delta}<0\}}
 p\psi_{\mu_0+\delta}^{p-1}(w_{\mu_0+\delta}^-)^2\,\mathrm{d} x
 \leq\varepsilon\left(
 \|\nabla w_{\mu_0+\delta}^-\|_2^2
 +\|w_{\mu_0+\delta}^-\|_2^2\right).
\end{equation}

Choose $\varepsilon>0$ so small that
\begin{equation}\label{crit:eq:kappa-positive}
 \kappa_\varepsilon:=(d+2)\sqrt{1-\varepsilon}-d-\varepsilon>0.
\end{equation}
Using the nonnegative test $-w_{\mu_0+\delta}^-$ in
\eqref{crit:eq:w-ineq}, multiplying by $-1$, invoking
\eqref{crit:eq:nonlinear-error}, and then applying Lemma
\ref{crit:lem:halfspace-gap} on $\Sigma_{\mu_0+\delta}$ (which is allowed
because $\mu_0+\delta<0$), we obtain
\begin{align*}
 0&\geq
 \int\left((1-\varepsilon)|\nabla w_{\mu_0+\delta}^-|^2
       +|y|^2(w_{\mu_0+\delta}^-)^2\right)\,\mathrm{d} x
 -(d+\varepsilon)\|w_{\mu_0+\delta}^-\|_2^2\\
 &\geq\kappa_\varepsilon
 \|w_{\mu_0+\delta}^-\|_2^2.
\end{align*}
Thus the negative part vanishes, contradicting
\eqref{crit:eq:negative-nonempty}.  Therefore $\mu_0=0$.

To run the argument from the opposite side without changing any sign,
apply the already proved left-half-space result to
$$
 \widetilde u(y_1,y_2,\ldots,y_d,z)
 :=u(-y_1,y_2,\ldots,y_d,z),
$$
which solves the same equation.  The two resulting inequalities at
the plane $y_1=0$ are opposite to one another, and therefore
$u(-y_1,y',z)=u(y_1,y',z)$.  More generally, for any unit vector
$e\in\R^d$, rotate the $y$ coordinates so that $e$ becomes the first
coordinate vector.  Since $|y|^2$ is invariant under that rotation,
the same proof shows that $u$ is invariant under reflection in
$\{y\cdot e=0\}$.  Such reflections generate $O(d)$; hence
$u(y,z)=U(|y|,z)$.  This is the classical moving-plane
conclusion of \cite{GidasNiNirenberg1979}, with the endpoint
coercivity supplied here by Lemma \ref{crit:lem:halfspace-gap}.

Finally, for every $\mu<0$, $w_\mu$ is not identically zero, so the
Hopf conclusion in \eqref{crit:eq:strict-w0} holds at $T_\mu$.  Since the
outward normal to $\Sigma_\mu$ is $e_1$ and
$$
 \partial_{y_1}w_\mu=-2\partial_{y_1}u
 \quad\text{on }T_\mu,
$$
we obtain $\partial_{y_1}u>0$ on $y_1=\mu<0$.  Evaluating at
$y=-re_1$ gives $\partial_rU(r,z)<0$ for every $r>0$.
\end{proof}

\subsection{An annihilation-operator inequality}\label{crit:sec:annihilation}

The symmetry of $u$ is not by itself sufficient for the final
Pohozaev contradiction.  We need the stronger fact that $u$ decreases
relative to the oscillator ground state.

Define the linearized operator
\begin{equation}\label{crit:eq:linearized}
 \mathcal L:=-\Delta+|y|^2-d-pu^{p-1}.
\end{equation}

\begin{proposition}\label{crit:prop:relative-monotonicity}
Under the assumptions of Proposition \ref{crit:prop:y-symmetry}, set
\begin{equation}\label{crit:eq:W-def}
 W(y,z):=\frac{u(y,z)}{\phi_0(y)}.
\end{equation}
Then $W(y,z)=\mathcal W(|y|,z)$ and
\begin{equation}\label{crit:eq:W-strict}
 \partial_r\mathcal W(r,z)<0
 \qquad(r>0,\ z\in\R^m).
\end{equation}
\end{proposition}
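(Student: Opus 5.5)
By Proposition~\ref{crit:prop:y-symmetry} we have $u(y,z)=U(|y|,z)$, and $\phi_0$ is radial, so $W(y,z)=\mathcal W(|y|,z)$ holds at once. For the strict monotonicity I would use the annihilation operators
$$
 a_i:=\partial_{y_i}+y_i,\qquad a_iu=\phi_0\,\partial_{y_i}W,
$$
and show that $v:=a_1u<0$ on $\{y_1>0\}$. By rotational invariance in $y$ this gives $\partial_r\mathcal W<0$. Lemma~\ref{crit:lem:regularity}, specifically \eqref{crit:eq:qv-form-domain}, guarantees that $v\in H^1\cap L^2(|y|^2\,\mathrm dx)$, so $v^+$ restricted to the half-space is an admissible form test function.

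\emph{Step 1: equations for $v$ and for $\partial_{y_1}u$.} Using the commutation relation $H_ya_1=a_1(H_y-2)$ together with $a_1(u^p)=pu^{p-1}v-(p-1)y_1u^p$, the equation \eqref{crit:eq:model} yields, first in distributions and then in form sense,
$$
 (\mathcal L+2)v=-(p-1)y_1u^p .
$$
The right-hand side is $\le0$ on $\Sigma^+:=\{y_1>0\}$. Differentiating \eqref{crit:eq:model} in $y_1$ gives
$$
 \mathcal L(\partial_{y_1}u)=-2y_1u .
$$
Hence $h:=-\partial_{y_1}u$ satisfies $\mathcal Lh=2y_1u>0$ in $\Sigma^+$. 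By Proposition~\ref{crit:prop:y-symmetry}, $h>0$ in $\Sigma^+$. On the boundary $\{y_1=0\}$, evenness gives $\partial_{y_1}u=0$, and $y_1u=0$ trivially, so $v=0$ there. Thus $v^+\in H_0^1(\Sigma^+)$.

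\emph{Step 2: a Picone lower bound for $\mathcal L$ on $\Sigma^+$.} For $\varphi\in C_c^\infty(\Sigma^+)$, Picone's identity with the positive function $h$ gives
$$
 \int_{\Sigma^+}\Bigl(|\nabla\varphi|^2+(|y|^2-d-pu^{p-1})\varphi^2\Bigr)
 \ge\int_{\Sigma^+}\frac{\mathcal Lh}{h}\varphi^2\ge0 .
$$
Here $|\nabla\varphi|^2-\nabla h\cdot\nabla(\varphi^2/h)=|\nabla\varphi-\varphi\nabla h/h|^2\ge0$ is used pointwise. I would extend this to $\varphi=v^+$ by approximating with functions compactly supported in $\Sigma^+$. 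The main obstacle is exactly this density step. Since $h$ vanishes on $\{y_1=0\}$, the quotient $\varphi^2/h$ must be controlled near the boundary. Since $\mathcal L$ is not coercive, one also needs the tails at infinity. Near the boundary, Hopf's lemma applied to $h$ (which solves $\mathcal Lh>0$, $h\ge0$) gives $h\ge c\,y_1$ locally. Together with cutoffs $\theta_\epsilon(y_1)$ this makes the error terms vanish, because $\varphi\in H_0^1$ is controlled by $y_1$ through Hardy's inequality. The tails are handled by the $X$-bounds and the fact that the form of $\mathcal L$ is continuous on $H^1\cap L^2(|y|^2)$.

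\emph{Step 3: conclusion.} Test $(\mathcal L+2)v\le0$ with $v^+$ on $\Sigma^+$. This gives $Q_{\mathcal L}(v^+)+2\|v^+\|_2^2\le0$, and Step~2 then forces $v^+\equiv0$, so $v\le0$ in $\Sigma^+$. Now $v\not\equiv0$: otherwise $u=c(z)\phi_0$, and substituting into the equation gives $-\Delta_zc=c^p\phi_0^{p-1}$, which depends on $y$; this is impossible for $c>0$. The strong maximum principle for $(\mathcal L+2)v\le0$, with the coefficient bounded on compacts as in the proof of Proposition~\ref{crit:prop:y-symmetry}, then gives $v<0$ in $\Sigma^+$. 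Evaluating at $y=re_1$ yields $\phi_0\,\partial_r\mathcal W(r,z)<0$ for all $r>0$, which is \eqref{crit:eq:W-strict}.
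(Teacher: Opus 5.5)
Your proposal is correct and follows the paper's proof essentially step for step. The shared ingredients are the annihilation derivative $v_i=(\partial_{y_i}+y_i)u=\phi_0\,\partial_{y_i}W$, the commutator identity giving $(\mathcal L+2)v_i=-(p-1)y_iu^p$, the Picone lower bound built from $q_i=-\partial_{y_i}u$, testing with $v_i^+$, and the strong maximum principle. The density step is easier than you expect: only the inequality $\int\bigl(|\nabla\varphi|^2+(|y|^2-d-pu^{p-1})\varphi^2\bigr)\ge0$ has to be extended, and this form is continuous on $H_0^1(\Sigma^+)\cap L^2(|y|^2\,\mathrm dx)$ because $u$ is bounded. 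So density of $C_c^\infty(\Sigma^+)$ suffices, with no Hopf or Hardy control of $\varphi^2/h$ needed. Likewise, $v\not\equiv0$ follows at once from $(\mathcal L+2)v=-(p-1)y_1u^p<0$ in $\Sigma^+$, without the detour through $u=c(z)\phi_0$.
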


\begin{proof}
Fix $i\in\{1,\ldots,d\}$ and let
$\Omega_i:=\{(y,z):y_i>0\}$.  By Proposition
\ref{crit:prop:y-symmetry},
\begin{equation}\label{crit:eq:q-positive}
 q_i:=-\partial_{y_i}u>0\quad\text{in }\Omega_i,
 \qquad q_i=0\quad\text{on }\partial\Omega_i.
\end{equation}
Differentiating \eqref{crit:eq:model} in $y_i$ gives
\begin{equation}\label{crit:eq:q-equation}
 \mathcal L q_i=2y_i u>0\quad\text{in }\Omega_i.
\end{equation}
All differentiations and tests below are justified by
\eqref{crit:eq:qv-form-domain}; alternatively they may first be performed
with compact cutoffs and then passed to the limit.

For $\eta\in C_c^\infty(\Omega_i)$, the ground-state, or Picone,
identity gives
\begin{align}
 \mathcal Q_i[\eta]
 &:=\int_{\Omega_i}
 \left(|\nabla\eta|^2+
 (|y|^2-d-pu^{p-1})\eta^2\right)\,\mathrm{d} x \notag\\
 &=\int_{\Omega_i}q_i^2
   \left|\nabla\left(\frac{\eta}{q_i}\right)\right|^2\,\mathrm{d} x
 +\int_{\Omega_i}\frac{\mathcal L q_i}{q_i}\eta^2\,\mathrm{d} x
 \geq0.\label{crit:eq:picone}
\end{align}
Here is the full calculation behind \eqref{crit:eq:picone}.  Since $q_i>0$
on the compact support of $\eta$,
\begin{align*}
 q_i^2\left|\nabla\left(\frac{\eta}{q_i}\right)\right|^2
 &=|\nabla\eta|^2
   -2\frac{\eta}{q_i}\nabla\eta\cdot\nabla q_i
   +\frac{\eta^2}{q_i^2}|\nabla q_i|^2\\
 &=|\nabla\eta|^2
   -\nabla q_i\cdot\nabla\left(\frac{\eta^2}{q_i}\right).
\end{align*}
Integration by parts, with no boundary term because
$\eta\Subset\Omega_i$, gives
$$
 \int q_i^2\left|\nabla\left(\frac{\eta}{q_i}\right)\right|^2
 =\int|\nabla\eta|^2
  +\int\frac{\Delta q_i}{q_i}\eta^2.
$$
Adding the potential term and using
$$
 \frac{\mathcal L q_i}{q_i}
 =-\frac{\Delta q_i}{q_i}+|y|^2-d-pu^{p-1}
$$
proves \eqref{crit:eq:picone} term by term.  Thus no spectral assertion
about $\mathcal L$ is being assumed.  This identity is the linear
Allegretto--Piepenbrink ground-state transform; see
\cite{Allegretto1974,Piepenbrink1974}.  Since $u$ is bounded, the
negative part of the potential in \eqref{crit:eq:linearized} is bounded.
To see the required density, first truncate a form-domain function by
cutoffs in $B_R$, then translate the cutoff a distance $o(1)$ into
$\Omega_i$, and finally mollify; the $H^1$ and $|y|^2$-weighted errors
tend to zero in this order.  Hence $C_c^\infty(\Omega_i)$ is dense in
the space below, and $\mathcal Q_i\geq0$ extends to
\begin{equation}\label{crit:eq:form-domain-half}
 H_0^1(\Omega_i)\cap L^2(\Omega_i,|y|^2\,\mathrm{d} x).
\end{equation}

Introduce the oscillator annihilation derivative
\begin{equation}\label{crit:eq:v-def}
 v_i:=A_iu,\qquad \hbox{with}~A_i:=\partial_{y_i}+y_i.
\end{equation}
The commutator identity
\begin{equation}\label{crit:eq:commutator}
 [H-d,A_i]
 =-2A_i
\end{equation}
follows directly from
$$
 [-\partial_{y_i}^2,y_i]=-2\partial_{y_i},
 \qquad [y_i^2,\partial_{y_i}]=-2y_i,
$$
all other summands of $H-d$ commuting with
$A_i$.  Together with $(H-d)u=u^p$, it implies
\begin{equation}\label{crit:eq:v-equation}
 (\mathcal L+2)v_i=-(p-1)y_i u^p<0
 \quad\text{in }\Omega_i.
\end{equation}
For clarity, the nonlinear computation is
\begin{align*}
 (H-d)v_i=&(H-d)A_iu=\big(A_i(H-d)-2A_i\big)u\\
 &=A_i(u^p)-2v_i\\
 &=pu^{p-1}\partial_{y_i}u+y_i u^p-2v_i\\
 &=pu^{p-1}(A_iu-y_iu)+y_i u^p-2v_i\\
 &=\big(pu^{p-1}-2\big)v_i-(p-1)y_i u^p,
\end{align*}
which gives \eqref{crit:eq:v-equation}.

The $y_i$-evenness of $u$ gives
$\partial_{y_i}u\big|_{y_i=0}=0$ and hence
$v_i=A_i u=0$ on $\partial\Omega_i$.  Lemma
\ref{crit:lem:regularity} gives
$v_i\in H^1(\mathbb{R}^N)\cap L^2(\mathbb{R}^N,|y|^2\,\mathrm{d}x)$;
therefore its restriction, whose trace on $\partial\Omega_i$ vanishes,
belongs to
$$
 H_0^1(\Omega_i)\cap L^2(\Omega_i,|y|^2\,\mathrm{d}x).
$$
We may consequently test \eqref{crit:eq:v-equation} with
$v_i^+:=\max\{v_i,0\}$ and use \eqref{crit:eq:picone}:
\begin{equation}\label{crit:eq:v-positive-test}
 0\leq 2\|v_i^+\|_2^2\leq \mathcal Q_i[v_i^+]+2\|v_i^+\|_2^2
 =-(p-1)\int_{\Omega_i}y_i u^p v_i^+\,\mathrm{d} x\leq0.
\end{equation}
The right-hand side is finite because
$|y_i|u^p\leq\|u\|_\infty^{p-1}|y|u\in L^2$ and $v_i^+\in L^2$.
It follows that $v_i^+\equiv0$, so $v_i\leq0$.  In fact $v_i<0$ in
$\Omega_i$: the nonnegative function $-v_i$ satisfies
$$
 (\mathcal L+2)(-v_i)=(p-1)y_i u^p>0,
$$
and the strong maximum principle excludes an interior zero.

Finally, \eqref{crit:eq:phi0} gives the exact identity
\begin{equation}\label{crit:eq:v-W}
 v_i=(\partial_{y_i}+y_i)u
 =\phi_0\,\partial_{y_i}\left(\frac{u}{\phi_0}\right)
 =\phi_0\partial_{y_i}W.
\end{equation}
Thus $\partial_{y_i}W<0$ in $\Omega_i$.  Both $u$ and $\phi_0$ are
radial in $y$, so $W=\mathcal W(|y|,z)$, and taking $y=re_i$ proves
\eqref{crit:eq:W-strict}.
\end{proof}

\subsection{The strict Gaussian second-moment inequality}
\label{crit:sec:gaussian-moment}

\begin{proposition}\label{crit:prop:moment}
Let $u$ be as in Proposition \ref{crit:prop:relative-monotonicity}.  Then
\begin{equation}\label{crit:eq:moment-main}
 \int_{\R^N}|y|^2u^2\,\mathrm{d} x
 <\frac d2\int_{\R^N}u^2\,\mathrm{d} x.
\end{equation}
\end{proposition}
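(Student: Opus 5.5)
The plan is to reduce the inequality, slice by slice in $z$, to a correlation (Chebyshev-type) inequality for the Gaussian probability measure $\mathrm d\gamma(y):=\phi_0(y)^2\,\mathrm dy$ on $\mathbb{R}^d$. The inputs are Proposition \ref{crit:prop:relative-monotonicity} and the exact moment $\int_{\mathbb{R}^d}|y|^2\phi_0^2\,\mathrm dy=d/2$.

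First, I record the moment: by \eqref{crit:eq:phi0}, $\phi_0^2=\pi^{-d/2}e^{-|y|^2}$ is a product of one-dimensional Gaussians with variance $1/2$, so $\gamma$ is a probability measure with $\int|y|^2\,\mathrm d\gamma=d/2$. Next, I fix $z\in\mathbb{R}^m$ and set $f_z(y):=W(y,z)^2=\mathcal W(|y|,z)^2$. Then
$$
 u(y,z)^2\,\mathrm dy=f_z(y)\,\mathrm d\gamma(y),
 \qquad
 |y|^2u(y,z)^2\,\mathrm dy=|y|^2f_z(y)\,\mathrm d\gamma(y).
$$
Since $u\in X$ (Lemma \ref{crit:lem:regularity}), Fubini gives, for almost every $z$,
$$
 \int f_z\,\mathrm d\gamma<\infty,
 \qquad
 \int|y|^2f_z\,\mathrm d\gamma<\infty.
$$
By Proposition \ref{crit:prop:relative-monotonicity} and $W>0$, the function $f_z$ is radial and strictly decreasing in $r=|y|$, while $g(y):=|y|^2$ is radial and strictly increasing in $r$.

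The key step is the symmetrization identity
$$
 \int g f_z\,\mathrm d\gamma-\int g\,\mathrm d\gamma\int f_z\,\mathrm d\gamma
 =\frac12\iint\bigl(f_z(y)-f_z(y')\bigr)\bigl(g(y)-g(y')\bigr)\,
 \mathrm d\gamma(y)\,\mathrm d\gamma(y').
$$
It holds because $\gamma$ is a probability measure and every term is integrable: $f_z$ and $gf_z$ are integrable by the previous step, and $g$ has all moments under $\gamma$. The integrand is nonpositive, because $f_z$ and $g$ are oppositely monotone in the radius. It is strictly negative whenever $|y|\neq|y'|$, and that set has full $\gamma\otimes\gamma$ measure. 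Hence, for almost every $z$,
$$
 \int_{\mathbb{R}^d}|y|^2u(y,z)^2\,\mathrm dy
 <\frac d2\int_{\mathbb{R}^d}u(y,z)^2\,\mathrm dy.
$$
Integrating in $z$ over a set of full measure preserves the strict inequality, since the difference of the two sides is strictly positive on a set of positive measure. This gives \eqref{crit:eq:moment-main}; finiteness of both sides follows from $u\in X$.

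The main obstacle is purely technical: justifying the splitting of the double integral in the symmetrization identity, i.e.\ that $\iint |f_z(y)g(y')|\,\mathrm d\gamma\,\mathrm d\gamma$ and the analogous cross term are finite. This reduces to $\int f_z\,\mathrm d\gamma<\infty$ and $\int g\,\mathrm d\gamma<\infty$, both already available. Everything substantive was done in Proposition \ref{crit:prop:relative-monotonicity}: mere radial monotonicity of $u$ (Proposition \ref{crit:prop:y-symmetry}) would not suffice, since the Gaussian weight has to be factored out.
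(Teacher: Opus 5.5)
Your proposal is correct and follows the paper's proof essentially step for step. Both arguments use the Gaussian probability measure $\phi_0^2\,\mathrm dy$ with second moment $d/2$, apply the covariance (Chebyshev) identity slice-wise to $W(\cdot,z)^2$, and integrate the a.e.\ strict slice inequality in $z$ using $u\in X$. The one difference is in the strictness step, where you note the integrand is negative off the null set $\{|y|=|y'|\}$; this is slightly cleaner than the paper's choice of two thin annuli, but the route is the same.
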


\begin{proof}
Let
\begin{equation}\label{crit:eq:gamma}
 \,\mathrm{d}\gamma(y):=\phi_0(y)^2\,\mathrm{d} y
 =\pi^{-d/2}e^{-|y|^2}\,\mathrm{d} y.
\end{equation}
This is a probability measure and
\begin{equation}\label{crit:eq:gamma-second}
 \int_{\R^d}|y|^2\,\mathrm{d}\gamma(y)=\frac d2.
\end{equation}
For almost every fixed $z$, put
$F_z(y)=W(y,z)^2$.  Proposition
\ref{crit:prop:relative-monotonicity} says that $F_z$ is a strictly
decreasing function of $|y|$.  The elementary covariance identity
\begin{align}
 &\int_{\R^d}\left(|y|^2-\frac d2\right)F_z(y)\,\mathrm{d}\gamma(y)
 \notag\\
 &\quad=\frac12\iint_{\R^d\times\R^d}
 \bigl(|y|^2-|y'|^2\bigr)
 \bigl(F_z(y)-F_z(y')\bigr)
 \,\mathrm{d}\gamma(y)\,\mathrm{d}\gamma(y')\label{crit:eq:covariance}
\end{align}
is obtained as follows.  If $g(y)=|y|^2$, then expansion of the four
terms in the product gives
\begin{align*}
 &\frac12\iint(g(y)-g(y'))(F_z(y)-F_z(y'))
     \,\mathrm{d}\gamma(y)\,\mathrm{d}\gamma(y')\\
 &=\int gF_z\,\mathrm{d}\gamma
   -\left(\int g\,\mathrm{d}\gamma\right)
    \left(\int F_z\,\mathrm{d}\gamma\right),
\end{align*}
which is the left-hand side of \eqref{crit:eq:covariance} by
\eqref{crit:eq:gamma-second}.  This use of Fubini is legitimate: the
absolute value of the double-integral integrand is bounded by
$(g(y)+g(y'))(F_z(y)+F_z(y'))$, whose integral is finite because both
$\int F_z\,\mathrm{d}\gamma$ and $\int gF_z\,\mathrm{d}\gamma$ are finite.  The
integrand on the right of \eqref{crit:eq:covariance} is nonpositive,
because one factor increases and the other strictly decreases with
the radius.  It is strictly negative on a set of positive
$\gamma\otimes\gamma$ measure: choose $0<r_1<r_2$ and small radial
intervals around $r_1,r_2$; strict continuity and monotonicity of
$F_z$ give opposite strict signs for the two differences on the
product of the corresponding annuli.  Therefore
\begin{equation}\label{crit:eq:slice-moment}
 \int_{\R^d}|y|^2F_z\,\mathrm{d}\gamma
 <\frac d2\int_{\R^d}F_z\,\mathrm{d}\gamma.
\end{equation}
All integrals are finite for almost every $z$, since
$u\in X$.  Indeed, $F_z\,\mathrm{d}\gamma=u(y,z)^2\,\mathrm{d} y$.
By \eqref{crit:eq:slice-moment},
$$D(z):=\int_{\mathbb{R}^d}\left(|y|^2-\frac{d}{2}\right)u(y,z)^2 \mathrm{d}y$$
is a strictly negative integrable function of $z$ and
$$|D(z)|\leq \int_{\mathbb{R}^d}\left(|y|^2+\frac{d}{2}\right)u(y,z)^2 \mathrm{d}y.$$
Then
$$\int_{\mathbb{R}^m}|D(z)|\mathrm{d}z\leq \int_{\mathbb{R}^N}\left(|y|^2+\frac{d}{2}\right)u(y,z)^2 \mathrm{d}y\mathrm{d}z<\infty.$$
Now, we collect that
$$D(z)<0~\hbox{for a.e.}~z, \qquad D\in L^1(\mathbb{R}^m),$$
and thus Fubini's theorem proves \eqref{crit:eq:moment-main}.
\end{proof}

\subsection{Anisotropic Pohozaev identities}\label{crit:sec:pohozaev}

Use the abbreviations
\begin{equation}\label{crit:eq:quantities}
 \begin{aligned}
 A_y&:=\int_{\R^N}|\nabla_yu|^2\,\mathrm{d} x,&
 A_z&:=\int_{\R^N}|\nabla_zu|^2\,\mathrm{d} x,\\
 B&:=\int_{\R^N}|y|^2u^2\,\mathrm{d} x,&
 M&:=\int_{\R^N}u^2\,\mathrm{d} x,&
 C&:=\int_{\R^N}u^{p+1}\,\mathrm{d} x.
 \end{aligned}
\end{equation}

\begin{lemma}
\label{crit:lem:pohozaev}
Let $1<p\leq p_c$ and let $u$ solve \eqref{crit:eq:model}.  Then
\begin{align}
 A_y+A_z+B-dM&=C,\label{crit:eq:nehari}\\
 2A_z&=\frac{m(p-1)}{p+1}C,\label{crit:eq:z-pohozaev}\\
 2(A_y-B)&=\frac{d(p-1)}{p+1}C.\label{crit:eq:y-pohozaev}
\end{align}
Consequently,
\begin{equation}\label{crit:eq:key-identity}
 2B-dM
 =\frac{N+2-(N-2)p}{m(p-1)}A_z.
\end{equation}
\end{lemma}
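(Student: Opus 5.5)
The Nehari identity \eqref{crit:eq:nehari} was already obtained in the proof of Lemma \ref{crit:lem:regularity}, see \eqref{crit:eq:nehari-early}. The two Pohozaev identities come from testing the equation with the anisotropic dilation generators $z\cdot\nabla_z u$ and $y\cdot\nabla_y u$ separately.

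Formally, multiplying $-\Delta u+(|y|^2-d)u=u^p$ by $z\cdot\nabla_zu$ and integrating by parts uses the following elementary facts:
\begin{itemize}
\item $\int(-\Delta_z u)\,z\cdot\nabla_zu=\frac{m-2}{2}A_z$;
\item $\int(-\Delta_y u)\,z\cdot\nabla_zu=\frac m2A_y$;
\item $\int(|y|^2-d)u\,z\cdot\nabla_zu=-\frac m2(B-dM)$;
\item $\int u^p\,z\cdot\nabla_zu=-\frac m{p+1}C$.
\end{itemize}
The second and third hold because $|y|^2-d$ does not depend on $z$ and the divergence in $z$ is $m$. This gives $\frac{m-2}{2}A_z+\frac m2A_y-\frac m2(B-dM)=-\frac m{p+1}C$. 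Replacing $A_y-B+dM$ by $C-A_z$ via \eqref{crit:eq:nehari} yields $-A_z+\frac m2 C=\frac m{p+1}C$, which is \eqref{crit:eq:z-pohozaev}.

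Similarly, testing with $y\cdot\nabla_yu$ uses:
\begin{itemize}
\item $\int(-\Delta_y u)\,y\cdot\nabla_yu=\frac{d-2}{2}A_y$;
\item $\int(-\Delta_z u)\,y\cdot\nabla_yu=\frac d2A_z$;
\item $\int|y|^2u\,y\cdot\nabla_yu=-\frac{d+2}{2}B$;
\item $-d\int u\,y\cdot\nabla_yu=\frac{d^2}{2}M$;
\item $\int u^p\,y\cdot\nabla_yu=-\frac d{p+1}C$.
\end{itemize}
Combining these with \eqref{crit:eq:nehari} gives $-A_y+B+\frac d2C=\frac d{p+1}C$, which is \eqref{crit:eq:y-pohozaev}.

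For \eqref{crit:eq:key-identity}, subtract \eqref{crit:eq:y-pohozaev} from twice the Nehari identity. This gives
\begin{equation*}
4B-2dM=\Bigl(2-\frac{d(p-1)}{p+1}\Bigr)C-2A_z,
\end{equation*}
and \eqref{crit:eq:z-pohozaev} then lets us write $C=\frac{2(p+1)}{m(p-1)}A_z$. Simplifying, $2B-dM=\frac{(p+1)(2-d(p-1)/(p+1))-m(p-1)}{m(p-1)}A_z$. The numerator is $2p+2-(d+m)(p-1)=N+2-(N-2)p$, as claimed.

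The main obstacle is justifying the integrations by parts on $\mathbb{R}^N$, since the multipliers $z\cdot\nabla_zu$ and $y\cdot\nabla_yu$ are unbounded. The plan is to multiply by $\eta_R(x)\,z\cdot\nabla_zu$, with $\eta_R$ the cutoff used in Lemma \ref{crit:lem:regularity}, integrate by parts classically (using $u\in C^\infty$), and let $R\to\infty$. The error terms are of the form $\int|\nabla\eta_R||z|\,(|\nabla u|^2+|y|^2u^2+u^2+u^{p+1})$. Because $|z||\nabla\eta_R|\le C$, supported in $R<|x|<2R$, each error is bounded by the tail of an integrable quantity. Integrability holds since $u\in X\cap H^1\cap L^{p+1}$, so every error tends to $0$.

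For the $y$-multiplier, the term $|y|^2u\,y\cdot\nabla_yu$ requires $|y|^3u|\nabla u|\in L^1$, or at least the tail behaviour of $|y|^2u^2\,|y||\nabla\eta_R|\le C|y|^2u^2$, which suffices. The main terms converge by dominated convergence using \eqref{crit:eq:graph-regularity}: for instance $|y||\nabla u|\cdot|y|^2 u\le$ a product of $L^2$ functions, since $|y|\nabla u,|y|^2u\in L^2$. Also $\Delta u\,(y\cdot\nabla_y u)\in L^1$ because $D^2u,|y|\nabla u\in L^2$. The regularity from Lemma \ref{crit:lem:regularity} is therefore exactly what makes every product absolutely integrable.
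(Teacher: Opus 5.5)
Your route is the paper's. You test with the anisotropic dilation generators $z\cdot\nabla_z u$ and $y\cdot\nabla_y u$; the paper writes this as the Pucci--Serrin identity \eqref{crit:eq:variational-identity} with $X_R^z=\chi_RP_zx$ and $X_R^y=\chi_RP_yx$. You then remove the cutoff using integrability of $|\nabla u|^2+(|y|^2+1)u^2+u^{p+1}$ and finish with algebra via \eqref{crit:eq:nehari}. Your cutoff justification and your derivation of \eqref{crit:eq:key-identity} from the three identities are fine. The graph regularity \eqref{crit:eq:graph-regularity} is not actually needed once you integrate by parts before letting $R\to\infty$.

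However, the formal computation as written does not produce \eqref{crit:eq:z-pohozaev} or \eqref{crit:eq:y-pohozaev}, because all four kinetic-term formulas have the wrong sign. Since $\int(-\Delta u)(X\cdot\nabla u)=\int(DX\nabla u)\cdot\nabla u-\frac12\int(\operatorname{div}X)|\nabla u|^2$, the correct values are:
\begin{itemize}
\item $\int(-\Delta_zu)\,z\cdot\nabla_zu=-\frac{m-2}{2}A_z$;
\item $\int(-\Delta_yu)\,z\cdot\nabla_zu=-\frac m2A_y$;
\item $\int(-\Delta_yu)\,y\cdot\nabla_yu=-\frac{d-2}{2}A_y$;
\item $\int(-\Delta_zu)\,y\cdot\nabla_yu=-\frac d2A_z$.
\end{itemize}
For a check with $m=1$: $-\int u''\,zu'\,\mathrm{d}z=\frac12\int(u')^2$.

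With your signs, the $z$-equation reads $\frac{m-2}{2}A_z+\frac m2(A_y-B+dM)=-\frac m{p+1}C$. But \eqref{crit:eq:nehari} says $A_y+B-dM=C-A_z$, not $A_y-B+dM=C-A_z$. Even granting your substitution, you would get $-A_z+\frac m2C=-\frac m{p+1}C$, which is the wrong identity. Likewise, your $y$-bullets give $-A_y-B+\frac d2(A_y+A_z-B+dM)=-\frac d{p+1}C$, and \eqref{crit:eq:nehari} cannot reduce this.

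With the corrected signs you obtain $\frac{m-2}{2}A_z+\frac m2(A_y+B-dM)=\frac m{p+1}C$ and $-A_y+B+\frac d2(A_y+A_z+B-dM)=\frac d{p+1}C$. These are equivalent to the paper's \eqref{crit:eq:z-raw} and \eqref{crit:eq:y-raw}. Your stated conclusions $-A_z+\frac m2C=\frac m{p+1}C$ and $-A_y+B+\frac d2C=\frac d{p+1}C$ then do follow. So your final lines are correct, but they are not derived from the steps you displayed; fix the signs of the four kinetic terms and the Nehari substitution.
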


\begin{proof}
Identity \eqref{crit:eq:nehari} is \eqref{crit:eq:nehari-early}.  We give a
cutoff proof of the two Pohozaev identities so that no differentiability
of an unbounded dilation orbit is assumed.

Put $V_0(y)=|y|^2-d$.  If
$X\in C_c^1(\R^N;\R^N)$, multiplication of
$-\Delta u+V_0u=u^p$ by $X\cdot\nabla u$ and integration by parts
give the variational identity
\begin{align}
 0=\int_{\R^N}\biggl\{&
 (DX\nabla u)\cdot\nabla u
 -\frac12(\operatorname{div}X)|\nabla u|^2\notag\\
 &-\frac12\bigl(V_0\operatorname{div}X
                 +X\cdot\nabla V_0\bigr)u^2
 +\frac{\operatorname{div}X}{p+1}u^{p+1}
 \biggr\}\,\mathrm{d} x.\label{crit:eq:variational-identity}
\end{align}
This is the standard Pohozaev--Pucci--Serrin identity
\cite{Pohozaev1965,PucciSerrin1986}.  In the present setting it follows
directly from the displayed integration by parts because
$u\in H^2_{\mathrm{loc}}$.  More explicitly, with repeated indices
summed,
\begin{align*}
 \int(-\Delta u)(X\cdot\nabla u)
 &=\int \partial_j u\,\partial_j(X_k\partial_k u)\\
 &=\int (\partial_jX_k)\partial_j u\partial_k u
   +\frac12\int X\cdot\nabla(|\nabla u|^2)\\
 &=\int(DX\nabla u)\cdot\nabla u
   -\frac12\int(\operatorname{div}X)|\nabla u|^2,
\end{align*}
whereas
\begin{align*}
 \int V_0u(X\cdot\nabla u)
 &=-\frac12\int\bigl(V_0\operatorname{div}X
          +X\cdot\nabla V_0\bigr)u^2,\\
 \int u^p(X\cdot\nabla u)
 &=-\frac1{p+1}\int(\operatorname{div}X)u^{p+1}.
\end{align*}
Moving the last expression to the left proves
\eqref{crit:eq:variational-identity} with every sign displayed.

To justify the unbounded vector fields used below, choose
$\chi\in C_c^\infty(\R^N)$ with $\chi=1$ on $B_1$, $\chi=0$
outside $B_2$, and set $\chi_R(x)=\chi(x/R)$.  Let $P_y$ and $P_z$
be the orthogonal projections onto the $y$ and $z$ coordinates.  Use
in \eqref{crit:eq:variational-identity}
$$
 X_R^y(x)=\chi_R(x)P_yx,
 \qquad X_R^z(x)=\chi_R(x)P_zx.
$$
The matrices $DX_R^y,DX_R^z$ and their divergences are uniformly
bounded and converge pointwise to $P_y,P_z$ and $d,m$, respectively.
Every term produced by differentiating $\chi_R$ is supported in
$B_{2R}\setminus B_R$ and is bounded by a constant times
\begin{equation}\label{crit:eq:integrable-density}
 |\nabla u|^2+(|y|^2+1)u^2+u^{p+1},
\end{equation}
which is integrable.  Thus dominated convergence, or equivalently a
tail estimate on the annuli, permits $R\rightarrow\infty$.

For $X=P_zx=(0,z)$, one has
$DX=P_z$, $\operatorname{div}X=m$, and
$X\cdot\nabla V_0=0$.  The limit of
\eqref{crit:eq:variational-identity} is
\begin{equation}\label{crit:eq:z-raw}
 A_z-\frac m2(A_y+A_z+B-dM)+\frac{m}{p+1}C=0.
\end{equation}
Substitution of \eqref{crit:eq:nehari} gives, without suppressing the
algebra,
$$
 A_z-\frac m2C+\frac{m}{p+1}C=0,
 \qquad
 A_z=\frac{m(p-1)}{2(p+1)}C,
$$
which is \eqref{crit:eq:z-pohozaev}.

For $X=P_yx=(y,0)$, one has
$DX=P_y$, $\operatorname{div}X=d$, and
$X\cdot\nabla V_0=2|y|^2$.  Hence
\begin{equation}\label{crit:eq:y-raw}
 A_y-\frac d2(A_y+A_z+B-dM)-B+\frac{d}{p+1}C=0.
\end{equation}
Using \eqref{crit:eq:nehari} yields
$$
 A_y-B-\frac d2C+\frac{d}{p+1}C=0,
 \qquad
 A_y-B=\frac{d(p-1)}{2(p+1)}C,
$$
which is \eqref{crit:eq:y-pohozaev}.

Multiplying \eqref{crit:eq:y-pohozaev} by $m$, multiplying
\eqref{crit:eq:z-pohozaev} by $d$, and subtracting eliminates $C$ and
gives
\begin{equation}\label{crit:eq:AyB}
 A_y-B=\frac dm A_z.
\end{equation}
Thus $A_y=B+(d/m)A_z$, and \eqref{crit:eq:nehari} becomes
$$
 C=\left(B+\frac dmA_z\right)+A_z+B-dM
  =2B+\frac NmA_z-dM.
$$
Rearrange this equality and use \eqref{crit:eq:z-pohozaev}:
\begin{align*}
 2B-dM
 &=C-\frac Nm A_z=\left(\frac{2(p+1)}{m(p-1)}-\frac Nm\right)A_z\\
 &=\frac{N+2-(N-2)p}{m(p-1)}A_z.
\end{align*}
This is \eqref{crit:eq:key-identity}.
\end{proof}

\subsection{Proof of the theorem}\label{crit:sec:proof-main}

\begin{proof}[Proof of Theorem \ref{crit:thm:main}]
First suppose $1<p\leq p_c$ and, seeking a contradiction, let
$u\not\equiv0$.  Propositions \ref{crit:prop:y-symmetry} and
\ref{crit:prop:relative-monotonicity} apply.  Proposition
\ref{crit:prop:moment} gives
\begin{equation}\label{crit:eq:negative-side}
 2B-dM<0.
\end{equation}
On the other hand, Lemma \ref{crit:lem:pohozaev} gives
\begin{equation}\label{crit:eq:nonnegative-side}
 2B-dM
 =\frac{N+2-(N-2)p}{m(p-1)}A_z\geq0,
\end{equation}
because $p\leq (N+2)/(N-2)$.  Equations
\eqref{crit:eq:negative-side} and \eqref{crit:eq:nonnegative-side} contradict
each other.  Hence $u\equiv0$.

It remains to treat $p=1$.  The equation is
$(H-d)u=u$.  Testing with $\eta_R^2u$ as in Lemma
\ref{crit:lem:regularity} gives explicitly
$$
 \int\eta_R^2(|\nabla u|^2+|y|^2u^2)\,\mathrm{d} x
 =(d+1)\int\eta_R^2u^2\,\mathrm{d} x
 -2\int\eta_Ru\nabla u\cdot\nabla\eta_R\,\mathrm{d} x.
$$
Young's inequality gives a bound independent of $R$, so Fatou's lemma
implies $u\in X$.  The form identity
$\mathfrak{h}[u,\varphi]=(d+1)\langle u,\varphi\rangle_{L^2}$ then
gives $u\in D(H)$.  Let
$\{\phi_\alpha\}_{\alpha\in\N_0^d}$ be the Hermite basis of
$L^2(\R^d)$, with
$$
 H_y\phi_\alpha=(d+2|\alpha|)\phi_\alpha.
$$
Set
$$
 u_\alpha(z):=\int_{\R^d}u(y,z)\phi_\alpha(y)\,\mathrm{d} y
 \in L^2(\R^m).
$$
Projecting $(H-d)u=u$ onto $\phi_\alpha$ gives
$(-\Delta_z+2|\alpha|)u_\alpha=u_\alpha$ in distributions.  Taking
the Fourier transform in $z$ gives, for almost every $\xi\in\R^m$,
\begin{equation}\label{crit:eq:p1-fourier}
 \bigl(2|\alpha|+|\xi|^2-1\bigr)
 \widehat u_\alpha(\xi)=0.
\end{equation}
For $|\alpha|\geq1$ the coefficient never vanishes.  For
$\alpha=0$, it vanishes only on the sphere $|\xi|=1$, a set of
$m$-dimensional Lebesgue measure zero.  An $L^2$ function supported on
that sphere is zero.  Thus every coefficient
$\widehat u_\alpha$ vanishes, and $u\equiv0$.  This is also consistent with Proposition \ref{pro:20260514-1010}.
\end{proof}

\subsection{A complementary necessary upper bound}
\label{crit:sec:upper-bound}

The following observation is not needed for Theorem \ref{crit:thm:main}, but it gives a complementary upper obstruction for any possible finite-energy pure-power solution.

\begin{proposition}
\label{crit:prop:upper-obstruction}
Let $p>1$, and suppose that
$0\leq u\in C^2(\R^N)\cap X\cap L^{p+1}(\R^N)$ is a nonzero
solution of \eqref{crit:eq:model}.  Then necessarily
\begin{equation}\label{crit:eq:pm-bound}
 p< p_m:=\frac{m+2}{m-2}
 \qquad\text{when }m>2.
\end{equation}
\end{proposition}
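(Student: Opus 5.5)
We use only the $z$-Pohozaev identity together with the Nehari identity and the spectral lower bound \eqref{eq4-1}, so no symmetry is needed. With the notation \eqref{crit:eq:quantities}, the first step is to derive \eqref{crit:eq:nehari} and \eqref{crit:eq:z-pohozaev} under the present hypotheses $u\in C^2\cap X\cap L^{p+1}$.

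The Nehari identity comes from testing the equation with $\eta_R^2u$. The cutoff error is controlled exactly as in Lemma \ref{crit:lem:regularity}, using $u\in X$ and $u^{p+1}\in L^1$.

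For the $z$-Pohozaev identity we use the variational identity \eqref{crit:eq:variational-identity} with $X_R^z=\chi_R(x)P_zx$. This requires local $H^2$ regularity, which holds since $u\in C^2$. The only delicate point is that the cutoff terms are bounded by the integrable density \eqref{crit:eq:integrable-density}; here we need $\nabla u\in L^2$, $|y|u\in L^2$ and $u\in L^{p+1}$, all of which are assumed. Letting $R\to\infty$ then gives
\[
 2A_z=\frac{m(p-1)}{p+1}C .
\]

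Next, \eqref{eq4-1} gives $A_y+B\ge dM$; in fact $A_y+B-dM\ge0$ already holds slice by slice from the $y$-oscillator. Inserting this into \eqref{crit:eq:nehari} yields $A_z\le C$. Combining with the $z$-Pohozaev identity,
\[
 \frac{m(p-1)}{2(p+1)}C=A_z\le C .
\]
Since $u\not\equiv0$ we have $C>0$, so $m(p-1)\le 2(p+1)$, that is, $p\le (m+2)/(m-2)$ when $m>2$.

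To obtain strict inequality, suppose $p=p_m$. Then $A_z=C$, so $A_y+B=dM$. By the Hermite decomposition, equality in \eqref{eq4-1} restricted to the $y$-part forces $u(\cdot,z)$ to lie in the ground-state eigenspace of $H_y$ for a.e. $z$. Hence $u(y,z)=\phi_0(y)v(z)$ with $v=u_0\in H^1(\mathbb{R}^m)$, $v\ge0$, and $v\not\equiv0$.

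Substituting into the equation gives $-\Delta_zv\,\phi_0=\phi_0^pv^p$. Dividing by $\phi_0$ leaves $-\Delta_zv=\phi_0(y)^{p-1}v(z)^p$, whose left side is independent of $y$ while the right side is not, because $p>1$ and $\phi_0$ is nonconstant. Therefore $v^p\equiv0$, a contradiction. This gives $p<p_m$.

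The main obstacle is justifying the Pohozaev limit under the stated integrability assumptions. The equality-case argument is elementary once it is written through the Hermite coefficients: $\sum_\alpha 2|\alpha|\,\|u_\alpha\|^2=0$.
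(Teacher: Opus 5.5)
Your proposal is correct and follows the paper's argument: the Nehari identity plus the $z$-Pohozaev identity, then the slice-wise oscillator bound $A_y+B-dM\ge0$ giving $A_z\le C$, and in the equality case $p=p_m$ the ground-state factorization $u=\phi_0(y)v(z)$. The only deviation is the last step, where you divide pointwise by $\phi_0$ and the paper instead tests with $\psi=\phi_0^p-c_p\phi_0$, which is orthogonal to $\phi_0$ but satisfies $\langle\psi,\phi_0^p\rangle>0$ by strict Cauchy--Schwarz; your division is legitimate because $u\in C^2$ lets you choose the representative $v(z)=u(0,z)/\phi_0(0)$, so the factorization holds everywhere (it would also work distributionally, since $1/\phi_0$ is smooth).
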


\begin{proof}
The proofs of \eqref{crit:eq:nehari} and \eqref{crit:eq:z-pohozaev} in Lemma
\ref{crit:lem:pohozaev} use the restriction $p\leq p_c$ only to obtain the
integrability and regularity established earlier.  Under the explicit
assumptions of the present proposition, the same cutoff calculations
apply verbatim because the density \eqref{crit:eq:integrable-density} is in
$L^1(\R^N)$.  We may therefore use both identities here.

The oscillator spectral inequality gives
\begin{equation}\label{crit:eq:Qy-positive}
 Q_y:=A_y+B-dM
 =\int_{\R^m}\langle(H_y-d)u(\cdot,z),u(\cdot,z)\rangle_{L^2_y}
 \,\mathrm{d} z\geq0.
\end{equation}
Since $u\not\equiv0$ and $u\geq0$, one has $C>0$.  By
\eqref{crit:eq:nehari}, $C=A_z+Q_y\geq A_z$.  By
\eqref{crit:eq:z-pohozaev},
\begin{equation}\label{crit:eq:Az-kC}
 A_z=\frac{m(p-1)}{2(p+1)}C.
\end{equation}
If $p>(m+2)/(m-2)$, the coefficient in
\eqref{crit:eq:Az-kC} is greater than one, contradicting $A_z\leq C$.

At $p=(m+2)/(m-2)$, equality must hold throughout, so $Q_y=0$.
For almost every $z$, set
$$
 q(z):=\int_{\mathbb{R}^d}
 \bigl(|\nabla_yu(y,z)|^2+(|y|^2-d)u(y,z)^2\bigr)\,\mathrm{d}y.
$$
The oscillator ground-state inequality gives $q(z)\geq0$ for almost every
$z$, while Fubini's theorem and \eqref{crit:eq:Qy-positive} give
$\int_{\mathbb{R}^m}q(z)\,\mathrm{d}z=Q_y=0$.  Hence $q(z)=0$ for
almost every $z$.  Since
$$
 \ker(H_y-d)=\operatorname{span}\{\phi_0\},
$$
equality in the form inequality on each such slice yields
\begin{equation}\label{crit:eq:ground-factor}
 u(y,z)=\phi_0(y)v(z)
\end{equation}
for almost every $(y,z)$, where
$v(z)=\langle u(\cdot,z),\phi_0\rangle_{L^2_y}\geq0$.  In fact
$v\in H^1(\mathbb{R}^m)$, as follows by taking the $z$-derivatives in the
factorization.  This factorization is incompatible with $(H-d)u=u^p$ when $p>1$.  To see this without a pointwise division,
set
$$
 c_p:=\int_{\R^d}\phi_0^{p+1}\,\mathrm{d} y,
 \qquad \psi:=\phi_0^p-c_p\phi_0.
$$
Then $\langle\psi,\phi_0\rangle=0$, while strict Cauchy--Schwarz gives
\begin{equation}\label{crit:eq:strict-CS}
 \langle\psi,\phi_0^p\rangle
 =\int_{\R^d}\phi_0^{2p}\,\mathrm{d} y-c_p^2>0,
\end{equation}
because $\phi_0^p$ is not proportional to $\phi_0$ for $p>1$.
Here $c_p^2$ in \eqref{crit:eq:strict-CS} is
$|\langle\phi_0^p,\phi_0\rangle|^2/
\|\phi_0\|_2^2$, because $\|\phi_0\|_2=1$; hence the strict inequality
is exactly the strict Cauchy--Schwarz inequality.

Choose $0\leq\eta\in C_c^\infty(\R^m)$ such that
$\int v^p\eta\,\mathrm{d} z>0$.
The test $\psi(y)\eta(z)$ is admissible by first replacing $\psi$ with
$\chi_L(y)\psi(y)$ and then letting $L\rightarrow\infty$: the linear terms
converge in the form norm because $\psi$ is a Schwartz function, while the
nonlinear term converges by H\"older's inequality, using
$u\in L^{p+1}(\mathbb{R}^N)$ and
$\psi(y)\eta(z)\in L^{p+1}(\mathbb{R}^N)$.
Testing the equation for \eqref{crit:eq:ground-factor} against
$\psi(y)\eta(z)$ gives on the left
\begin{align*}
 \int_{\R^m}v\eta\,\mathrm{d} z\,
   \langle(H_y-d)\phi_0,\psi\rangle_{L^2_y}
 +\langle\phi_0,\psi\rangle_{L^2_y}
   \langle-\Delta_zv,\eta\rangle=0,
\end{align*}
where both terms vanish by $(H_y-d)\phi_0=0$ and
$\langle\phi_0,\psi\rangle=0$.  The right-hand side equals
$$
 \langle\phi_0^p,\psi\rangle_{L^2_y}
 \int_{\R^m}v^p\eta\,\mathrm{d} z>0
$$
by \eqref{crit:eq:strict-CS}.
This contradiction excludes equality and proves \eqref{crit:eq:pm-bound}.
\end{proof}

The critical theorem above proves Theorem \ref{thm:nec-liou}\,(iii-2).  The complementary upper obstruction proves Theorem \ref{thm:nec-liou}\,(iii-3).  Combining them with Theorem \ref{thm:our-Liouville}, when $m\geq5$ a nontrivial finite-energy pure-power solution can therefore occur, at most, in the interval
$$
 \max\left\{\frac{m}{m-2},\frac{N+2}{N-2}\right\}
 <p<\frac{m+2}{m-2}.
$$
This identifies precisely what remains open in the sharpness problem; no existence in this interval is asserted here.

\section{\texorpdfstring{Existence and regularity for $\lambda>-d$}{Existence and regularity for lambda>-d}}\label{sec:existence}

Throughout this section $\lambda>-d$ and assumptions \ref{gcon-1}--\ref{gcon-2} are in force.  We use the classical mountain-pass theorem of Ambrosetti and Rabinowitz \cite{AmbrosettiRabinowitz1973}, together with a version of the concentration--compactness nonvanishing mechanism adapted to translations only in the free variables.  By Proposition \ref{pro:20260514-1010},
$$
 \|u\|_\lambda^2
 :=\int_{\mathbb{R}^N}\bigl(|\nabla u|^2+|y|^2u^2+\lambda u^2\bigr)\,\mathrm{d}x
$$
defines a norm equivalent to the standard form norm on $X$.  In particular,
\begin{equation}\label{eq:X-H1-embedding}
 \|u\|_{H^1(\mathbb{R}^N)}\leq C_\lambda\|u\|_\lambda,
 \qquad u\in X,
\end{equation}
and hence $X\hookrightarrow L^r(\mathbb{R}^N)$ for $2\leq r\leq2^*$ when $N\geq3$, and for every finite $r\geq2$ when $N=2$.

Extend $g$ by zero on $(-\infty,0]$.  The associated functional
\begin{equation}\label{eq:energy-functional}
 I(u):=\frac12\|u\|_\lambda^2-\int_{\mathbb{R}^N}G(u^+)\,\mathrm{d}x
\end{equation}
belongs to $C^1(X,\mathbb{R})$.  From \ref{gcon-1}, for every $\varepsilon>0$ there exist $C_\varepsilon>0$ and the exponent $q\in(2,2^*)$ from \ref{gcon-1} such that
\begin{equation}\label{eq:g-growth-existence}
 |g(s)|\leq\varepsilon |s|+C_\varepsilon |s|^{q-1},
 \qquad
 0\leq G(s)\leq\varepsilon s^2+C_\varepsilon |s|^q,
 \qquad s\geq0.
\end{equation}

\begin{lemma}\label{lemma:20260521-0952}
There exist $\rho,a>0$ such that $I(u)\geq a$ whenever $\|u\|_\lambda=\rho$, and there exists $e\in X$ with $\|e\|_\lambda>\rho$ and $I(e)<0$.
\end{lemma}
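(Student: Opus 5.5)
The argument is the standard mountain-pass geometry check. It uses the norm equivalence furnished by Proposition \ref{pro:20260514-1010} (valid since $\lambda>-d$), the embedding \eqref{eq:X-H1-embedding}, and the growth bound \eqref{eq:g-growth-existence}.

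\emph{Small spheres.} Fix $\varepsilon>0$ and take $C_\varepsilon$ from \eqref{eq:g-growth-existence}. For $u\in X$ we have
$$
 \int G(u^+)\,\mathrm{d}x\leq\varepsilon\|u\|_2^2+C_\varepsilon\|u\|_q^q .
$$
By \eqref{eq:X-H1-embedding} and Sobolev, both $\|u\|_2$ and $\|u\|_q$ are bounded by $C\|u\|_\lambda$, because $2<q<2^*$. Hence
$$
 I(u)\geq\Bigl(\tfrac12-C\varepsilon\Bigr)\|u\|_\lambda^2-C_\varepsilon C\|u\|_\lambda^q .
$$
Choose $\varepsilon$ with $C\varepsilon\leq1/4$. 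Then choose $\rho>0$ so small that $C_\varepsilon C\rho^{q-2}\leq1/8$. This gives $I(u)\geq\rho^2/8=:a>0$ on $\|u\|_\lambda=\rho$.

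\emph{Negative direction.} From \ref{gcon-2}, wherever $G(s)>0$ we have $(\log G)'\geq\theta/s$. Integrating from some $s_0>0$ gives $G(s)\geq G(s_0)(s/s_0)^\theta$ for $s\geq s_0$.

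We need $s_0$ with $G(s_0)>0$. Since $g\geq0$ is continuous and $g\not\equiv0$, $G$ is nondecreasing and eventually positive, so such an $s_0$ exists.

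Fix $0\leq\varphi\in C_c^\infty(\mathbb{R}^N)$ with $\varphi\geq s_0$ on a ball $B$. Then $\varphi\in X$, and for $t\geq1$,
$$
 I(t\varphi)\leq\frac{t^2}{2}\|\varphi\|_\lambda^2-t^\theta\frac{G(s_0)}{s_0^\theta}\int_B\varphi^\theta\,\mathrm{d}x ,
$$
where we drop the nonnegative contribution of $G$ outside $B$. Since $\theta>2$, the right-hand side tends to $-\infty$. Taking $t$ large gives $e=t\varphi$ with $\|e\|_\lambda>\rho$ and $I(e)<0$.

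\emph{Main obstacle.} There is essentially none. The one point needing care is that $\|\cdot\|_\lambda$ controls the $L^2$ norm even though $\lambda$ may be negative. This is exactly inequality \eqref{eq4-1}: it gives $\|u\|_\lambda^2\geq\min\{1,(d+\lambda)/d\}\,\mathfrak h[u,u]$, which is the norm equivalence already stated.
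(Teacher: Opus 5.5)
Your proposal is correct and follows the paper's proof: the growth bound \eqref{eq:g-growth-existence} and the embedding give $I(u)\geq(\tfrac12-C\varepsilon)\|u\|_\lambda^2-C_\varepsilon\|u\|_\lambda^q$ on small spheres, and \ref{gcon-2} gives $G(s)\geq Cs^\theta$ for $s\geq s_0$, so $I(t\varphi)\to-\infty$. Your version only spells out details the paper leaves implicit: the explicit constants, the lower bound on the ball where $t\varphi\geq s_0$, and the norm-equivalence constant $\min\{1,(d+\lambda)/d\}$.
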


\begin{proof}
The first assertion follows from \eqref{eq:g-growth-existence} and the Sobolev embedding:
$$
 I(u)\geq\left(\frac12-C\varepsilon\right)\|u\|_\lambda^2-C_\varepsilon\|u\|_\lambda^q.
$$
Choose first $\varepsilon$ and then $\rho$ small.

Since $g\not\equiv0$ and $g\geq0$, there is $s_0>0$ with $G(s_0)>0$.  Assumption \ref{gcon-2} implies that $s\mapsto G(s)/s^\theta$ is nondecreasing on $(0,\infty)$ wherever $G>0$, so $G(s)\geq Cs^\theta$ for $s\geq s_0$.  Fix a nonzero $0\leq\eta\in C_c^\infty(\mathbb{R}^N)$.  Then $I(t\eta)\rightarrow-\infty$ as $t\rightarrow\infty$, and a sufficiently large multiple gives the required $e$.
\end{proof}

The only lack of compactness comes from translations in the $z$-variables.  We record the precise nonvanishing statement needed below. It is an anisotropic form of the vanishing principle underlying Lions's concentration--compactness method \cite{Lions1984}; a proof is included because only the free translations are relevant here.

\begin{lemma}\label{lem:anisotropic-lions}
Let $\{v_n\}$ be bounded in $X$.  Let $\{Q_k\}_{k\in\mathbb Z^m}$ be the unit cubes in $\mathbb{R}^m$.  If
\begin{equation}\label{eq:vanishing-cubes}
 \sup_{k\in\mathbb Z^m}
 \int_{\mathbb{R}^d\times Q_k}|v_n|^2\,\mathrm{d}x\longrightarrow0,
\end{equation}
then
\begin{equation}\label{eq:vanishing-Lr}
 v_n\longrightarrow0\quad\text{in }L^r(\mathbb{R}^N)
 \qquad\text{for every }2<r<2^*.
\end{equation}
Here $2^*=\infty$ when $N=2$.
\end{lemma}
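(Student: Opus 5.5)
The plan is to reproduce Lions's classical argument with the cubes replaced by the cylinders $\mathbb{R}^d\times Q_k$, and to compensate for their unboundedness in $y$ by a further decomposition in $y$. The boundedness in $X$ supplies the Gaussian-type tail control in $y$ through the weight $|y|^2$.

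\textbf{Step 1: reduce to $y$-bounded cubes.} Decompose $\mathbb{R}^N$ into the unit cubes $P_{j,k}:=P_j\times Q_k$, $j\in\mathbb Z^d$, $k\in\mathbb Z^m$. Fix one exponent $r_0\in(2,2^*)$ close to $2$, for instance $r_0=2+4/N$. On each cube, the Gagliardo--Nirenberg/Sobolev inequality on a unit cube (translation-invariant constant) gives
$$
 \|v\|_{L^{r_0}(P_{j,k})}^{r_0}
 \leq C\|v\|_{L^2(P_{j,k})}^{r_0-2}\|v\|_{H^1(P_{j,k})}^{2}.
$$
Summing over $(j,k)$ yields
$$
 \|v_n\|_{r_0}^{r_0}\leq C\Bigl(\sup_{j,k}\|v_n\|_{L^2(P_{j,k})}\Bigr)^{r_0-2}\|v_n\|_{H^1}^2 .
$$
Since $\|v_n\|_{L^2(P_{j,k})}\le\|v_n\|_{L^2(\mathbb{R}^d\times Q_k)}$, hypothesis \eqref{eq:vanishing-cubes} forces the supremum to $0$. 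By \eqref{eq:X-H1-embedding} the $H^1$ norms are bounded, so $v_n\to0$ in $L^{r_0}$.

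This shows the $y$-unboundedness of the cylinders is harmless: no weight is actually needed, because the cylinder mass dominates every cube mass inside it. The weight $|y|^2$ would only be required if one wanted compactness in $y$, which is not the issue here.

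\textbf{Step 2: interpolate to the full range.} For any $r\in(2,2^*)$, interpolate between $L^{r_0}$ and either $L^2$ (if $r<r_0$) or $L^{s}$ with $s\in(r,2^*)$ (if $r>r_0$). Both $L^2$ and $L^s$ are uniformly bounded by Sobolev embedding and \eqref{eq:X-H1-embedding}. The H\"older interpolation inequality then gives $\|v_n\|_r\to0$. For $N=2$ take any finite $s>r$.

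\textbf{Main obstacle.} The only delicate point is the per-cube inequality: one needs a constant independent of the cube, and the correct exponent bookkeeping. The $H^1$ factor must appear with power exactly $2$ so that the sum over cubes is controlled by $\|v_n\|_{H^1}^2$.

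Choosing $r_0=2+4/N$ gives, by Gagliardo--Nirenberg on the unit cube via an extension operator,
$$
 \|v\|_{r_0}\le C\|v\|_2^{1-\theta}\|v\|_{H^1}^{\theta},\qquad \theta=\frac{N}{N+2}.
$$
Then $r_0\theta=2$ and $r_0(1-\theta)=r_0-2$, which is exactly the required bookkeeping. I expect this step to be routine once the exponent is fixed as above.
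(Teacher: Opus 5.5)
Your proof is correct and follows essentially the paper's argument: the Lions-type local Gagliardo--Nirenberg estimate with $r_0=2+4/N$, chosen so that the $H^1$ factor carries power exactly $2$ and the local estimates can be summed, followed by interpolation, using only the $H^1$ bound and never the weight $|y|^2$. The only difference is the localization. The paper works directly on the cylinders $\mathbb{R}^d\times Q_k^*$, using an extension in the $z$-variables and finitely overlapping enlargements. You instead split into bounded unit cubes $P_j\times Q_k$ and note that each cube mass is dominated by its cylinder mass; this reduces the statement to the classical isotropic Lions lemma and needs only the standard extension from a unit cube.
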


\begin{proof}
Set $r_0:=2+4/N$.  Let $Q_k^*$ be a fixed enlargement of $Q_k$, chosen so that the family $\{Q_k^*\}$ has uniformly finite overlap.  By extending functions from $Q_k^*$ in the $z$-variables with an extension operator whose norm is independent of $k$, and then applying the standard Gagliardo--Nirenberg inequality in $\mathbb{R}^N$, one obtains
\begin{equation}\label{eq:local-GN-cubes}
 \|v\|_{L^{r_0}(\mathbb{R}^d\times Q_k)}^{r_0}
 \leq C
 \|v\|_{L^2(\mathbb{R}^d\times Q_k^*)}^{4/N}
 \|v\|_{H^1(\mathbb{R}^d\times Q_k^*)}^{2}.
\end{equation}
The constant is independent of $k$.  Summing in $k$ and using finite overlap gives
\begin{equation}\label{eq:global-cube-GN}
 \|v\|_{r_0}^{r_0}
 \leq C
 \left(\sup_k\|v\|_{L^2(\mathbb{R}^d\times Q_k)}\right)^{4/N}
 \|v\|_{H^1}^2,
\end{equation}
where replacing $Q_k^*$ by finitely many neighboring unit cubes only changes $C$.  Boundedness in $X$ and \eqref{eq:X-H1-embedding} therefore imply $v_n\rightarrow0$ in $L^{r_0}$.  Interpolation with the bounded Sobolev norms gives \eqref{eq:vanishing-Lr}.  In dimension $N=2$, one first obtains convergence in $L^4$ and then interpolates with any fixed finite Sobolev exponent larger than the desired $r$.
\end{proof}

\begin{proof}[Proof of Theorem \ref{thm:existence}]
Let
$$
 c:=\inf_{\gamma\in\Gamma}\max_{t\in[0,1]}I(\gamma(t)),
 \qquad
 \Gamma:=\{\gamma\in C([0,1],X):\gamma(0)=0,\ \gamma(1)=e\}.
$$
Lemma \ref{lemma:20260521-0952} gives $c>0$.  The mountain-pass theorem yields a Palais--Smale sequence $\{u_n\}\subset X$ such that
\begin{equation}\label{eq:PS-sequence}
 I(u_n)\rightarrow c,
 \qquad
 I'(u_n)\rightarrow0\quad\text{in }X'.
\end{equation}
Testing $I'(u_n)$ against $u_n^-=\min\{u_n,0\}$ gives
$$
 \|u_n^-\|_\lambda^2=o(1)\|u_n^-\|_\lambda,
$$
because $g(u_n^+)u_n^-=0$ almost everywhere.  Thus $u_n^-\rightarrow0$ in $X$.
Since $u_n^+u_n^-=0$ and
$\nabla u_n^+\cdot\nabla u_n^-=0$ almost everywhere,
$$
 \|u_n\|_\lambda^2
 =\|u_n^+\|_\lambda^2+\|u_n^-\|_\lambda^2,
$$
whereas the nonlinear part of $I$ is already evaluated at $u_n^+$.
Consequently,
$$
 I(u_n^+)=I(u_n)-\frac12\|u_n^-\|_\lambda^2=I(u_n)+o(1).
$$
Moreover, writing $\langle\cdot,\cdot\rangle_\lambda$ for the
bilinear form associated with $\|\cdot\|_\lambda$, for every $v\in X$,
$$
 |\langle I'(u_n^+)-I'(u_n),v\rangle|
 =|\langle u_n^+-u_n,v\rangle_\lambda|
 \leq\|u_n^-\|_\lambda\|v\|_\lambda,
$$
so $I'(u_n^+)-I'(u_n)\rightarrow0$ in $X'$.  We may therefore replace the
Palais--Smale sequence by its positive part and assume $u_n\geq0$.

The Ambrosetti--Rabinowitz inequality gives
\begin{align*}
 c+o(1)+o(1)\|u_n\|_\lambda
 &=I(u_n)-\frac1\theta\langle I'(u_n),u_n\rangle\\
 &\geq\left(\frac12-\frac1\theta\right)\|u_n\|_\lambda^2,
\end{align*}
so $\{u_n\}$ is bounded in $X$.

We next prove nonvanishing.  If \eqref{eq:vanishing-cubes} held for $u_n$, Lemma \ref{lem:anisotropic-lions} would imply $u_n\rightarrow0$ in $L^r$ for every $2<r<2^*$.
More precisely, \eqref{eq:g-growth-existence} and boundedness in $X$ give,
for every $\varepsilon>0$,
$$
 \limsup_{n\rightarrow\infty}\int g(u_n)u_n\,\mathrm d x
 \leq C\varepsilon,
 \qquad
 \limsup_{n\rightarrow\infty}\int G(u_n)\,\mathrm d x
 \leq C\varepsilon,
$$
because $u_n\rightarrow0$ in $L^q$.  Letting $\varepsilon\downarrow0$ yields
$$
 \int g(u_n)u_n\,\mathrm{d}x=o(1),
 \qquad
 \int G(u_n)\,\mathrm{d}x=o(1).
$$
The identity $\langle I'(u_n),u_n\rangle=o(1)$ would force $\|u_n\|_\lambda\rightarrow0$, and hence $I(u_n)\rightarrow0$, contradicting $c>0$.  Therefore there exist $\delta>0$ and $k_n\in\mathbb Z^m$ such that
\begin{equation}\label{eq:local-mass-positive}
 \int_{\mathbb{R}^d\times Q_{k_n}}u_n^2\,\mathrm{d}x\geq\delta.
\end{equation}
Let $z_n$ be the center of $Q_{k_n}$ and set
$$
 \widetilde u_n(y,z):=u_n(y,z+z_n).
$$
The functional and the $X$-norm are invariant under this translation.  Passing to a subsequence,
\begin{equation}\label{eq:weak-limit-existence}
 \widetilde u_n\rightharpoonup u_0\quad\text{in }X,
 \qquad
 \widetilde u_n\rightarrow u_0\quad\text{a.e. in }\mathbb{R}^N.
\end{equation}
The weak limit is nonzero.  Indeed, boundedness of the weighted term gives, uniformly in $n$,
$$
 \int_{\{|y|>R\}\times Q_0}\widetilde u_n^2\,\mathrm{d}x
 \leq R^{-2}\int_{\mathbb{R}^N}|y|^2\widetilde u_n^2\,\mathrm{d}x
 \leq CR^{-2}.
$$
Choose $R$ large so that the last term is less than $\delta/2$.  Then \eqref{eq:local-mass-positive} leaves at least $\delta/2$ mass in the bounded set $B_R^d\times Q_0$.  Rellich compactness there yields strong $L^2$ convergence, and hence $u_0\not\equiv0$.

It remains to justify passage to the nonlinear limit.  For every compact $K\Subset\mathbb{R}^N$, local Rellich compactness gives
\begin{equation}\label{eq:local-strong-existence}
 \widetilde u_n\rightarrow u_0\quad\text{in }L^r(K)
 \qquad(2\leq r<2^*).
\end{equation}
Fix $\varphi\in C_c^\infty(\mathbb{R}^N)$.  The growth estimate \eqref{eq:g-growth-existence}, together with \eqref{eq:local-strong-existence}, implies that
$\{g(\widetilde u_n)\varphi\}$ is uniformly integrable and converges almost everywhere to $g(u_0)\varphi$.  Vitali's theorem therefore gives
\begin{equation}\label{eq:nonlinear-limit-existence}
 \int g(\widetilde u_n)\varphi\,\mathrm{d}x
 \longrightarrow\int g(u_0)\varphi\,\mathrm{d}x.
\end{equation}
The linear terms pass to the limit by weak convergence.  Since translations in $z$ leave $I$ invariant, \eqref{eq:PS-sequence} and \eqref{eq:nonlinear-limit-existence} imply
$$
 \int\bigl(\nabla u_0\cdot\nabla\varphi+(|y|^2+\lambda)u_0\varphi\bigr)\,\mathrm{d}x
 =\int g(u_0)\varphi\,\mathrm{d}x.
$$
Thus $u_0$ solves \eqref{eq:main} in distributions.  Moreover, \eqref{eq:g-growth-existence} and the Sobolev embedding show that $g(u_0)$ defines a continuous functional on $X$; by density of $C_c^\infty$ in $X$, this is equivalent to $I'(u_0)=0$. For existence, no strong convergence of the entire Palais-Smale sequence is required; the nonzero weak limit already yields a critical point.

Because $u_n\geq0$, we have $u_0\geq0$, and the nonvanishing argument above gives $u_0\not\equiv0$.  We first establish boundedness and local regularity; strict positivity will then follow from the strong maximum principle with a locally bounded coefficient. From \eqref{eq:g-growth-existence},
\begin{equation}\label{eq:subsolution-existence}
 -\Delta u_0
 =-(|y|^2+\lambda)u_0+g(u_0)
 \leq C\bigl(u_0+u_0^{q-1}\bigr).
\end{equation}
For $N\geq3$, the exponent $q-1$ is strictly below $(N+2)/(N-2)$; for $N=2$ it is finite.  The Brezis--Kato iteration, in the form used by Brezis and Lieb, applies to the nonnegative subsolution \eqref{eq:subsolution-existence}; see \cite[Theorem 2.3]{BL-CMP} when $N\geq3$ and \cite[Theorem 3.2]{BL-CMP} when $N=2$.  Although those results are formulated for equations, their iteration uses nonnegative truncations and only the upper differential inequality, so the same estimates hold for \eqref{eq:subsolution-existence}.  Hence $u_0\in L^\infty(\mathbb{R}^N)$.

Since $g(s)/s$ extends to a bounded function on the compact interval $[0,\|u_0\|_\infty]$, \eqref{eq:subsolution-existence} improves to $-\Delta u_0\leq C u_0$.  The uniform local mean-value estimate for nonnegative subsolutions gives
$$
 \sup_{B_1(x_0)}u_0
 \leq C\|u_0\|_{L^2(B_2(x_0))}.
$$
The right-hand side tends to zero as $|x_0|\rightarrow\infty$ because $u_0\in L^2(\mathbb{R}^N)$.  Thus
\begin{equation}\label{eq:decay-existence}
 u_0(x)\longrightarrow0\qquad\text{as }|x|\rightarrow\infty.
\end{equation}

Finally, on every $K\Subset\mathbb{R}^N$ the right-hand side of
$-\Delta u_0=-(|y|^2+\lambda)u_0+g(u_0)$ is bounded.  Interior $W^{2,r}$ estimates for arbitrary finite $r$, followed by Sobolev embedding, yield
$u_0\in C^{1,\alpha}_{\rm loc}$ for every $\alpha\in(0,1)$.  If $g\in C^1$, then the right-hand side is locally H\"older continuous and Schauder estimates give
$u_0\in C^{2,\alpha}_{\rm loc}$ for every $\alpha\in(0,1)$.
At this stage $u_0$ is continuous and the quotient $g(u_0)/u_0$, defined
at zero by its limit $0$, is locally bounded.  Thus $u_0$ is a
nonnegative, nontrivial weak solution of a linear equation with locally
bounded zero-order coefficient.  The strong maximum principle yields
$u_0>0$ in $\mathbb{R}^N$.
This completes the proof.
\end{proof}

\section{\texorpdfstring{Symmetry and strict monotonicity for $\lambda>-d$}{Symmetry and strict monotonicity for lambda>-d}}\label{sec:symmetry}

Throughout this section we assume the hypotheses of Theorem \ref{thm:main-symmetry}.  Thus
$\lambda>-d$, $g\in C^1([0,\infty),[0,\infty))$, $g'(0)=0$, and
$u\in C^2(\mathbb{R}^N)\cap X$ is a positive solution of \eqref{eq:main} satisfying
$u(x)\rightarrow0$ as $|x|\rightarrow\infty$.

We shall repeatedly use the local strong maximum principle and Hopf boundary lemma for linear uniformly elliptic equations with locally bounded zero-order coefficient.  In the applications below the comparison function is already known to be nonnegative, and the coefficient is bounded on every bounded subdomain; the standard localization argument therefore applies without imposing a global sign condition on the zero-order term.  We refer to \cite[Theorem 7.3.3]{Chen-Li-Book}; see also \cite{GidasNiNirenberg1979}.

\subsection{Two preliminary estimates}\label{subsec:sym-prelim}

For $0<\varepsilon<1$, the scaled oscillator inequality from Section \ref{sec:Spectral} gives
\begin{equation}\label{eq:sym-scaled-gap}
 \int_{\mathbb{R}^N}\bigl((1-\varepsilon)|\nabla v|^2+|y|^2v^2\bigr)\,\mathrm{d}x
 \geq d\sqrt{1-\varepsilon}\int_{\mathbb{R}^N}v^2\,\mathrm{d}x,
 \qquad v\in X.
\end{equation}
Since $\lambda>-d$, we may and shall fix $\varepsilon_0\in(0,1)$ so small that
\begin{equation}\label{eq:sym-positive-gap}
 \gamma_0:=d\sqrt{1-\varepsilon_0}+\lambda-\varepsilon_0>0.
\end{equation}

The second estimate is a small-set control for the difference quotient of $g$.  If $M>0$, set
$$
 L_M:=\max_{0\leq s\leq M}|g'(s)|<\infty.
$$
For any $a,b\in[0,M]$ define
\begin{equation}\label{eq:sym-difference-coeff}
 c(a,b):=\int_0^1 g'\bigl(ta+(1-t)b\bigr)\,\mathrm{d}t.
\end{equation}
Then $g(a)-g(b)=c(a,b)(a-b)$, $|c(a,b)|\leq L_M$, and, because $g'(0)=0$,
\begin{equation}\label{eq:sym-tail-c-small}
 \sup_{0\leq a,b\leq\delta}|c(a,b)|\longrightarrow0
 \qquad\text{as }\delta\downarrow0.
\end{equation}
On a measurable set $E$ of finite measure, if $|c|\leq L$ and $v$ is the zero extension of an $H_0^1(E)$ function, then for $N\geq3$,
\begin{equation}\label{eq:sym-small-set-Nge3}
 \int_E |c|v^2\,\mathrm{d}x
 \leq L|E|^{2/N}\|v\|_{2^*}^2
 \leq C L|E|^{2/N}\|\nabla v\|_2^2.
\end{equation}
When $N=2$, fix any $r>2$ and use H\"older and the embedding $H^1(\mathbb{R}^2)\hookrightarrow L^r(\mathbb{R}^2)$ to obtain
\begin{equation}\label{eq:sym-small-set-N2}
 \int_E |c|v^2\,\mathrm{d}x
 \leq C_{r,L}|E|^{1-2/r}
 \bigl(\|\nabla v\|_2^2+\|v\|_2^2\bigr).
\end{equation}
Thus in every dimension considered here the contribution of a bounded coefficient on a set of sufficiently small measure can be absorbed into the left-hand side of \eqref{eq:sym-scaled-gap}.

\subsection{Symmetry in the confined variables}\label{subsec:first-d}

We first move a plane in the $y_1$ direction.  For $\mu\leq0$ let
$$
 \Sigma_\mu:=\{(y,z)\in\mathbb{R}^N:y_1<\mu\},
 \qquad T_\mu:=\partial\Sigma_\mu,
$$
and write
$$
 x^\mu=(2\mu-y_1,y_2,\ldots,y_d,z),
 \qquad u_\mu(x):=u(x^\mu),
 \qquad w_\mu:=u_\mu-u.
$$
For $x\in\Sigma_\mu$,
\begin{equation}\label{eq:sym-potential-order}
 |y|^2-|y^\mu|^2=4\mu(y_1-\mu)\geq0.
\end{equation}
Subtracting the equations for $u_\mu$ and $u$ gives
\begin{equation}\label{eq:sym-w-confined-exact}
 -\Delta w_\mu+(|y|^2+\lambda-c_\mu)w_\mu
 =\bigl(|y|^2-|y^\mu|^2\bigr)u_\mu\geq0
 \quad\text{in }\Sigma_\mu,
\end{equation}
where
\begin{equation}\label{eq:sym-cmu}
 c_\mu(x):=\int_0^1
 g'\bigl(tu_\mu(x)+(1-t)u(x)\bigr)\,\mathrm{d}t.
\end{equation}
This integral definition is valid also at points where $u_\mu=u$ and avoids any ambiguity in the difference quotient.

Let
$$
 \eta_\mu:=\max\{-w_\mu,0\}.
$$
For every fixed $\mu$, reflection is an isometry of the unweighted $H^1$ norm, and
\begin{equation}\label{eq:sym-reflected-weight}
 |y|^2\leq 2|y^\mu|^2+8\mu^2
 \qquad(x\in\Sigma_\mu).
\end{equation}
Hence $u_\mu\in H^1(\Sigma_\mu)$ and $|y|u_\mu\in L^2(\Sigma_\mu)$.  Since $w_\mu=0$ on $T_\mu$, the Lipschitz truncation $\eta_\mu=\max\{-w_\mu,0\}$ belongs to $H_0^1(\Sigma_\mu)$.  On $\{\eta_\mu>0\}$ one has $0<u_\mu<u$ and therefore $0\leq\eta_\mu=u-u_\mu\leq u$; in particular $|y|\eta_\mu\in L^2(\Sigma_\mu)$.  The standard zero-extension theorem for $H_0^1(\Sigma_\mu)$ now shows that the extension of $\eta_\mu$ by zero belongs to $H^1(\mathbb{R}^N)$, and the preceding weighted estimate shows that it actually belongs to $X$.  Equivalently, one may approximate it in the $X$-form norm by truncating away from $T_\mu$ and infinity and then mollifying.  Thus the full-space inequality \eqref{eq:sym-scaled-gap} is legitimately applicable to $\eta_\mu$.

Testing \eqref{eq:sym-w-confined-exact} with $\eta_\mu$ and recalling that $w_\mu=-\eta_\mu$ on its support yields
\begin{equation}\label{eq:sym-basic-negative}
 0\geq
 \int_{\Sigma_\mu}\bigl(|\nabla\eta_\mu|^2+(|y|^2+\lambda)\eta_\mu^2\bigr)\,\mathrm{d}x
 -\int_{\{w_\mu<0\}}c_\mu\eta_\mu^2\,\mathrm{d}x.
\end{equation}
Only the positive part of $c_\mu$ matters in the last term.

\medskip
\noindent\emph{Starting the plane.}
On $\{w_\mu<0\}$ one has $0<u_\mu<u$.  Since $u(x)\rightarrow0$ at infinity, for every $\delta>0$ there is $R>0$ such that $u(x)\leq\delta$ on $|x|\geq R$.  If $\mu<-R$, then every point of $\Sigma_\mu$ lies outside $B_R$; hence \eqref{eq:sym-tail-c-small} gives
$$
 \sup_{\{w_\mu<0\}}(c_\mu)^+\leq o_R(1).
$$
Combining \eqref{eq:sym-basic-negative} with the unscaled oscillator bound
$\int(|\nabla v|^2+|y|^2v^2)\geq d\int v^2$ gives, for $R$ sufficiently large,
$$
 0\geq\bigl(d+\lambda-o_R(1)\bigr)\|\eta_\mu\|_2^2.
$$
Because $d+\lambda>0$, this forces $\eta_\mu=0$.  Thus
\begin{equation}\label{eq:sym-start-confined}
 w_\mu\geq0\quad\text{in }\Sigma_\mu
 \qquad\text{for all sufficiently negative }\mu.
\end{equation}

Define
\begin{equation}\label{eq:sym-mu0-confined}
 \mu_0:=\sup\{\bar\mu\leq0:
 w_\mu\geq0\text{ in }\Sigma_\mu\text{ for every }\mu\leq\bar\mu\}.
\end{equation}
We claim that $\mu_0=0$.  By continuity $w_{\mu_0}\geq0$.  If $\mu_0<0$, then $w_{\mu_0}\not\equiv0$: otherwise, subtracting the two exact equations would give
$\bigl(|y|^2-|y^{\mu_0}|^2\bigr)u=0$ in $\Sigma_{\mu_0}$, contradicting \eqref{eq:sym-potential-order} and $u>0$.  Hence the strong maximum principle gives
\begin{equation}\label{eq:sym-strict-mu0}
 w_{\mu_0}>0\quad\text{in }\Sigma_{\mu_0}.
\end{equation}

We now show that the plane can be moved slightly beyond $\mu_0$.  Choose $R$ so large that, on $B_R^c\cap\{w_\mu<0\}$ and for $\mu$ near $\mu_0$, \eqref{eq:sym-tail-c-small} implies
\begin{equation}\label{eq:sym-tail-absorb}
 (c_\mu)^+\leq\varepsilon_0.
\end{equation}
On the bounded part $B_R$, uniform convergence $w_\mu\rightarrow w_{\mu_0}$ as $\mu\downarrow\mu_0$ and \eqref{eq:sym-strict-mu0} imply
\begin{equation}\label{eq:sym-negative-set-small}
 \bigl|B_R\cap\Sigma_\mu\cap\{w_\mu<0\}\bigr|\longrightarrow0
 \qquad\text{as }\mu\downarrow\mu_0.
\end{equation}
Since $u$ is bounded on the relevant compact set, $|c_\mu|$ is uniformly bounded there.  Applying \eqref{eq:sym-small-set-Nge3} or \eqref{eq:sym-small-set-N2}, and taking $\mu-\mu_0>0$ sufficiently small, we obtain
\begin{equation}\label{eq:sym-c-absorb}
 \int_{\{w_\mu<0\}}(c_\mu)^+\eta_\mu^2\,\mathrm{d}x
 \leq \varepsilon_0\bigl(\|\nabla\eta_\mu\|_2^2+\|\eta_\mu\|_2^2\bigr).
\end{equation}
Putting this into \eqref{eq:sym-basic-negative} and using \eqref{eq:sym-scaled-gap} gives
$$
 0\geq\bigl(d\sqrt{1-\varepsilon_0}+\lambda-\varepsilon_0\bigr)
 \|\eta_\mu\|_2^2
 =\gamma_0\|\eta_\mu\|_2^2.
$$
Thus $w_\mu\geq0$ slightly beyond $\mu_0$, contradicting its definition.  Therefore $\mu_0=0$.

Applying the same argument to $u(-y_1,y',z)$ gives the reverse inequality at $T_0$, hence
\begin{equation}\label{eq:sym-even-y1}
 u(-y_1,y',z)=u(y_1,y',z).
\end{equation}
Rotating the $y$ coordinates and repeating the argument shows invariance under reflection in every hyperplane through the origin in $\mathbb{R}^d$.  Therefore
\begin{equation}\label{eq:sym-y-radial}
 u(y,z)=V(|y|,z).
\end{equation}

The monotonicity is strict.  For each $\mu<0$, $w_\mu\not\equiv0$ by the strict inequality in \eqref{eq:sym-potential-order}; hence $w_\mu>0$ in $\Sigma_\mu$ and the Hopf lemma applies on $T_\mu$.  With the outward normal $\nu=e_1$,
$$
 \partial_\nu w_\mu=-2\partial_{y_1}u<0
 \quad\text{on }T_\mu,
$$
so $\partial_{y_1}u>0$ on $y_1=\mu<0$.  By evenness this is equivalent to
\begin{equation}\label{eq:sym-y-strict}
 \partial_r V(r,z)<0\qquad(r>0).
\end{equation}

\subsection{Symmetry in the free variables and the common center}\label{subsec:last-N-d}

Fix a unit vector $e\in\mathbb{R}^m$.  After an orthogonal change of the $z$ variables we may suppose $e=e_1$ and write $z=(z_1,z')$.  For $\mu\in\mathbb{R}$ set
$$
 \Sigma_\mu^z:=\{(y,z):z_1<\mu\},
 \qquad z^\mu=(2\mu-z_1,z'),
 \qquad u_\mu(y,z):=u(y,z^\mu),
 \qquad w_\mu:=u_\mu-u.
$$
Now the potential $|y|^2$ is invariant under the reflection, so the exact difference equation is
\begin{equation}\label{eq:sym-w-free}
 -\Delta w_\mu+(|y|^2+\lambda-c_\mu)w_\mu=0
 \quad\text{in }\Sigma_\mu^z,
 \qquad w_\mu=0\quad\text{on }\partial\Sigma_\mu^z,
\end{equation}
with $c_\mu$ again given by \eqref{eq:sym-cmu}.

For the negative part $\eta_\mu=\max\{-w_\mu,0\}$, the zero extension belongs to $X$ immediately because reflection in $z$ leaves both $|y|$ and the $X$-norm unchanged.  Thus the same testing identity \eqref{eq:sym-basic-negative} holds.  Since $u$ decays at infinity, the starting argument based on \eqref{eq:sym-tail-c-small} shows
\begin{equation}\label{eq:sym-start-free}
 w_\mu\geq0\quad\text{in }\Sigma_\mu^z
 \qquad\text{for all sufficiently negative }\mu.
\end{equation}
Let
\begin{equation}\label{eq:sym-mu0-free}
 \mu_*(e):=\sup\{\bar\mu\in\mathbb{R}:
 w_\mu\geq0\text{ in }\Sigma_\mu^z\text{ for all }\mu\leq\bar\mu\}.
\end{equation}
The number $\mu_*(e)$ is finite.  Indeed, if the inequality held for every $\mu$, then for every fixed $(y,z')$ the positive function $z_1\mapsto u(y,z_1,z')$ would be nondecreasing on all of $\mathbb{R}$, which is incompatible with $u(y,z_1,z')\rightarrow0$ as $|z_1|\rightarrow\infty$.

At the limiting position $w_{\mu_*(e)}\geq0$.  If it were not identically zero, the strong maximum principle would give $w_{\mu_*(e)}>0$.  The same compact-small-negative-set argument \eqref{eq:sym-negative-set-small}--\eqref{eq:sym-c-absorb}, now with no potential-reflection term, would then permit the plane to move a little farther, contradicting the definition of $\mu_*(e)$.  Hence
\begin{equation}\label{eq:sym-free-plane}
 u(y,z)=u\bigl(y,z-2((z\cdot e)-\mu_*(e))e\bigr),
\end{equation}
that is, $u$ is symmetric with respect to the hyperplane
$$
 P_e:=\{z\in\mathbb{R}^m:z\cdot e=\mu_*(e)\}.
$$
Moreover, the comparison is strict at every position $\mu<\mu_*(e)$.  Indeed,
if $w_\mu\equiv0$ for some such $\mu$, choose
$\nu\in(\mu,\mu_*(e))$ and write, on a line parallel to $e$,
$f(t)=u(y,z'+te)$.  Symmetry about $t=\mu$ and the inequality associated
with the plane $t=\nu$ imply, for every $t\geq\mu$,
$$
 f\bigl(t+2(\nu-\mu)\bigr)\geq f(t).
$$
Iteration contradicts $f(t)\rightarrow0$ as $t\rightarrow+\infty$, since $f(t)>0$.
Thus $w_\mu\not\equiv0$, and the strong maximum principle gives
$w_\mu>0$ before the limiting position.  The Hopf lemma therefore gives
strict monotonicity along every line orthogonal to $P_e$, away from the
plane.

It remains to prove that the hyperplanes $P_e$ have a common center.  Apply the preceding construction to an orthonormal basis $e_1,\ldots,e_m$ and put
\begin{equation}\label{eq:sym-z0}
 z_0:=\sum_{j=1}^m\mu_*(e_j)e_j.
\end{equation}
For each fixed $y$, strict monotonicity in the $e_j$ direction implies successively that
\begin{equation}\label{eq:sym-unique-z-max}
 u(y,z)<u(y,z_0)\qquad\text{whenever }z\ne z_0.
\end{equation}
Thus $z_0$ is the unique maximum point of the map $z\mapsto u(y,z)$, for every fixed $y$.

Now take an arbitrary unit vector $e$.  Reflection in $P_e$ preserves $u$.  If $z_0\notin P_e$, its reflection $z_0^e$ across $P_e$ would satisfy $z_0^e\ne z_0$ and
$$
 u(y,z_0^e)=u(y,z_0)
 \qquad\text{for every }y,
$$
contradicting \eqref{eq:sym-unique-z-max}.  Hence $z_0\in P_e$ for every $e$, or equivalently
\begin{equation}\label{eq:sym-plane-through-center}
 \mu_*(e)=z_0\cdot e\qquad(e\in\mathbb S^{m-1}).
\end{equation}
Therefore $u$ is invariant under reflection in every hyperplane of $\mathbb{R}^m$ passing through $z_0$.  These reflections generate the full orthogonal group about $z_0$, and we conclude that
\begin{equation}\label{eq:sym-z-radial}
 u(y,z)=U(|y|,|z-z_0|).
\end{equation}
The strict one-dimensional monotonicity obtained above along every direction through $z_0$ yields
\begin{equation}\label{eq:sym-z-strict}
 \partial_sU(r,s)<0\qquad(s>0).
\end{equation}
Together with \eqref{eq:sym-y-strict}, this proves Theorem \ref{thm:main-symmetry}.

\subsection*{Data availability}
No data was used for the research described in the article.

\subsection*{Acknowledgments}
Z. Chen was supported by National Key R\&D Program of China (No.~2023YFA1010002) and NSFC (No.~12222109). J. Zhang was supported by the NSFC (No.~12371109). X. Zhong was supported by the NSFC (No.~12271184), Young Top-notch Talent Project of Guangdong Province (No.~2024TQ08A725), and Guangzhou Basic and Applied Basic Research Foundation (No.~2024A04J10001).

\end{document}